\DeclareMathOperator{\ee}{\mathbf{e}}
\DeclareMathOperator{\xx}{\mathbf{x}}
\DeclareMathOperator{\zz}{\mathbf{z}}
\DeclareMathOperator{\uu}{\mathbf{u}}
\DeclareMathOperator{\vv}{\mathbf{v}}
\newtheorem{thm}{Theorem}[section]
\newtheorem*{thm*}{Theorem}
\newtheorem{prop}[thm]{Proposition}
\newtheorem*{prop*}{Proposition}
\newtheorem*{cor*}{Corollary}
\newtheorem{lem}[thm]{Lemma}
\newtheorem*{lem*}{Lemma}
\newtheorem*{oquest*}{Open Question}
\theoremstyle{remark}
\newtheorem{rmk}[thm]{Remark}
\theoremstyle{remark}
\newtheorem*{rmk*}{Remark}
\theoremstyle{definition}
\newtheorem{defn}[thm]{Definition}
\theoremstyle{definition}
\newtheorem{notat}[thm]{Notation}
\theoremstyle{definition}
\theoremstyle{definition}
\theoremstyle{definition}
\newtheorem*{defn*}{Definition}
\theoremstyle{definition}
\theoremstyle{definition}
\theoremstyle{definition}
\numberwithin{equation}{section}%numbers equations by section
\newcommand{\C}{\mathbb{C}}
\newcommand{\Z}{\mathbb{Z}}
\newcommand{\QQ}{\mathbb{Q}}
\newcommand{\R}{\mathbb{R}}
\newcommand{\msW}{\mathscr{W}}
\newcommand{\mcH}{\mathcal{H}}
\newcommand{\Gal}{\textup{Gal}}
\newcommand{\Hom}{\textup{Hom}}
\newcommand{\isoarrow}{\xrightarrow{\,\,\,\sim\,\,\,}}
\newcommand{\mfb}{\mathfrak{b}}
\newcommand{\mfg}{\mathfrak{g}}
\newcommand{\mfh}{\mathfrak{h}}
\DeclareFontFamily{U}{wncy}{}
\DeclareFontShape{U}{wncy}{m}{n}{<->wncyr10}{}
\DeclareSymbolFont{mcy}{U}{wncy}{m}{n}
\DeclareMathSymbol{\Sha}{\mathord}{mcy}{"58}
\newcommand\restr[2]{{% we make the whole thing an ordinary symbol
  \left.\kern-\nulldelimiterspace % automatically resize the bar with \right
  #1 % the function
  \vphantom{\big|} % pretend it's a little taller at normal size
  \right|_{#2} % this is the delimiter
  }}
\newcommand{\mcC}{\mathcal{C}}
\newcommand{\SL}{\textup{SL}}
\newcommand{\slV}{\mathfrak{sl}(V)}
\renewcommand{\sl}{\mathfrak{sl}}
\newcommand{\so}{\mathfrak{so}}
\newcommand{\mft}{\mathfrak{t}}
\newcommand{\mfn}{\mathfrak{n}}
\newcommand{\mfO}{\mathfrak{O}}
\newcommand{\mfs}{\mathfrak{s}}
\newcommand{\Img}{\textup{Im}}
\newcommand{\Q}{\mathbb{Q}}
\newcommand{\vol}{\textup{vol}}
\newcommand{\mcM}{\mathcal{M}}
\newcommand{\dist}{\textup{dist}}
\begin{document}
\title[Lattice points in thickened parabolas]{Lattice points in thickened parabolas and rational points near hypersurfaces}

\author{Alexander Smith}
\email{asmith@northwestern.edu}
\date{\today}

\maketitle

\begin{abstract}
Among the nondegenerate $C^4$ hypersurfaces $\mcM$ in $\R^n$, we characterize the rational quadrics as the hypersurfaces that are the least well approximated by rational points. Given $\mcM$ other than a rational quadric, we prove a heuristically sharp lower bound for the number of rational points very near $\mcM$, improving the sensitivity of prior results of Beresnevich and Huang.

Our methods are dynamical, and rely on an application of Ratner's theorems to $1$-parameter unipotent subgroups $\{u_t \,:\, t \in \R\}$ of $\SL_n(\R)$ such that $u_1 - \text{Id}$ has rank $2$. As part of our work, we study the algebraic subgroups of $\SL_n(\Q)$ whose collection of real points can contain such a subgroup.
\end{abstract}

\section{Introduction}
\subsection{Rational points near hypersurfaces}
The basic problem we start with is to count the number of rational points near a manifold in $\R^n$. This is a problem that has seen a tremendous amount of progress over the last twenty years, starting with lower-bound work by Beresnevich, Dickinson, and Velani \cite{Bere07} for planar curves and Beresnevich \cite{Bere12} for arbitrary submanifolds of $\R^n$. These authors proved their estimates by applying results from homogeneous dynamics, which will also be the approach of this paper.

The complementary upper bound for planar curves was found by Vaughan and Velani \cite{Vaughan06}, who refined an approach using exponential sums due to Huxley \cite{Huxley96}. This approach was extended to give asymptotics for rational points near hypersurfaces by Huang \cite{Huan20}, and then for higher codimension manifolds by Schindler and Yamagishi \cite{Schind22} and Srivastava \cite{Sriv25}. For manifolds other than hypersurfaces obeying certain curvature conditions, these results produce rational points nearer to the manifold than Beresnevich's work.  Our goal is to do the same for hypersurfaces.

For us, a hypersurface $\mcM$ will be a compact connected  codimension-$1$ submanifold of $\R^n$ with boundary for some $n \ge 2$. For $\epsilon > 0$, we define the thickened hypersurface
\[\mathcal{M}_{\epsilon} := \left\{x \in \R^n\,:\,\, \text{dist}(x, \mathcal{M}) \le \epsilon\right\}\]
and, for $Q > 1$, we  define the set of rational points
\[X(\QQ^n, Q) := \{\xx/q \in \QQ^n \,:\,\, (q, \xx) \in \Z^{n+1}\,\text{ and }\, 1 \le q \le Q\}.\]
We are interested in finding good lower bounds for the sizes of the intersections
\[X(\QQ^n, Q) \cap \mcM_{\epsilon}\]
for $\epsilon$ as small as we can manage. We are following in the footsteps of Beresnevich \cite{Bere12}, who showed that, if $\mcM$ is an analytic nondegenerate hypersurface, then there are positive parameters $C, c$ depending on $\mcM$ such that, for all sufficiently large $Q$,
\[\#\big(X(\QQ^n, Q) \cap \mcM_{\epsilon}\big) \ge c Q^{n-1} \quad\text{for}\quad \epsilon = CQ^{-2}.\]
Beresnevich notes that his result cannot hold for substantially smaller $\epsilon$. After all, if $\mcM$ is a rational quadric hypersurface containing no rational points, then there is some positive $c > 0$ such that $X(\QQ^n, Q)$ does not meet $\mcM_{c Q^{-2}}$ for any $Q >1$. For example, if $\mcM$ is the circle of radius $\sqrt{3}$ centered at the origin in $\R^2$, then
\[X(\QQ^n, Q) \cap \mcM_{c Q^{-2}} = \emptyset \quad\text{for all }\, c < 1 \,\text{ and }\, Q > 1.\]
However, if we exempt hyperplanes and rational quadric hypersurfaces, we can improve on Beresnevich's result.
\begin{thm}
\label{thm:hypersurface_qual}
Take $\mcM$ to be a $C^4$ hypersurface in $\R^n$. We assume that $\mcM$ is not contained in any hyperplane. 

Given $Q > 1$, define $\delta_{\mcM}(Q)$ to be the minimal $\delta > 0$ such that
\[X(\Q^n, Q) \cap \left(\mcM_{\epsilon} \backslash \mcM \right) \,\ne\, \emptyset \quad\text{for }\, \epsilon = \delta Q^{-2}.\]

Then exactly one of the following holds:
\begin{enumerate}
\item There is a nonzero integral quadratic polynomial on $\R^{n}$ that vanishes on $\mcM$.
\item We have
\[\limsup_{Q \to \infty} \delta_{\mcM}(Q) = 0.\]
\end{enumerate}
\end{thm}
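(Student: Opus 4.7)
The plan is to argue by contrapositive: assume (2) fails and deduce (1). So suppose there exist $c_0 > 0$ and a sequence of heights $Q_k \to \infty$ for which no rational point $\xx/q$ with $q \le Q_k$ and $\xx/q \notin \mcM$ lies within distance $c_0 Q_k^{-2}$ of $\mcM$. I would first install the standard dynamical correspondence between rational approximations and lattice vectors: working in a local chart in which $\mcM$ is the graph $\{(p, f(p)) \,:\, p \in U\}$ of a $C^4$ function $f$ on an open set $U \subset \R^{n-1}$, for each base point $p \in U$ and height $Q > 1$ one builds a matrix $g_{p,Q} \in \SL_{n+1}(\R)$ sending a lattice vector $(q, \xx, r) \in \Z^{n+1}$ to approximately
\[\bigl(q/Q,\; \xx - q p,\; Q(r - q f(p) - \nabla f(p) \cdot (\xx - qp))\bigr).\]
Rational points $\xx/q$ near $p$ with $q \le Q$ and $\dist(\xx/q, \mcM) \le \delta Q^{-2}$ then correspond to bounded-norm vectors in the lattice $\Lambda_{p,Q} := g_{p,Q}\Z^{n+1}$ whose final coordinate is controlled by $\delta$. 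The failure of (2) thus says that, for every $p$ in some open subset of $U$, the lattice $\Lambda_{p, Q_k}$ avoids a small fixed cylinder around the origin.

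Next I would introduce a $1$-parameter unipotent subgroup $\{u_t \,:\, t \in \R\}$ of $\SL_{n+1}(\R)$ with $u_1 - \text{Id}$ of rank $2$. Varying $p$ along a carefully chosen curve in $U$ while simultaneously varying $Q$ gives a two-parameter deformation of $\Lambda_{p,Q}$ which, after a change of basis tailored to the second-order behaviour of $f$, is realised by the action of such a $\{u_t\}$; the non-escape of short vectors from the previous step becomes a non-divergence statement for the orbits $\{u_t \Lambda_{p_0, Q_0}\}$ in $\SL_{n+1}(\R)/\SL_{n+1}(\Z)$. Ratner's orbit closure theorem, together with the linearization technique of Dani--Margulis--Shah, then forces these orbits to lie in closed orbits of proper algebraic $\Q$-subgroups $H \subset \SL_{n+1}$ whose real points contain $\{u_t\}$.

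The heart of the proof, and the main obstacle, is the algebraic classification announced in the abstract: showing that the only such $H$ compatible with the geometric constraints coming from $\mcM$ are essentially stabilisers of quadratic forms on $\Q^{n+1}$. The rank-$2$ hypothesis is crucial here, as it restricts the candidate algebraic $\Q$-subgroups more severely than weaker unipotent setups would and is what yields the improvement over prior lower bounds. Once such an $H$ is identified, pulling its invariant quadratic form back through $g_{p,Q}$ produces a nonzero integral quadratic polynomial vanishing on an open piece of $\mcM$, and a $C^4$ unique-continuation argument using the connectedness of $\mcM$ propagates the vanishing to all of $\mcM$, giving (1). The $\Q$-rationality of $H$, rather than mere $\R$-algebraicity, is what delivers an integral (rather than just real) quadratic polynomial, and is where Ratner's rigidity over the rationals is used most directly.
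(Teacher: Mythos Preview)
Your outline gets the dynamical machinery right in spirit --- rank-$2$ unipotents, Ratner/Dani--Margulis, and the classification of rational subgroups that can contain them --- but the endgame has a genuine gap.

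The problematic sentence is: ``pulling its invariant quadratic form back through $g_{p,Q}$ produces a nonzero integral quadratic polynomial vanishing on an open piece of $\mcM$.'' In the correct construction (which the paper carries out), the unipotent attached to a base point $p$ is built from the \emph{osculating quadric} at $p$, and every quadratic form it preserves vanishes at the single projective point $(1,p,f(p))$ --- not on any open piece of $\mcM$. So if, for a given $p$, the Ratner hull is a proper $\Q$-subgroup preserving a rational form $P_p$, you learn only that $p \in \mcM_{\textup{quad}}$. Running this over all non-flat $p$ shows at best $\mcM \subseteq \mcM_{\textup{flat}} \cup \mcM_{\textup{quad}}$, with the form $P_p$ allowed to vary from point to point. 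Your ``unique continuation'' gesture does not bridge this: a $C^4$ hypersurface can have every point lying on \emph{some} rational quadric while no single rational quadric contains it, because the quadrics through different points need not coincide and may each meet $\mcM$ only in a lower-dimensional set.

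This residual case is exactly where the paper's proof does its real work, and it needs an idea beyond the pointwise classification. The paper first invokes its counting theorem (Theorem~\ref{thm:main}) to reduce to $\mcM = \mcM_{\textup{flat}} \cup \mcM_{\textup{quad}}$, then chooses a non-flat point $x$ such that no finite collection of rational quadrics covers a neighborhood of $x$, picks $x_i \to x$ with $x_i$ avoiding the first $i$ rational quadrics, forms the associated invariant measures $\mu_i$ on $\SL_{n+1}(\R)/\SL_{n+1}(\Z)$, and passes to a weak${}^*$ limit $\mu$. By Mozes--Shah, $\mu$ is invariant under a group whose Lie algebra $\mfh$ contains infinitely many of the $w_{x_i}$; a real-algebraic-geometry argument on the rank-$\le 2$ nilpotent variety inside $\mfh$ then produces a single $w$ with $w^2\ne 0$ preserving \emph{no} rational quadratic form, and the thickened-parabola estimate applied to this limiting $w$ manufactures rational points near $x$ after all. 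Nothing in your sketch substitutes for this limiting argument.
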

In the case $n = 2$, this answers a question posed by Beresnevich and Kleinbock  \cite[Problem 5]{KleBer19}. 

This theorem is ineffective; indeed, by taking $\mcM$ to be an irrational quadric hypersurface very well approximated by rational quadric hypersurfaces, we may force $\delta_{\mcM}$ to tend to $0$ arbitrarily slowly. However, for planar curves other than conics, we do get an effective statement.

\begin{thm}
\label{thm:planarcurves}
There is an absolute $\kappa > 0$ so we have the following:

Take $\mcM$ to be a $C^5$ curve in $\R^2$. We assume that there is no nonzero quadratic polynomial that vanishes on $\mcM$.

Then, for all sufficiently large $Q$,
\[X(\Q^n, Q) \cap \left(\mcM_{\epsilon} \backslash \mcM \right) \ne \emptyset \quad\text{for }\, \epsilon = Q^{-2 - \kappa}.\]
\end{thm}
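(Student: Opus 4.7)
The plan is to make effective, in the case $n = 2$, the dynamical argument that underlies Theorem~\ref{thm:hypersurface_qual}. By compactness of $\mcM$ and the hypothesis that no nonzero quadratic polynomial vanishes on $\mcM$, it suffices to work on a small arc on which $\mcM$ has nonzero curvature and is not locally part of any conic. After an affine change of coordinates I parametrize this arc as the graph $y = f(x)$ with $f(0) = f'(0) = 0$ and $f''(0) \neq 0$; the non-conic hypothesis produces a nonzero differential-polynomial invariant of $f$ in its derivatives up to order $5$, which is bounded away from zero on a sub-arc.

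For parameters $a > 0$ and $Q > 1$ to be chosen, I consider the standard dynamical setup on $\SL_3(\Z) \backslash \SL_3(\R)$: let
\[g_Q := \textup{diag}(Q^a, Q^a, Q^{-2a}) \in \SL_3(\R), \qquad u(x) := \begin{pmatrix} 1 & 0 & -x \\ 0 & 1 & -f(x) \\ 0 & 0 & 1 \end{pmatrix}.\]
With $a$ tuned appropriately, a nonzero vector of sup-norm at most $Q^{-\kappa}$ in the lattice $g_Q u(x) \Z^3$ corresponds to a rational point $(p/q, r/q) \in X(\Q^2, Q)$ with $|p/q - x|, |r/q - f(x)| \le Q^{-2-\kappa}$. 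Thus the theorem reduces to producing such a short vector for some $x$ in the arc.

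The core mechanism is effective equidistribution for a one-parameter unipotent subgroup of the kind highlighted in the abstract. Writing $f(x) = cx^2 + O(x^3)$, the osculating parabola $y = cx^2$ arises, after a change of basis, as the orbit of a fixed vector under a one-parameter unipotent subgroup $\{u_t^{\textup{osc}}\}_{t \in \R} \subset \SL_3(\R)$ whose infinitesimal generator is a \emph{regular} nilpotent of $\sl_3(\R)$, so that $u_1^{\textup{osc}} - \textup{Id}$ has rank $2$. After conjugation by $g_Q$, the trajectory of $\{u_t^{\textup{osc}}\}$ through the relevant base point traces out a long arc in $\SL_3(\Z) \backslash \SL_3(\R)$ as $x$ varies over the parametrizing interval. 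Quantitative equidistribution for such regular unipotent orbits---a form of effective Ratner theorem for $\SL_3(\Z) \backslash \SL_3(\R)$---combined with the paper's classification of algebraic $\Q$-subgroups of $\SL_3$ whose real points contain $\{u_t^{\textup{osc}}\}$, yields a power-saving rate $Q^{-\eta}$ for integrals along the trajectory, provided the base point is not trapped in a proper periodic subvariety. The ``no nonzero rational quadric'' hypothesis on $\mcM$ is exactly what rules out the obstructing quadric-associated periodic trajectories at the relevant scale.

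To pass from the osculating parabola to the actual curve, I write $u(x) = u_x^{\textup{osc}} \cdot r(x)$ with $r(x) = \textup{Id} + O(|x|^3)$; on the interval over which the dynamics is run, the conjugated correction $g_Q (r(x) - \textup{Id}) g_Q^{-1}$ is a Lipschitz perturbation of size a fixed negative power of $Q$. Combining the effective rate $\eta$ with Siegel's integration formula---which assigns measure $\sim R^3$ to the set of unimodular lattices in $\R^3$ containing a nonzero vector of norm $\le R$---forces the existence of some $x$ in the arc at which $g_Q u(x) \Z^3$ contains a nonzero vector of norm $\le Q^{-\kappa}$, producing the desired rational point. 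The main obstacle is the effective equidistribution rate: classical Ratner theory is ineffective, so one must use recent quantitative equidistribution theorems for regular one-parameter unipotent flows on $\SL_3(\Z) \backslash \SL_3(\R)$, coupled with the paper's structural classification of obstructing algebraic subgroups, to produce a uniform power saving $\kappa > 0$ depending only on the $C^5$-norm of $f$ and a lower bound for the non-conic invariant.
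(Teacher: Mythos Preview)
Your strategy—effective equidistribution for a regular unipotent on $\SL_3(\Z)\backslash\SL_3(\R)$—is the paper's, but two steps are missing or wrong.

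First, you conclude with $\kappa$ ``depending only on the $C^5$-norm of $f$ and a lower bound for the non-conic invariant,'' which is not the absolute $\kappa$ the theorem claims. The paper separates these roles: the effective input from \cite{Lind25eff} (packaged as Theorem~\ref{thm:R3_thickened}) yields an \emph{absolute} exponent once the generator $w$ lies in $\msW(C,T_0)$, i.e.\ once the osculating conic is polynomially far from every rational conic of height $\le T_0$; only $T_0$, hence $Q_0$, depends on $\mcM$. Showing that such a $T_0$ exists for a set of points of positive arclength is Lemma~\ref{lem:Qhugger}: the Veronese image $\phi(\mcM_0)\subset\R^5$ is nondegenerate precisely where the osculating conic has contact order exactly four, and Kleinbock--Margulis \cite{Klein98} then forces almost every osculating conic to be Diophantine-generic among all conics. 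Your ``non-conic invariant bounded below'' is exactly this nondegeneracy, but you never bridge from it to a Diophantine condition on the osculating conic relative to \emph{rational} conics; without that bridge you cannot exclude curves whose osculating conics are all extremely well approximated by rational ones, and your $\kappa$ is not absolute.

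Second, the osculating \emph{parabola} $y=cx^2$ is one order too crude. On an arc of length $\ell$ the remainder $r(x)-\text{Id}=O(\ell^3)$ becomes, after the rescaling that makes the unipotent orbit long (the paper's patches have tangential width $\sim Q^{-1/2}$), of size $\sim Q^{1/2}$ against the target precision $Q^{-2-\kappa}$. The paper instead constructs, at each base point, a quadratic form $R$ on $\R^3$ matching the cone over $\mcM$ to \emph{third} order in the tangential variable (the condition $R^{\circ}_{111}(0)=0$ in Lemma~\ref{lem:parallel_patch}), so the error is $O(u_1^4)=O(Q^{-2})$ on a patch of width $Q^{-1/2}$. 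The unipotent you need is the one preserving this $R$—essentially the osculating \emph{conic}, not the osculating parabola—and its Diophantine position among rational conics is exactly what the first gap concerns. Once both points are addressed, the paper's route is to count primitive lattice points directly in the thickened parabolas $U_{[T,\beta T]}B$ (Theorem~\ref{thm:R3_thickened}) rather than to hunt for a single short vector via the Dani correspondence and Siegel; this gives the volume-sharp lower bound of Theorem~\ref{thm:main_eff}, from which Theorem~\ref{thm:planarcurves} follows by checking that $\mcM_0$ is non-negligible.
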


We will also give heuristically sharp lower bounds for the number of rational points near a given hypersurface $\mcM$. Here, our heuristic is the volume-based one also considered by Beresnevich \cite{Bere12} and Huang \cite{Huan20}. Note that the primitive integer points in the cone
\[\mcC_Q(\mcM_{\epsilon}) :=  \left\{(q, \xx) \in \R^{n+1}\,:\,\, 0 < q \le Q \text{ and } \xx/q \in \mathcal{M}_{\epsilon}\right\}\]
 are in bijection with the points in $X(\QQ^n, Q) \cap \mcM_{\epsilon}$. The number of primitive integer points in a large ball in $\R^{n+1}$ is roughly $\zeta(n + 1)^{-1}$ times the volume of the ball, so we might heuristically predict
\begin{equation}
\label{eq:heuristic}
\#\big(X(\QQ^n, Q) \cap \mcM_{\epsilon}\big) \sim \zeta(n+1)^{-1}  \, \vol\, \mcC_Q(\mcM_{\epsilon})  = \frac{2 \epsilon Q^{n+1}}{(n+1) \zeta(n+1)} \textup{vol}\, \mathcal{M},
\end{equation}
where $\vol\, \mcM$ is defined in terms of the usual Riemannian metric on the submanifold $\mcM$. For $\epsilon$ of the form $Q^{-2 + \kappa}$ with $\kappa > 0$, this heuristic was proved to be true for any hypersurface satisfying a certain curvature condition by Huang \cite[Theorem 3]{Huan20}.

The same rational quadrics as before show that Huang's result does not hold for general $\mcM$ if $\epsilon$ is on the order of $Q^{-2}$. But if we exempt the rational quadric hypersurfaces, we can prove Huang's lower bound for $\epsilon$ this small.

\begin{thm}
\label{thm:main}
Take $\mathcal{M}$ to be a $C^4$ hypersurface in $\R^n$. Take $\mathcal{M}_{\textup{flat}}$ to be the set of $x$ in $\mathcal{M}$ such that the curvature tensor of $\mathcal{M}$ is $0$ at $x$, and take $\mathcal{M}_{\textup{quad}}$ to be the set of $x$ in $\mathcal{M}$ such that some nonzero rational quadratic polynomial $P: \R^n \to \R$ vanishes at $x$. Then, for any $\delta > 0$, there is some $Q_0 > 0$ so that, for any $Q > Q_0$ and $\epsilon$ in $\left[\delta Q^{-2}, Q^{-1}\right]$,
\[\#\big(X(\QQ^n, Q) \,\cap\, \mathcal{M}_{\epsilon}\big) \,\ge\, (1 - \delta)\frac{2 \epsilon Q^{n+1}}{(n+1) \zeta(n+1)} \textup{vol}\, \mathcal{M} \backslash (\mathcal{M}_{\textup{flat}} \cup \mathcal{M}_{\textup{quad}}) .\]
\end{thm}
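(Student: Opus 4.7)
The plan is to combine a local parametrization of $\mathcal{M}$ with Dani-correspondence dynamics on the space of unimodular lattices in $\R^{n+1}$. Following Beresnevich, counting rational points near a patch of $\mathcal{M}$ reduces to an orbit average of the form $\int_0^T F(u_t \Lambda_Q)\,dt$ on $\SL_{n+1}(\R)/\SL_{n+1}(\Z)$, where $F$ is a continuous compactly supported test function that detects short vectors, $\Lambda_Q$ is a lattice built from the value and first derivative of the defining function $f$ at a basepoint $y_0$ and scaled by a diagonal element $a_Q$ depending on $Q$ and $\epsilon$, and $\{u_t : t \in \R\}$ is a one-parameter unipotent encoding the second-order Taylor data of $f$ along a direction of nonzero curvature at $y_0$. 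The decisive feature that allows us to work at the critical scale $\epsilon\asymp Q^{-2}$ is that $u_t$ mixes a linear and a quadratic term on the same graph coordinate, so $u_1 - \textup{Id}$ has rank $2$ rather than $1$.

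First I would cover $\mathcal{M}\setminus(\mathcal{M}_{\textup{flat}}\cup\mathcal{M}_{\textup{quad}})$ by finitely many open patches $U_i$, on each of which $\mathcal{M}$ is a $C^4$ graph $y_n=f_i(y_1,\dots,y_{n-1})$ with nonvanishing second fundamental form, and prove the desired lower bound on each patch separately, with $\textup{vol}\,\mathcal{M}\setminus(\mathcal{M}_{\textup{flat}}\cup\mathcal{M}_{\textup{quad}})$ replaced by $\textup{vol}\,U_i$. Choosing $F$ so that the orbit average lower-bounds the number of short vectors across all $t\in[0,T]$, Ratner's measure-classification and orbit-closure theorems then deliver the central dichotomy: either the orbit $\{u_t \Lambda_Q : t \in [0,T]\}$ equidistributes as $T \to \infty$ with respect to Haar measure on $\SL_{n+1}(\R)/\SL_{n+1}(\Z)$, in which case $\int_0^T F(u_t \Lambda_Q)\,dt$ matches the heuristic constant $\frac{2\epsilon Q^{n+1}}{(n+1)\zeta(n+1)}\,\textup{vol}\,U_i$ up to a factor $1-\delta$; or else the orbit closure is contained in $H\Gamma$ for some proper $\Q$-algebraic subgroup $H\subsetneq\SL_{n+1}$.

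The main obstacle, and the technical heart of the paper, is to rule out this second alternative for basepoints $y_0$ away from $\mathcal{M}_{\textup{quad}}$. This is exactly the algebraic-subgroup classification problem flagged in the abstract: I would need to enumerate the proper $\Q$-algebraic subgroups $H\subset\SL_{n+1}$ whose real points contain a one-parameter unipotent with $u_1-\textup{Id}$ of rank $2$, and then translate the geometric containment of $\{u_t \Lambda_Q\}$ in $H\Gamma$ into an algebraic condition on the Taylor data of $f$ at $y_0$. The hoped-for output is that any such $H$ preserves either a rational hyperplane (ruled out by the nonvanishing second fundamental form on each patch) or a nonzero rational quadratic form on $\R^{n+1}$, and that the orbit-containment therefore produces a nonzero rational quadratic polynomial vanishing at $(y_0, f_i(y_0))$, placing that point in $\mathcal{M}_{\textup{quad}}$ and contradicting the choice of patch.

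To convert this qualitative dichotomy into a bound valid for all $Q\ge Q_0$, I would combine Ratner with a quantitative Kleinbock--Margulis nondivergence estimate to rule out escape of mass into the cusp, and use continuity of $\Lambda_Q$ in $y_0$ together with compactness of $\overline{U_i}$ to promote pointwise equidistribution to a version uniform in $y_0 \in U_i$. The range $\epsilon\in[Q^{-2},Q^{-1}]$ is already covered by the Beresnevich--Huang estimates, so only $\epsilon\in[\delta Q^{-2}, Q^{-2}]$ actually requires the new input. Summing over the finite cover, and checking that thin collar neighborhoods of the patch boundaries contribute only $o(\epsilon Q^{n+1})$ to the count, then yields the global lower bound with the claimed factor $1-\delta$.
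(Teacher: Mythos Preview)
Your high-level strategy matches the paper's: build a rank-$2$ unipotent from the local Taylor data, invoke Ratner and Dani--Margulis, and use the subgroup classification (the paper's Theorem~\ref{thm:unRatnered}) to show that the only obstruction to equidistribution is a preserved rational quadratic form. But the uniformity step has a genuine gap. You propose to upgrade pointwise equidistribution to a uniform bound over $y_0 \in U_i$ via ``continuity of $\Lambda_Q$ in $y_0$ together with compactness of $\overline{U_i}$.'' This cannot work as stated, because $\mathcal{M}_{\textup{quad}}$ is \emph{dense} in $\mathcal{M}$: the square of any rational linear form is a rational quadratic polynomial, and rational hyperplanes meet $\mathcal{M}$ in a dense set. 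Hence every open patch $U_i \subset \mathcal{M}$ contains a dense set of basepoints at which your unipotent preserves a rational quadratic form and the Ratner alternative is realized. The $Q_0$ required for approximate equidistribution blows up as $y_0$ approaches any such basepoint, so no uniform $Q_0$ can exist over $\overline{U_i}$.

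The paper's fix is to construct, for the given $\delta$, a compact set $K$ in the ambient ball obtained by removing an open neighborhood of \emph{every} rational quadric $\mathcal{M}_i$, with the neighborhood widths $c'\epsilon_i$ chosen summable so that $K$ still captures at least $(1-\delta/2)\cdot 2\epsilon\cdot\textup{vol}\,\mathcal{M}\setminus\mathcal{M}_{\textup{quad}}$ of the volume of $\mathcal{M}_\epsilon$. Since $K$ is compact and no rational quadratic form vanishes anywhere on it, the associated nilpotents range over a compact subset of $\mathscr{W}_0$, and the uniform Dani--Margulis estimate (Theorem~\ref{thm:uniform_parabola}) applies. The cone $\mathcal{C}_Q(\mathcal{M}_\epsilon)$ is then covered by explicit ``parallel patch cones'' (Notation~\ref{notat:dQ_cube}), and for each patch meeting $K$ one approximates it by a slab between level sets of a quadratic form $R$ built from the Taylor expansion of the graph function at a point of $K$ in that patch (Lemma~\ref{lem:parallel_patch}). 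Two further points where your sketch diverges from what is actually needed: the unipotent is tied to the \emph{third}-order jet, not just the second (the condition $R^{\circ}_{111}(0)=0$ in \eqref{eq:derivative_conds} uses $G_{111}$, which is why the hypothesis is $C^4$); and the paper works directly with thickened parabolas in $\R^{n+1}$ acting on the standard lattice, rather than with a $Q$-dependent lattice $\Lambda_Q$ in the Dani correspondence.
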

A variant of this result that applies the effective results in \cite{Lind25eff} is the following.

\begin{thm}
\label{thm:main_eff}
There is an absolute $\kappa > 0$ so we have the following:

Take $\mathcal{M}$ to be a $C^5$ curve in $\R^2$.  Take $\mathcal{M}_0$ to be the set of $x \in \mathcal{M}$ such that the curvature of $\mathcal{M}$ at $x$ is nonzero, and such that the osculating conic to $\mathcal{M}$ at $x$ has order of contact exactly four.

Then, for any $\delta > 0$, there is some $Q_0 > 0$ so that, for any $Q > Q_0$ and $\epsilon$ in $\left[Q^{-2 - \kappa}, Q^{-1}\right]$,
\[\#\big(X(\QQ^n, Q) \,\cap\, \mathcal{M}_{\epsilon}\big) \,\ge\, (1 - \delta)\,\frac{2 \epsilon Q^{3}}{3\cdot\zeta(3)} \,\textup{vol}\, \mathcal{M}_0.\]
\end{thm}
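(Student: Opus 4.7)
The plan is to establish Theorem \ref{thm:main_eff} by running the same dynamical strategy that underlies Theorem \ref{thm:main} specialized to $n = 2$, but replacing the qualitative applications of Ratner's theorems with the polynomial-rate effective equidistribution for one-parameter unipotent subgroups of $\SL_3(\R)$ proved in \cite{Lind25eff}. Using a partition of unity and the $C^5$ hypothesis, it suffices to work on finitely many coordinate patches on which the curve is the graph $y = f(x)$ of a $C^5$ function with $f''(x_0) \ne 0$, and on which the osculating conic at each $x_0 \in \mathcal{M}_0$ has order of contact exactly four. I would prove the lower bound with constant $(1-\delta)$ on each compact subarc and recover the global statement by summing.

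On the chosen arc, the standard dictionary translates the count of $(q,p_1,p_2)\in\Z^3$ with $1 \le q \le Q$ and $|qf(p_1/q) - p_2| \le q\epsilon$ into a count of primitive vectors of bounded norm in the lattices $a_s u_t g_0 \Z^3$, where $\{a_s\}$ is a diagonal renormalization, $\{u_t\}\subset \SL_3(\R)$ is a one-parameter unipotent recording the tangent data of $f$, and $g_0$ encodes the Taylor expansion of $f$ at $x_0$ through order four. Because $f''(x_0) \ne 0$, the matrix $u_1 - \text{Id}$ has rank $2$, placing us precisely in the setting of the effective equidistribution of \cite{Lind25eff}. For orbits of length $T$, that result gives a polynomial rate $T^{-\kappa_0}$ of convergence of a truncated Siegel transform against Haar measure, provided the orbit is not too close to a proper closed homogeneous subvariety of $\SL_3(\R)/\SL_3(\Z)$ stabilized by $\{u_t\}$. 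Plugging this rate into the Siegel-transform count recovers the heuristic \eqref{eq:heuristic} up to factor $(1 - \delta)$, valid for $\epsilon$ as small as $Q^{-2 - \kappa}$ once $\kappa > 0$ is a sufficiently small multiple of $\kappa_0$.

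The main obstacle is verifying the non-concentration condition quantitatively. Closed $\{u_t\}$-invariant homogeneous subvarieties of $\SL_3(\R)/\SL_3(\Z)$ correspond to proper $\Q$-algebraic subgroups of $\SL_3$ whose real points contain the rank-$2$ unipotent $\{u_t\}$. The classification of such subgroups announced in the abstract reduces concentration to one of two scenarios: either the osculating conic to $\mathcal{M}$ at $x_0$ is itself a rational conic of low height, or it agrees with $\mathcal{M}$ to order five or higher. The definition of $\mathcal{M}_0$ directly excludes the latter, while a Diophantine analysis (bounding the height of any rational conic agreeing with $\mathcal{M}$ to within $\epsilon$ across a window of horizontal length roughly $Q^{-1/2}$) excludes the former on all of $\mathcal{M}_0$. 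The delicate step is matching the intrinsic closeness-to-subvariety measure used in \cite{Lind25eff} with an explicit function of the fifth-order Taylor coefficient of $f$ at $x_0$; with this match made, the $C^5$ hypothesis supplies a lower bound on the separation that is uniform on compact subarcs of $\mathcal{M}_0$, and the theorem follows.
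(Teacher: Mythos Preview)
Your overall architecture matches the paper's: follow the parallel-patch argument of Section~\ref{ssec:main}, but replace the qualitative input Theorem~\ref{thm:uniform_parabola} by the effective Theorem~\ref{thm:R3_thickened}, which packages \cite{Lind25eff} for $\SL_3$. The gap is in your verification of the quantitative non-concentration condition.

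You assert that the order-four contact hypothesis, through the fifth Taylor coefficient, gives a lower bound on the separation from the exceptional loci that is \emph{uniform on compact subarcs of $\mathcal{M}_0$}. This is false. The relevant condition (the set $\msW(C,T_0)$ in Section~\ref{ssec:eff}) asks that the unit vector in $\Img(w_x^2)$, which is the direction of $(1,x_0,f(x_0))$, satisfies $|P(v)|\ge T_0^{-C}$ for every nonzero integral quadratic form $P$ of height at most $T_0$. That is a statement about how close the \emph{point} $(x_0,f(x_0))$ is to rational conics, not about how quickly $\mathcal{M}$ peels away from its own osculating conic. At any point of $\mathcal{M}_0$ that happens to lie on a rational conic (and such points are dense along any nondegenerate arc), the condition fails outright, regardless of the fifth derivative. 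Your ``Diophantine analysis bounding the height of any rational conic agreeing with $\mathcal{M}$ to within $\epsilon$ across a window of length $Q^{-1/2}$'' does not help: the osculating conic at such a point \emph{is} rational, agrees with $\mathcal{M}$ across that window to within $O(Q^{-5/2})\ll\epsilon$, and its height is whatever it is.

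What the order-four condition actually buys is that the Veronese image $\phi(\mathcal{M}_0)\subset\R^5$, with $\phi(x,y)=(x,y,x^2,xy,y^2)$, is a nondegenerate $C^5$ curve. This is the hypothesis needed to invoke Kleinbock--Margulis \cite{Klein98}, which then gives that the set of $x\in\mathcal{M}_0$ failing the Diophantine condition for a given $T_0$ has small arclength. This is the paper's Lemma~\ref{lem:Qhugger}: for every $\delta>0$ there exists $T_0$ such that $w_x\in\msW(6,T_0)$ on a subset of $\mathcal{M}_0$ of arclength at least $(1-\delta)\,\mathrm{arclength}(\mathcal{M}_0)$. One then runs the effective equidistribution only on that subset. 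The point is that you cannot get the arithmetic condition pointwise from the geometric one; you need a metric Diophantine approximation step, and the fifth derivative enters only as the nondegeneracy input to that step.
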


As in Beresnevich's work, our approach to proving these theorems is to decompose the cone $\mcC_Q(\mcM_{\epsilon})$ into simpler solids where we can prove lower bounds for the lattice point count. In Beresnevich's work, this cone is approximately decomposed into a union of parallelepipeds of volume $\gg 1$. Applying dynamical work of Kleinbock and Margulis \cite{Klein98} shows that most of these parallelepipeds have large lattice widths, and the flatness theorem \cite{Bana99} then gives a lower bound for the number of lattice points in the cone.

For our work, we instead cover this cone with what we will call thickened parabolas, non-convex sets that may be written as unions of parabolas. The central result of this paper gives a criterion for these sets to contain integer points. 

\subsection{Thickened parabolas}
\begin{defn}
\label{defn:1par}
Choose an integer $n \ge 3$, and take $U = \{u_t:\, t \in \R\}$ to be a $1$-parameter unipotent subgroup of $\SL_n(\R)$. Taking $u = u_1$, we will assume that
\[\textup{rank}(u - \text{Id}) = 2 \quad\text{and}\quad (u - \text{Id})^2 \ne 0.\]

Given $x \in \R^n$ outside the kernel of $(u - \text{Id})^2$, we see that $Ux$ is a parabola. With this in mind, given a nonempty open set $B$ in $\R^n$, we refer to $UB = \{ub\,:\,\, u \in U, b \in B\}$ as a \emph{thickened parabola}.
\end{defn}

We are interested in $UB \cap \Z^n$, the set of integer points in this thickened parabola. This set may be empty. After all, given a quadratic form $P$ on $\R^n$ preserved by $u$, we have
\[P(u_t B) = P(B) \quad \text{for all } t.\]
In the case that $P$ is an integral quadratic form, $P(\Z^n)$ is a subset of $\Z$, and $UB \cap \Z^n$ is empty so long as $P(B)$ does not meet this set.

Outside of this case, thickened parabolas contain integer points.
\begin{thm}
\label{thm:parabola}
Take $U$ to be the $1$-parameter unipotent subgroup of $\SL_n(\R)$ considered above. Suppose no nonzero rational quadratic form is preserved by $U$. Then, for every nonempty open set $B$ of $\R^n$, the thickened parabola $UB$ contains infinitely many points in $\Z^n$.
\end{thm}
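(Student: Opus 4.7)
The plan is to apply Ratner's orbit closure theorem to the action of $U$ on the space of unimodular lattices $X = \SL_n(\R)/\SL_n(\Z)$, translate the question about integer points in $UB$ into one about the $U$-orbit closure of the standard lattice $\Lambda_0 = \Z^n$, and then use the classification of $\Q$-algebraic subgroups advertised in the abstract to exploit the assumption that $U$ preserves no nonzero rational quadratic form.

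By Ratner's theorem, the closure of the $U$-orbit of $\Lambda_0$ in $X$ is $H \cdot \Lambda_0$ for some closed subgroup $H \subseteq \SL_n(\R)$ containing $U$, with $H \cap \SL_n(\Z)$ a lattice in $H$; in the arithmetic setting, $H$ is (the identity component of) the $\R$-points of a $\Q$-algebraic subgroup of $\SL_n$. After shrinking $B$ if needed, assume $0 \notin B$. I claim it suffices to produce a single $h \in H$ and $v \in \Z^n$ with $hv \in B$. Indeed, given such a pair, by Ratner we may choose sequences $u_k \in U$ and $\gamma_k \in \SL_n(\Z)$ with $u_k \gamma_k \to h$ in $\SL_n(\R)$. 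Setting $v_k = \gamma_k v \in \Z^n$, continuity of the linear action gives $u_k v_k = (u_k \gamma_k) v \to h v \in B$, so $u_k v_k \in B$ for $k$ large, hence $v_k \in UB \cap \Z^n$. To upgrade this to infinitely many integer points, I iterate: having found distinct $v_1, \ldots, v_{k-1} \in UB \cap \Z^n$, choose an open ball $B_k \subseteq B \setminus \bigcup_{j < k} U v_j$---possible since $n \geq 3$ and each parabola $U v_j$ is one-dimensional---and apply the preceding construction with $B_k$ in place of $B$ to produce $v_k \in UB_k \cap \Z^n$. This $v_k$ cannot coincide with any earlier $v_j$, since the orbit $U v_k$ meets $B_k$ and $B_k$ is disjoint from $U v_j$.

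It remains to find $h \in H$ and $v \in \Z^n$ with $hv \in B$. Since $U \subseteq H$, every rational quadratic form preserved by $H$ is also preserved by $U$, so by hypothesis $H$ preserves no nonzero rational quadratic form; in particular, because the square of a nonzero rational linear form is a nonzero rational quadratic form, $H$ preserves no nonzero rational linear form either. Here I invoke the paper's classification of $\Q$-algebraic subgroups of $\SL_n$ whose real points contain a one-parameter unipotent subgroup whose infinitesimal generator is a nilpotent of rank $2$ with nonzero square. The expected conclusion is that any such subgroup either (i) preserves a nonzero rational quadratic form, or (ii) is large enough that $H \cdot \Z^n := \{hv : h \in H, v \in \Z^n\}$ is dense in $\R^n$. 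Under our hypothesis only case (ii) is available, and so $H \cdot \Z^n$ meets the open set $B$, completing the proof. The main obstacle is the classification step itself: one must characterize precisely which $\Q$-algebraic subgroups $\mathbf{H} \subseteq \SL_n$ admit such a unipotent element, and then verify that every such subgroup not preserving a rational quadratic form acts densely enough on $\Z^n$ to fill $\R^n$. The obvious obstruction of preserved linear forms is absorbed by squaring, but subgroups preserving rational flags, higher tensors, or intermediate reductive structures (e.g.\ stabilizers of indefinite rational quadratic forms of various signatures) each require separate treatment.
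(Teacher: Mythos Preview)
Your proposal is correct and is essentially the argument the paper has in mind: the paper states that, assuming Ratner's theorems, Theorem~\ref{thm:unRatnered} (the classification result you invoke) straightforwardly implies Theorem~\ref{thm:parabola}, and what you have written is precisely that derivation---Ratner's orbit closure theorem identifies $\overline{U\Lambda_0}$ with $H(\R)^0\Lambda_0$ for the minimal $\QQ$-group $H$, and Theorem~\ref{thm:unRatnered} then gives transitivity of $H(\R)^0$ on $\R^n$ minus finitely many proper subspaces, from which density of $H\cdot\Z^n$ and your approximation argument follow. The paper itself does not spell out this reduction but instead proves the stronger quantitative Theorem~\ref{thm:uniform_parabola} via Dani--Margulis, so your writeup in fact fills in the step the paper leaves to the reader.
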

Like with Theorem \ref{thm:main}, we can give a heuristically sharp lower bound for the number of integer points in a segment of a thickened parabola so long as $U$ does not preserve a nonzero integral quadratic form. For our application to hypersurfaces, we will need this result to be uniform over a family of thickened parabolas. We give this result as Theorem \ref{thm:uniform_parabola}.

Theorems \ref{thm:parabola} and \ref{thm:uniform_parabola} are proved as a consequence of Ratner's theorems on unipotent flows \cite{Ratn91} and the uniform variants of these theorems due to Dani and Margulis \cite{DaMa93}. Assuming Ratner's theorems, it is straightforward to show that the next theorem implies Theorem \ref{thm:parabola}.

\begin{thm}
\label{thm:unRatnered}

With $U$ as above, take $H$ to be the minimal Zariski closed subgroup of $\SL_n(\QQ)$ such that $H(\R)$ contains $U$. Suppose no nonzero rational quadratic form is preserved by $H$. Then there is a union $Z$ of finitely many proper subspaces of $\R^n$ such that $H(\R)^0$ acts transitively on $\R^n \backslash Z$.
\end{thm}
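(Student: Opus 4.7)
My plan is to show that $H(\R)^0$ has an open orbit on $\R^n$ and that the complement of this orbit (together with a few additional hyperplanes separating connected components) is a finite union of proper linear subspaces. The entire argument rests on translating the quadratic-form hypothesis into strong structural constraints on $H$.

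First, I'd strengthen the hypothesis. Since $H$ is $\Q$-algebraic, the $\R$-vector space of $H$-invariant quadratic forms on $\R^n$ is a $\Q$-defined subspace of $\mathrm{Sym}^2(\R^n)^*$, so it contains a nonzero element if and only if it contains a nonzero rational element. Hence $H$ preserves \emph{no} nonzero real quadratic form, and in particular no nonzero real linear form (whose square would be such a quadratic). I would then use minimality: if $W \subset \R^n$ is a $\Q$-rational subspace on which $U$ acts trivially, then the pointwise stabilizer of $W$ in $\SL_n$ is a $\Q$-algebraic subgroup containing $U$; by minimality it contains $H$, so $H$ fixes $W$ pointwise. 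This severely constrains how large the $\Q$-rational part of $\ker(N)$ can be.

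Next I'd leverage the Jordan structure of $U$. Putting $N = \log u$ in coordinates with a single size-$3$ Jordan block and $n - 3$ trivial blocks, the $U$-invariant polynomial ring on $\R^n$ is generated by the $n - 2$ linear forms $x_3, \ldots, x_n$ (dual to $\ker N$) together with the single quadratic form $Q_U = x_2^2 - 2 x_1 x_3$. Any $H$-invariant rational function on $\R^n$ must therefore lie in $\R(x_3, \ldots, x_n, Q_U)$. The key step is to show that $H$ has \emph{no} nonconstant rational invariant on $\R^n$, which is equivalent (for algebraic group actions) to having an open orbit. I would argue that the lowest-degree component of any putative nonconstant $H$-invariant, extracted from the generators above, yields an $H$-invariant linear or quadratic form on $\R^n$, contradicting the strengthened hypothesis. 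Once the open orbit $\Omega$ is in hand, $\R^n \setminus \Omega$ is an $H$-invariant closed algebraic subset whose defining ideal must be generated by linear forms (again by the lack of nontrivial invariants), hence a union of proper linear subspaces; adding in the walls separating connected components of $\Omega(\R)$ then gives the $Z$ required for transitivity of $H(\R)^0$.

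The main obstacle I anticipate is propagating the non-existence of degree-$\leq 2$ invariants to non-existence of rational invariants of any degree. The hypothesis directly addresses only quadratic forms, while ruling out open-orbit failure needs invariants of arbitrary degree. I expect this step to require a careful weight analysis of $H$'s action on the graded $U$-invariant ring, likely exploiting the Jacobson--Morozov $\sl_2$-triple extending $N$ in $\sl_n(\R)$ (even though this $\sl_2$ may not itself lie in $\mathfrak{h}$) to identify the leading component of any higher-degree invariant and to run an inductive descent to the degree-$1$ and degree-$2$ cases that the hypothesis rules out.
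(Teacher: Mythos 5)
Your proposal takes a fundamentally different tack — invariant theory and open orbits — from the paper's, which classifies the Levi decomposition of $\mathfrak{h}$ explicitly (Theorem \ref{thm:semisimple} pins down the semisimple part as $\sl(K^m)$ or $\so_Q(K^m)$ in the standard representation; Section \ref{sec:radical} then controls $\mathfrak{n}$ via Proposition \ref{prop:all_AB} and runs an induction on $n$). Your first observation, that the nonexistence of a rational invariant quadratic form upgrades to nonexistence of a real one because the space of $H$-invariant quadratics is a $\Q$-defined subspace of $\mathrm{Sym}^2(\Q^n)^*$, is correct and worth noting. But the rest does not go through as sketched, and the two places you flag as ``obstacles'' are in fact where all the content lives.

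The key step — propagating ``no $H$-invariant of degree $\le 2$'' to ``no nonconstant $H$-invariant rational function'' — is a genuine gap, and your proposed ``inductive descent on the degree via the Jacobson--Morozov $\sl_2$'' cannot work as stated. An irreducible $H$-invariant of degree $d \ge 3$ (such as $\det$ for $\SL_3$ acting on $\mathrm{Sym}^2 \C^3$) has no degree-$1$ or degree-$2$ component that is itself $H$-invariant; there is no descent. The only reason such invariants are absent in the present setting is that the rank-$2$ condition on $u_1 - \mathrm{Id}$ forces the semisimple part of $\mathfrak{h}$ to be $\sl$ or $\so$ acting in the standard representation — and proving that fact is exactly the content of Theorem \ref{thm:semisimple}, a lengthy root-system case analysis your proposal omits. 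Your sketch also doesn't engage at all with the unipotent radical: if $H$ is not reductive, invariant theory becomes considerably more delicate (rings of invariants may fail to be finitely generated, Rosenlicht's dense-orbit criterion still applies but gives much less structural control), and the paper spends Section \ref{sec:radical} precisely to control $\mathfrak{n}$.

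The final step is also incorrect as written. You assert that ``$\R^n \setminus \Omega$ is an $H$-invariant closed algebraic subset whose defining ideal must be generated by linear forms (again by the lack of nontrivial invariants).'' This implication is simply false: a connected group with no nonconstant polynomial invariant and a dense orbit can easily have a non-linear orbit boundary. For $\SL_n$ ($n \ge 3$) acting diagonally on $\C^n \oplus \C^n$, the invariant ring is trivial and there is a dense orbit, yet the complement is the locus where the two vectors are parallel — a determinantal variety, not a union of linear subspaces. (That example violates the rank hypothesis on $u_1 - \mathrm{Id}$, which is again why the classification matters, but it shows the inference from ``no invariants'' to ``linear complement'' is not sound.) Similarly, the passage from a complex open orbit to transitivity of $H(\R)^0$ off a union of linear subspaces requires more than ``adding hyperplanes separating connected components'': the real locus of a complex orbit decomposes into finitely many $H(\R)$-orbits, and a priori those orbits are semialgebraic, not polyhedral. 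The paper sidesteps all of this by exhibiting the transitive action explicitly from the classification (e.g.\ for $\sl(K^m)$, as $\SL_m(\R)^{r_1} \times \SL_m(\C)^{r_2}$ acting on $\R^{mr_1} \oplus \C^{mr_2}$, where $Z$ is transparent). I would recommend treating Theorem \ref{thm:semisimple} and Proposition \ref{prop:all_AB} as unavoidable; the invariant-theoretic gloss can't replace them.
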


This theorem is proved Lie theoretically. More specifically, the requirement that $H(\R)$ contains $U$ puts a heavy restriction on the forms that the semisimplification of $H$ can take; see Theorem \ref{thm:semisimple}. After proving this, the theorem is proved by considering the radical of $H(\R)^0$. This Lie-theoretic approach is most closely related to previous work of Dani and Margulis for $\R^3$ \cite{DaMa90} and Gorodnik for $\R^4$ \cite{Goro04}.

\subsection{Diophantine approximation for systems of forms}
Besides the applications to Diophantine approximation on hypersurfaces, our work has applications to Oppenheim's conjecture with linear constraints. Given $n \ge 3$, take $P: \R^n \to \R$ to be an indefinite quadratic form of rank $\ge 3$ that is not proportional to a rational quadratic form. Oppenheim's conjecture, as proved by Margulis \cite{Marg89}, gives that $P(\Z^n)$ is dense in $\R$.

Now choose linearly independent linear functions $A_1, \dots, A_{n-2}$ on $\R^n$. The equation $P  = 0$ defines a hypersurface $X$ in $\mathbb{P}^{n-1}_{\R}$, and $A_1 = \dots = A_{n-2} = 0$ defines a line $L$ in this projective space. We assume that $L$ is tangent to $X$ at a nonsingular point $x_0$ and otherwise does not intersect $X$.

\begin{thm}
\label{thm:localOp}
Suppose that no nonzero quadratic polynomial of the form
\[aP + \sum_{i \le j \le {n-2}} a_{ij} A_iA_j\]
 is rational. Then, given any nonempty open set $B$ of $\R^{n-1}$, there is some $Q_0 > 0$ so that, for any $Q > Q_0$, there is some primitive $\xx = (x_1, \dots, x_{n}) \in \left(\Z^{n}\right)_{\textup{prim}}$ so that
\[\big(P(\xx),\, A_1(\xx), \,\dots, \,A_{n-2}(\xx)\big) \in B\quad\text{and} \quad|x_1|, \dots, |x_n| \le Q.\]
\end{thm}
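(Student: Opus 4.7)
The plan is to deduce Theorem \ref{thm:localOp} from (the uniform version of) Theorem \ref{thm:parabola} by constructing a $1$-parameter unipotent subgroup $U = \{u_t\} \subset \SL_n(\R)$ that simultaneously preserves $P$ and each of the linear forms $A_1, \dots, A_{n-2}$. Once such $U$ is in hand, the map
\[\Phi(\xx) \;=\; \bigl(P(\xx),\, A_1(\xx),\, \dots, \,A_{n-2}(\xx)\bigr)\]
is constant on $U$-orbits, so choosing any $\xx^\ast \in \R^n$ with $\Phi(\xx^\ast)$ in the interior of $B$ and letting $B'$ be a small open ball around $\xx^\ast$ with $\Phi(B') \subset B$, every integer point $\xx \in U B'$ automatically satisfies $\Phi(\xx) \in B$. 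The existence of such $\xx^\ast$ is easy: the image of $\Phi$ contains a dense open subset of $\R^{n-1}$, since on each affine plane $\{A = a\}$ not lying in the tangent hyperplane to $X$ at $x_0$, the restriction of $P$ takes all real values.

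To construct $U$, write $L = \bigcap_i \ker A_i$ as $\R x_0 \oplus \R y_0$ for some $y_0 \in L$ complementary to $x_0$. The tangency hypotheses translate to $P(x_0) = 0$, $B_P(x_0, y_0) = 0$, and $P(y_0) \ne 0$, where $B_P$ is the polar form of $P$. Setting $\phi(\xx) = B_P(\xx, x_0)$ and $\psi(\xx) = B_P(\xx, y_0)$ (both of which vanish on $L$), I define
\[u_t(\xx) \;=\; \xx \;+\; \Bigl[-t\,\psi(\xx) - \tfrac{t^2 P(y_0)}{2}\,\phi(\xx)\Bigr]\, x_0 \;+\; t\,\phi(\xx)\, y_0.\]
A direct calculation confirms that $\{u_t\}$ is a $1$-parameter unipotent subgroup of $\SL_n(\R)$; that each $u_t$ preserves $P$ (the coefficients above are chosen precisely so that $2\, B_P(\xx, u_t\xx - \xx) + P(u_t\xx - \xx) \equiv 0$) and each $A_i$ (since $u_t\xx - \xx \in L \subset \ker A_i$); that $u_1 - \text{Id}$ has rank $2$ (as $\phi$ and $\psi$ are linearly independent, $B_P$ being nondegenerate and $x_0, y_0$ being linearly independent); and that $(u_1 - \text{Id})^2 \xx = -P(y_0)\,\phi(\xx)\,x_0 \not\equiv 0$. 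So $U$ satisfies Definition \ref{defn:1par}.

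The central verification is that the space of $U$-invariant real quadratic forms on $\R^n$ is exactly $\mathrm{span}_\R\bigl(P,\, \{A_i A_j\}_{1 \le i \le j \le n-2}\bigr)$. The inclusion $\supseteq$ is immediate, and equality follows from a dimension count: since $u_1 - \text{Id}$ is a nilpotent matrix of rank $2$ with cube zero, its Jordan form on $\R^n$ consists of one $J_3$ block plus $n - 3$ trivial blocks, and standard invariant theory for this action yields exactly $1 + \binom{n-1}{2}$ invariant quadratic forms, matching the dimension of the right-hand side (noting that $P \notin \mathrm{span}(A_iA_j)$ because $P|_L \not\equiv 0$). Under the hypothesis of Theorem \ref{thm:localOp}, no nonzero element of this invariant space is rational, which is exactly the hypothesis required by Theorem \ref{thm:parabola}. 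The main obstacle is then to upgrade from the qualitative statement (infinitely many integer points in $UB'$) to the uniform statement (at least one primitive integer point in $UB' \cap [-Q, Q]^n$ for every sufficiently large $Q$); this is what the paper's quantitative refinement Theorem \ref{thm:uniform_parabola} is designed to provide, with primitivity then secured by a standard sieving argument recovering a positive proportion of the resulting lattice-point count.
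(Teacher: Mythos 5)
Your proof is essentially correct and tracks the paper's approach closely: the paper also constructs the same rank-$2$ unipotent $U$ preserving $P$ and the $A_i$ (its $w$ is your $w$ up to sign and a choice of normalization of the polar form), checks that the $U$-invariant quadratic forms are exactly the pencil spanned by $P$ and the $A_iA_j$, and then applies the uniform thickened-parabola estimate of Theorem \ref{thm:uniform_parabola} to the fibers over $L_w$. The paper packages this as the more quantitative Theorem \ref{thm:locOp_precise} (proved via Theorem \ref{thm:locOp_uniform}), from which Theorem \ref{thm:localOp} is noted to follow; you have reconstructed that argument directly.

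The one genuine stylistic difference worth flagging: to identify the $U$-invariant quadratic forms, the paper computes directly with polar forms (showing $\langle\zz,\zz\rangle_P = \langle\zz,\ee\rangle_P = \langle\vv,\zz\rangle_P = \langle\ee_i,\ee\rangle_P = 0$ and concluding $P$ lies in the pencil), whereas you count dimensions via the Jordan structure of $w$ (one $J_3$ block plus $n-3$ trivial blocks, giving $1 + \binom{n-1}{2}$ invariants, matching $\dim\,\mathrm{span}(P, \{A_iA_j\})$). Both are correct; the dimension count is cleaner conceptually but leaves the actual $\mathfrak{sl}_2$-module computation as a black box, which is exactly the computation the paper spells out. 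Two small imprecisions that don't affect correctness: your explicit formula for $u_t$ requires the normalization $B_P(\xx,\xx) = P(\xx)$ (with the un-halved polar form the cocycle condition fails), so it's worth stating that convention; and the closing remark about ``a standard sieving argument'' to recover primitivity is unnecessary, since Theorem \ref{thm:uniform_parabola} already counts points in $(\Z^n)_{\text{prim}}$ directly.
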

Here, we have used the standard metric on $S^{n-1}$ to define the distance function on $\mathbb{P}^{n-1}_{\R}$.

 This result was proved in the case $n = 3$ by Dani and Margulis \cite{DaMa90}, in a slightly weaker form in the case $n = 4$ by Gorodnik \cite{Goro04}, and in the case of general $n$ under a genericity hypothesis by Dani \cite{Dani08}.

\subsection{The layout of this paper}
In Section \ref{sec:DaMa}, we prove a uniform lower bound for the primitive integer points in a thickened parabola conditional on the Lie-theoretic result Theorem \ref{thm:unRatnered} by applying Ratner's theorem and dynamical results of Dani and Margulis. 

In Section \ref{sec:hype}, we apply our results on thickened parabolas to find rational points near patches of quadric hypersurfaces. We then prove Theorem \ref{thm:main} in Section \ref{ssec:main} by locally approximating arbitrary hypersurfaces with quadrics. This is followed by the proofs of the other main theorems in Section \ref{ssec:RAG}.

In Sections \ref{sec:semisimple} and \ref{sec:radical}, we finish the paper by giving a proof of Theorem \ref{thm:unRatnered}. This starts by first determining the possible relevant semisimple Lie groups, which we do in Section \ref{sec:semisimple} before handling radicals Section \ref{sec:radical}.

\subsection*{Acknowledgements}
We would like to thank Osama Khalil, who pointed out the relevance of \cite{Lind25eff}. This work also benefited from useful conversations with Victor Beresnevich, William Duke, Tom Gannon, James Leng, Redmond McNamara, Peter Sarnak, and Jeremy Taylor. 

The author served as a Clay Research Fellow for part of the writing of this paper, and would like to thank the Clay Mathematics Institute for their support.

\section{Applying Ratner's theorems}
\label{sec:DaMa}

The goal of this section is to prove Theorem \ref{thm:uniform_parabola}, which gives a lower bound for the number of primitive lattice points in a thickened parabola. Throughout this section, we will assume Theorem \ref{thm:unRatnered}; the proof of this result can be found in Sections \ref{sec:semisimple} and \ref{sec:radical}.

\begin{notat}
\label{notat:uniform_parabola}
Fix some $n \ge 2$. Take $\msW$ to be the set of nilpotent $w$ in $\sl_n(\R)$ such that $w^2 \ne 0$ and $w$ has rank $2$.

Given $w$ in $\msW$, we define the hyperplane
\[L_w = \Img(w)^{\perp} \oplus \Img(w^2) \subseteq \R^n,\]
where the orthogonal complement is taken with respect to the usual inner product structure on $\R^n$. This is defined so that the map $(t, v) \mapsto \exp(tw)v$ defines a homeomorphism
\[\R \times \left(L_w \backslash \ker\, w^2\right)\isoarrow \R^n \backslash \ker w^2.\]

We take $\msW_0$ to be the subset of $w$ in $\msW$ such that $\exp(w)$ preserves no nonzero rational quadratic form.
\end{notat}

\begin{thm}
\label{thm:uniform_parabola}
Choose a compact subset $K$ of $\msW_0$ and $\epsilon > 0$. Then there is some $T_0 > 0$ so we have the following:

Choose $w \in K$ and an open subset $B$ of $L_w$. We assume all vectors in $B$ have norm at most $\epsilon^{-1}$. Take
\[B'  = \left\{z \in B\,:\,\, \dist(z, L_w \backslash B) > \epsilon\right\}.\]
 Then, for $\beta >  1 + \epsilon$, we have
\begin{equation}
\label{eq:uniform_conclusion}
\# \left(U_{[T, \beta T]} B \cap (\Z^n)_{\textup{prim}} \right)\,\ge\, (1 - \epsilon)\cdot  \zeta(n)^{-1} \cdot \vol\, U_{[T, \beta T]} B'.
\end{equation}
\end{thm}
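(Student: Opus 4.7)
The plan is to translate the primitive lattice count in $U_{[T,\beta T]}B$ into a time-integral of a Siegel-type transform along the unipotent flow $\{u_{-s}\Z^n\}_s$, to apply the uniform Dani--Margulis version of Ratner's theorem, and to evaluate the resulting homogeneous integral via Theorem~\ref{thm:unRatnered}. Since $w$ has rank $2$ with $w^2\ne 0$, it is conjugate to a single $3\times 3$ Jordan block plus zero blocks, so $w^3=0$. A direct computation using $u_{-s}(w+sw^2)=w$ shows that the Jacobian of the parameterization $(s,b)\mapsto u_s b$ at $(s,b)\in\R\times L_w$ equals a function $J_w(b)$ independent of $s$, giving
\[\vol\bigl(U_{[T,\beta T]}B'\bigr)=(\beta-1)T\int_{B'}J_w(b)\,db.\]
Choose a smooth $\phi_\epsilon:L_w\to[0,1]$ supported in $B'$ with $\int\phi_\epsilon J_w\ge(1-\tfrac{\epsilon}{3})\int_{B'}J_w$, and a bump $h\ge 0$ on $\R$ of integral one supported in $[-1,0]$. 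Set $f(x):=\phi_\epsilon(b(x))\,h(s(x))$ for $x\notin\ker w^2$ (where $x=u_{s(x)}b(x)$ is the unique such decomposition) and $f\equiv 0$ on $\ker w^2$, so $f\ge 0$ is compactly supported with $\int_{\R^n} f=\int_{L_w}\phi_\epsilon J_w$. By Fubini,
\[\int_T^{\beta T}\widehat f(u_{-s}\Z^n)\,ds=\sum_{v\in(\Z^n)_{\textup{prim}}}\phi_\epsilon(b(v))\int_T^{\beta T}h(s(v)-s)\,ds,\]
and the right side differs from the bulk weighted count $\sum_{s(v)\in[T,\beta T-1],\,b(v)\in B'}\phi_\epsilon(b(v))$ by a boundary error $O(1)$ uniform in $T$ and $w\in K$. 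This bulk count in turn provides a lower bound for the primitive lattice count $N:=\#\bigl(U_{[T,\beta T]}B\cap(\Z^n)_{\textup{prim}}\bigr)$ we want to estimate from below.

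\textbf{Equidistribution and limit identification.}
Let $H_w\subseteq\SL_n$ be the minimal $\Q$-algebraic subgroup whose real points contain $U$, and let $\mu_w$ denote the $H_w(\R)$-invariant probability measure on $X_w:=H_w(\R)\Z^n\subset\SL_n(\R)/\SL_n(\Z)$. Ratner \cite{Ratn91}, upgraded to the quantitative Dani--Margulis version \cite{DaMa93}, gives
\[\frac{1}{(\beta-1)T}\int_T^{\beta T}\widehat f(u_{-s}\Z^n)\,ds=\int_{X_w}\widehat f\,d\mu_w+o(1)\quad(T\to\infty),\]
with the error uniform in $w\in K$, after combining with standard moment bounds controlling the non-compactness of the Siegel transform. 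Now, because $w\in\msW_0$, $H_w$ preserves no nonzero rational quadratic form, so Theorem~\ref{thm:unRatnered} provides a union $Z_w$ of finitely many proper subspaces of $\R^n$ on whose complement $H_w(\R)^0$ acts transitively. Combined with the $H_w(\R)$-invariance of both $\mu_w$ and Lebesgue measure on $\R^n$, this forces the pushforward of $\mu_w$ along the primitive orbit map (summed over $H_w(\Z)$-orbits of primitive integer vectors outside $Z_w$) to be a constant multiple of Lebesgue; the classical Siegel formula on $\SL_n(\R)/\SL_n(\Z)$ pins this constant to $\zeta(n)^{-1}$, giving
\[\int_{X_w}\widehat f\,d\mu_w=\zeta(n)^{-1}\int_{L_w}\phi_\epsilon(b)J_w(b)\,db\,\ge\,(1-\tfrac{\epsilon}{3})\zeta(n)^{-1}\int_{B'}J_w(b)\,db.\]
Chaining the estimates above and absorbing all losses into a single factor $(1-\epsilon)$ yields the desired inequality for $T\ge T_0(K,\epsilon)$.

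\textbf{Main obstacle.}
The central technical difficulty is the uniformity over $K$ in the equidistribution step. The assignment $w\mapsto H_w$ is highly discontinuous --- the $\Q$-algebraic hull of $U$ can jump upward whenever $w$ satisfies additional algebraic relations over $\Q$ --- so the Dani--Margulis bounds do not apply verbatim with a single fixed $H_w$. I would resolve this by stratifying $K$ into finitely many pieces on which $H_w$ has bounded arithmetic complexity, and by using the quantitative linearization and nondivergence estimates of \cite{DaMa93} to bound, uniformly in $w$ and in the bounded family of test functions $f$ arising from varying $B$, the time that $\{u_{-s}\Z^n\}$ spends near the proper $H_w(\R)$-invariant subvarieties of $\SL_n(\R)/\SL_n(\Z)$ associated with these strata. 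A secondary subtlety is justifying the sharp Siegel mean-value constant $\zeta(n)^{-1}$ on the restricted orbit $X_w$; this reduces to a covolume identity comparing real and arithmetic stabilizers of primitive integer vectors, forced in the end by the classical Siegel formula on $\SL_n(\R)/\SL_n(\Z)$.
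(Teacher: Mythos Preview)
Your overall strategy matches the paper's: pass to a functional version via a Siegel-type transform, invoke Dani--Margulis/Ratner to control the time average, and use Theorem~\ref{thm:unRatnered} to identify the limit. But there are two real gaps.

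\textbf{Uniformity over $K$.} Your proposed resolution---stratifying $K$ into finitely many pieces where $H_w$ has ``bounded arithmetic complexity''---does not work as stated. The set $\msW_0$ is defined by the failure of countably many rational conditions, and as $w$ ranges over $K$ the group $H_w$ can jump without any finite control on its height or its isomorphism type. The paper sidesteps this entirely by arguing by contradiction and compactness: assume a sequence $(w_i,T_i,\beta_i,f_i)$ violates the bound, pass to a subsequence with $w_i\to w\in K$ and $f_i\to f$, and prove a Dani--Margulis-type lemma (Lemma~\ref{lem:DaniMarg}) for a \emph{varying} sequence of unipotent flows. The key output is that every $U$-ergodic component of the weak-$*$ limit measure is invariant under some conjugate of the group $L$ attached to the \emph{limit} $w$, not to the individual $w_i$. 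Since $w\in\msW_0$, Theorem~\ref{thm:unRatnered} applies to $L$, and the contradiction closes. You are trying to control each $H_{w_i}$ directly, which is both unnecessary and, as you yourself note, obstructed by the discontinuity of $w\mapsto H_w$.

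\textbf{The Siegel constant on the restricted orbit.} Your claim that the classical Siegel formula ``pins this constant to $\zeta(n)^{-1}$'' is not justified when $H_w\subsetneq\SL_n$: the Siegel mean-value theorem lives on $\SL_n(\R)/\SL_n(\Z)$, and $X_w$ has measure zero there, so no averaging argument transfers it. What is actually needed is only the inequality $\int_{X_w}\widehat f\,d\mu_w\ge\zeta(n)^{-1}\int_{\R^n} f$, and the paper proves this separately (Lemma~\ref{lem:Haar}). Uniqueness of the $gLg^{-1}$-invariant Radon measure on the open dense orbit $Y\subset\R^n$ from Theorem~\ref{thm:unRatnered} gives $\int\widehat f\,d\mu=c\,\zeta(n)^{-1}\int f$ for some $c\ge 0$ and all $f\in C_c(Y)$; then replacing $f$ by $f(r^{-1}\,\cdot\,)$ for large $r$ and using that primitive-point counts in dilated regions match the volume heuristic uniformly on compacta of $\SL_n(\R)/\SL_n(\Z)$ forces $c\ge 1$. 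Your appeal to a ``covolume identity comparing real and arithmetic stabilizers'' does not supply this step.
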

Here, $(\Z^n)_{\text{prim}}$ denotes the primitive points in the lattice $\Z^n$.

\subsection{A version for functions}
Given $\epsilon > 0$, take $\mathscr{F}_{\epsilon}$ to be the set of nonnegative functions $f: \R^n \to \R^{\ge 0}$ whose support is contained in the ball of radius $\epsilon^{-1}$ centered at the origin and which satisfy
\[\left|f(x) - f(y)\right| \le \epsilon^{-1} || x - y|| \quad\text{for all }x, y \in \R^n.\]
An equivalent form of Theorem \ref{thm:uniform_parabola} is the following:

\begin{thm}
\label{thm:uniform_parabola_func}
Choose a compact subset $K$ of $\msW_0$ and $\epsilon > 0$. Then there is some $T_0 > 0$ so we have the following:

Choose $w \in K$, $T > T_0$, $\beta > 1 + \epsilon$, and $f \in \mathscr{F}_{\epsilon}$. Then
\begin{equation}
\label{eq:uniform_parabola_func}
\int_T^{\beta T} \sum_{v \in (\Z^n)_{\textup{prim}}}  f(\exp(tw) v)dt \,\ge\, -\epsilon T + \zeta(n)^{-1} \cdot (\beta - 1) T\cdot  \int_{\R^n} f(x)dx,
\end{equation}
where the measure on $\R^n$ is the standard Euclidean measure.
\end{thm}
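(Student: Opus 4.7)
The approach is dynamical: we rephrase the left-hand side of (\ref{eq:uniform_parabola_func}) as a time integral of the Siegel transform along a unipotent orbit on $\SL_n(\R)/\SL_n(\Z)$, then invoke Ratner's theorem together with the structural input of Theorem \ref{thm:unRatnered}. Concretely, let
\[\hat f(g) := \sum_{v \in (\Z^n)_{\textup{prim}}} f(gv),\]
a function on $\SL_n(\R)/\SL_n(\Z)$. Then the left-hand side equals $\int_T^{\beta T} \hat f(\exp(tw))\,dt$, while Siegel's mean value formula gives
\[\int_{\SL_n(\R)/\SL_n(\Z)} \hat f\, d\mu = \zeta(n)^{-1} \int_{\R^n} f\,dx\]
with $\mu$ the probability Haar measure. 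The target inequality thus asserts that the ergodic average of $\hat f$ along the parabolic trajectory approximately dominates the global mean of $\hat f$.

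The main step is equidistribution of the unipotent orbit, made uniform in $w \in K$. For each $w \in \msW_0$, Ratner's theorem combined with Theorem \ref{thm:unRatnered} produces a $\Q$-subgroup $H_w \subseteq \SL_n$ such that $\{\exp(tw)\cdot e\}$ equidistributes on the closed homogeneous subset $H_w(\R)^0 \cdot e \subseteq \SL_n(\R)/\SL_n(\Z)$, and moreover $H_w(\R)^0$ acts transitively on the complement of finitely many proper subspaces of $\R^n$. Uniformity in $w \in K$ comes from the non-divergence estimates of Dani--Margulis \cite{DaMa93} together with a limiting argument: if uniform equidistribution failed along a sequence $w_n \to w_\infty \in K$, then mass would have to escape either to the cusp of $\SL_n(\R)/\SL_n(\Z)$ or onto a proper orbit closure coming from a $\Q$-subgroup $H' \subsetneq H_{w_\infty}$; non-divergence rules out the first possibility, while the second forces $\exp(w_\infty) \in H'(\R)$, contradicting the minimality defining $H_{w_\infty}$. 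Because $\hat f$ is unbounded at the cusp, we first truncate $\hat f$ at height $M$, using the standard pointwise bound for Siegel transforms together with Dani--Margulis non-divergence to keep the truncation error below $O(\epsilon)$, and then apply uniform equidistribution to the bounded truncation.

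The final step evaluates $\int_{H_w(\R)^0 \cdot e} \hat f\,d\mu_w$ from below. Unfolding the Siegel sum over $\Gamma := H_w(\R)^0 \cap \SL_n(\Z)$-orbits of primitive vectors, this integral equals $\sum_{[v]} \int_{H_w(\R)^0/\Gamma_v} f(hv)\,dh$, where $\Gamma_v$ denotes the $\Gamma$-stabilizer of $v$. By Theorem \ref{thm:unRatnered}, for $v$ outside a finite union of proper subspaces the $H_w(\R)^0$-orbit of $v$ is a cofinite subset of $\R^n$, and a change-of-variables (using that $H_w \subseteq \SL_n$ preserves Lebesgue measure) evaluates the inner integral as a positive constant times $\int_{\R^n} f\,dx$; comparison with the analogous unfolding of Siegel's formula on $\SL_n(\R)/\SL_n(\Z)$ pins down the sum of these constants as $\zeta(n)^{-1}$, completing the lower bound.

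The principal obstacle is the uniformity in $w \in K$: the group $H_w$ need not be locally constant in $w$, and along a convergent sequence $w_n \to w_\infty$ the $H_{w_n}$ could a priori be much smaller than $H_{w_\infty}$, distorting the limiting measure. The resolution is that $\msW_0$ excludes precisely the one degenerate behavior --- preservation of a rational quadratic form --- that Theorem \ref{thm:unRatnered} identifies as the sole obstruction to $H_w$ acting transitively enough on $\R^n$, so the stratification of $\msW_0$ by the type of $H_w$ behaves well under limits within the compact set $K$.
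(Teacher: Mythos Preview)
Your overall strategy matches the paper's: reformulate via the Siegel transform, argue by compactness and a limiting sequence $(w_i,T_i,\beta_i,f_i)$, control the limiting measure on $\SL_n(\R)/\SL_n(\Z)$ via Dani--Margulis and Ratner so that its ergodic components are invariant under conjugates of $L = H_w(\R)^0$, then use Theorem~\ref{thm:unRatnered} to show any such invariant measure satisfies the Siegel lower bound, with a truncation to handle unboundedness of $\hat f$. This is exactly the architecture of the paper's proof (packaged there as Lemmas~\ref{lem:DaniMarg} and~\ref{lem:Haar}).

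The one place where your argument diverges and is not quite complete is the evaluation of $\int \hat f\,d\mu_w$. You propose to unfold over $\Gamma$-orbits of primitive vectors and then ``compare with the analogous unfolding of Siegel's formula on $\SL_n(\R)/\SL_n(\Z)$'' to pin down the constant as $\zeta(n)^{-1}$. But the $\Gamma$-orbit structure on $(\Z^n)_{\textup{prim}}$ bears no direct relation to the $\SL_n(\Z)$-orbit structure, and the normalizations of the invariant measures on the two homogeneous spaces are unrelated, so this comparison does not obviously yield the right constant. The paper instead argues as follows: transitivity of $gLg^{-1}$ on $Y=\R^n\setminus Z$ plus uniqueness of invariant measure on a homogeneous space forces $\int \hat f\,d\mu_w = c\,\zeta(n)^{-1}\int f$ for \emph{some} constant $c\ge 0$ and all $f\in C_c(Y)$; then a scaling trick (replace $f$ by $f(r^{-1}\cdot)$, use that for $g$ in a large compact set the lattice sum is close to $\zeta(n)^{-1}\int f(r^{-1}z)\,dz$ once $r$ is large) shows $c\ge 1$. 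This sidesteps any orbit-by-orbit bookkeeping and is what you should use in place of the unfolding.
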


We will prove that this theorem implies Theorem \ref{thm:uniform_parabola}. The opposite implication is proved similarly.

\begin{proof}[Proof  that Theorem \ref{thm:uniform_parabola_func} implies Theorem \ref{thm:uniform_parabola}]
We note that it suffices to prove the alternative version of the theorem where \eqref{eq:uniform_conclusion} is swapped out for
\begin{equation}
\label{eq:uniform_conclusion_alt}
\# \left(U_{[T, \beta T]} B \cap (\Z^n)_{\textup{prim}} \right)\,\ge\, - \epsilon T +    \zeta(n)^{-1} \cdot \vol\, U_{[T, \beta T]} B'
\end{equation}
under the condition that $\beta < 2$. For suppose that this alternative version of the theorem holds, and we wish to prove the original version for $(\epsilon, K)$, so that we may produce $T_0$ satisfying the condition of the theorem. We note that the original theorem does not lose strength if we assume $\beta \le 2$, so we make this assumption.

So long as $\epsilon$ is sufficiently small relative to $K$, we claim the $T_0$ produced by the alternative version of the theorem for  $(\epsilon_0 = \epsilon^{n+1}, K)$ suffices for the original version for $(\epsilon, K)$. To show this, suppose we have $B$ satisfying the conditions of Theorem \ref{thm:uniform_parabola}. If $B'$ is empty, the theorem is clearly true. Otherwise, taking
\[B[\epsilon_0] = \left\{z \in B\,:\,\, \dist(z,  L_w\backslash B) > \epsilon_0\right\},\]
we find that $B[\epsilon_0]$ contains $B'$ and some disjoint ball of radius $\epsilon/2$ so long as $\epsilon < 1/2$. So there is some $c > 0$ depending on $K$ such that
\[ \vol\, U_{[T, \beta T]} B[\epsilon_0] \ge c \epsilon^{n-1}(\beta - 1)T +  \vol\, U_{[T, \beta T]} B'.\]
So long as $\epsilon$ is sufficiently small relative to $K$, we then must have
\[- \epsilon_0 T +   \vol\, U_{[T, \beta T]} B[\epsilon_0]  \ge \vol\, U_{[T, \beta T]} B',\]
giving the reduction.

We now prove this alternative version of the theorem. Given $w$ and $B$ satisfying the condition of the theorem and $\delta  < \epsilon$, take
\[B_1 = \{z \in B\,:\,\, \dist(z, \ker(w^2) \cup L_w \backslash B) \ge \delta\},\]
and take
\[Y = \left\{ \exp(tw) z \,:\,\, z \in B_1 \quad\text{and}\quad t \in [\delta, 1 - \delta]\right\}.\]
We now may choose $\epsilon_1  \in (0, \delta)$ depending only on $K$ and $\delta$ such that, if we take
\[Y_0 = \{ x \in \R^n \,:\,\, \dist(x, Y) \le \epsilon_1\},\]
then
\[U_{[0, 1]} B \supseteq Y_0.\]
The function $f: \R^n \to \R^{\ge 0}$ defined by
\[f(x) = \epsilon_1^{-1} \max(0, \epsilon_1 - \dist(x, Y)),\]
then lies in $\mathscr{F}_{\epsilon_1}$ and satisfies
\[\# \left(U_{[T, \beta T]} B \cap (\Z^n)_{\textup{prim}} \right) \ge \int_{T}^{\beta T - 1} \sum_{v \in (\Z^n)_{\textup{prim}}}  f(\exp(tw) v)dt.\]
Applying Theorem \ref{thm:uniform_parabola_func}, we see that the right hand side of this inequality is at least
\[-\epsilon_1 T + \zeta(n)^{-1} \cdot (\beta - 1) T\cdot  \int_{\R^n} f(x)dx\]
so long as $T$ is sufficiently large relative to $K$ and $\epsilon_1$. By the construction of $f$, this is at least
\[ -\epsilon_1 T + \zeta(n)^{-1} \cdot (1 - 2 \delta) \vol U_{[T, \beta T]}B_1.\]
The result follows by taking $\delta$ sufficiently small and $\epsilon_1$ sufficiently small relative to $\delta$.
\end{proof}

\subsection{The proof of Theorem \ref{thm:uniform_parabola_func}}

We record a general result in the style of Dani and Margulis \cite{DaMa93}.
\begin{lem}
\label{lem:DaniMarg}
Take $G$ to be a connected Lie group with Lie algebra $\mfg$, and choose a lattice $\Gamma$ in $G$.  Choose a $1$-parameter unipotent subgroup $U = \{\exp(tw)\,:\,\, t \in \R\}$ of $G$, where $w$ lies in $\mfg$, and take $L$ to be the minimal closed subgroup of $G$ containing $U$ such that $L \cap \Gamma$ is a sublattice of $L$, as exists by \cite[Theorem 2.3]{Shah91}.

Choose a sequence $w_1, w_2, \dots$ of nilpotent elements in $\mfg$ converging to $w$, and choose an unbounded sequence $T_1 < T_2 < \dots$ of positive real numbers.
Then there is a $U$-invariant probability measure $\mu$ on $G/\Gamma$ and a subsequence $k_1 < k_2 < \dots $ such that, for any continuous function $f: G/\Gamma \to \R$ with compact support,
\begin{equation}
\label{eq:weakstar}
\int_{G/\Gamma} f d\mu =\lim_{i \to \infty} \frac{1}{T_{k_i}}\int_{0}^{T_{k_i}} f\big(\exp(tw_{k_i}) \big)dt.
\end{equation}
Furthermore, any $U$-ergodic component of $\mu$ is $gLg^{-1}$-invariant for some $g$ in $G$ such that $gLg^{-1}$ contains $U$.
\end{lem}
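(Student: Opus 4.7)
The plan is to apply the standard package of non-divergence plus Ratner's theorem to the empirical measures
\[\mu_k = \frac{1}{T_k}\int_0^{T_k} (\exp(tw_k))_*\delta_{e\Gamma}\,dt\]
on $G/\Gamma$. The goal is to extract a weak-$*$ subsequential limit $\mu$, verify that it is a $U$-invariant probability measure, and then use Ratner's measure classification together with the defining minimality of $L$ to describe its ergodic components.

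First, to ensure no mass escapes to infinity, I would invoke the uniform Dani--Margulis non-divergence theorem \cite{DaMa93}. Since the one-parameter unipotents $\{\exp(tw_k)\}_t$ have generators converging to the nilpotent $w$, the non-divergence estimates hold with constants that can be taken uniform over large $k$. This gives, for every $\eta > 0$, a compact $K_\eta \subseteq G/\Gamma$ with $\mu_k(K_\eta) \ge 1-\eta$ for all large $k$, so Prokhorov's theorem produces a weak-$*$ convergent subsequence $\mu_{k_i} \to \mu$ with $\mu$ a probability measure. This already yields \eqref{eq:weakstar}.

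For $U$-invariance, fix $s \in \R$ and $f \in C_c(G/\Gamma)$. Using that $\exp(sw_k)$ commutes with $\exp(tw_k)$,
\[\int f\, d\big((\exp(sw_k))_*\mu_k\big) - \int f\, d\mu_k = \frac{1}{T_k}\left(\int_{T_k}^{T_k+s} - \int_0^s\right) f(\exp(tw_k)\Gamma)\, dt = O_{f,s}(T_k^{-1}).\]
Combined with $\exp(sw_k) \to \exp(sw)$ uniformly on compacta and the uniform continuity of $f$, this forces $(\exp(sw))_* \mu = \mu$, so $\mu$ is $U$-invariant.

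Finally, Ratner's measure classification theorem decomposes $\mu$ into $U$-ergodic algebraic measures, each of which is the normalized Haar measure on a closed orbit $Hx$ of some closed subgroup $H \supseteq U$ for which $H \cap g\Gamma g^{-1}$ (where $x = g\Gamma$) is a lattice in $H$. To identify these $H$ as containing a conjugate $gLg^{-1}$ with $gLg^{-1} \supseteq U$, I would apply the linearization technique of Dani--Margulis (equivalently, the Mozes--Shah convergence theorem), which says that limits of unipotent trajectories originating at a fixed base point can only be supported on orbits of groups containing conjugates of the ``minimal orbit-closure group'' for that base point, here $L$. I expect the main obstacle to be executing this last structural step with the precise quantifier ``$gLg^{-1}$ contains $U$'' for each ergodic component; the tightness and $U$-invariance steps are largely mechanical once the non-divergence of \cite{DaMa93} is invoked in its perturbative form.
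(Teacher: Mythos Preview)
Your outline is correct and matches the paper's approach: non-divergence from \cite{DaMa93} for tightness, the standard shift argument for $U$-invariance, and then linearization for the ergodic structure. The paper spends its effort on exactly the step you flagged as the obstacle, carrying out the Dani--Margulis linearization (via \cite[Theorem 7.3]{DaMa93}) directly against the perturbed flows $\exp(tw_{k_i})$ to show that $\mu(X(H)\Gamma/\Gamma)>0$ forces $H$ to contain a $\Gamma$-conjugate of $L$, and only then invoking \cite[Theorem 2.2]{MoSh95}; note that the Mozes--Shah limit theorem is not quite ``equivalent'' here since the $\mu_k$ are not themselves $U$-invariant, which is why the linearization must be run with the varying $w_k$.
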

\begin{proof}
The existence of the measure $\mu$ follows as in the proof of \cite[Theorem 2]{DaMa93}. We just need to prove the statement about its $U$-ergodic components.

Take $\mcH$ to be the set of closed Lie subgroups $H$ of $G$ such that $H \cap \Gamma$ is a sublattice of $H$ and such that $\text{Ad}(H \cap \Gamma)$ is Zariski dense in $\text{Ad}(H)$. For each $H$ in $\mcH$, take $X(H)$ to be the set of $g$ such  that $gHg^{-1}$ contains $U$. Following \cite[Section 3]{DaMa93}, we may define a continuous linearization map
\[\eta_H: G \to V_H,\]
where $V_H$ is a finite-dimensional real vector space. Inside $V_H$, a Zariski closed subset $A_H$ may be defined so that $\eta_H^{-1}(A_H) = X(H)$ \cite[Proposition 3.2]{DaMa93}.

Suppose that the image of $X(H)$ in $G/\Gamma$ has positive measure under $\mu$. Then there is a compact subset $D$ of $A_H$ so that the image of $\eta_H^{-1}(D)$ in $G/\Gamma$ has positive measure $\delta$. We apply the theorem \cite[Theorem 7.3]{DaMa93} of Dani--Margulis to $E = \eta_H^{-1}(D)$ with $\epsilon = \delta/2$. This produces a sequence $H_1, \dots, H_k$ of groups in $\mcH$ and, for $i \le k$, a compact subset $D_i$ of $A_{H_i}$, that may be used to define
\[E' = \eta_{H_1}^{-1}(D_1) \cup \dots \cup \eta_{H_k}^{-1}(D_k).\]
Following the proof of Dani--Margulis, we see that we may assume that the $H_i$ are all contained in $H$.

Given any open subset $W$ of $V_H$ containing $D$, we see that
\[\text{meas}\left(\left\{t \in [0, T_i]\,:\,\, \exp(tw_i) \in \eta_H^{-1}(W)\Gamma\right\} \right) > \epsilon T_i\]
for all sufficiently large $i$ by \eqref{eq:weakstar}. Then the theorem of Dani--Margulis gives that, if we choose an open subset $W_j$ of $V_{H_j}$ containing $D_j$ for each $j \le k$, there is some $j \le k$ and some $\gamma \in \Gamma$ such that
\[\eta_{H_j}(\gamma) \in W_j.\]
Since the image $\eta_{H_j}(\Gamma)$ is discrete \cite[Theorem 3.4]{DaMa93}, this implies that there is some $\gamma$ in $\Gamma$ and some $j$ such that $\eta_{H_j}(\gamma)$ lies in $A_{H_j}$, so $\gamma$ lies in $X(H_j)$ and
\[\gamma H_j \gamma^{-1} \supseteq U.\]
Then $\gamma H_j \gamma^{-1}$ contains $L$ as well, so $\gamma H \gamma^{-1}$ contains $L$.

We have thus shown that the measure of the image of a set $X(H)$ in $G/\Gamma$ can have positive measure under $\mu$ only if $H$ contains some $\Gamma$-conjugate of $L$. The lemma now follows from \cite[Theorem 2.2]{MoSh95}.
\end{proof}

We now translate Theorem \ref{thm:unRatnered} into a statement about measures.
\begin{lem}
\label{lem:Haar}
With all notation as in Theorem \ref{thm:uniform_parabola_func}, choose $w \in K$, and take $U = \{\exp(tw)\,:\,\, t \in \R\}$. Take $L$ to be the minimal closed subgroup of $\SL_n(\R)$ containing $U$ such that $L \cap \SL_n(\Z)$ is a sublattice of $L$, and take $\mu$ to be an $gLg^{-1}$-invariant probability measure on $\SL_n(\R)/\SL_n(\Z)$ for some $g \in \SL_n(\R)$. Then, for any continuous function $f: \R^n \to \R^{\ge 0}$ with compact support,
\begin{equation}
\label{eq:Riesz}
\int_{\SL_n(\R) /\SL_n (\Z)} \sum_{v \in (\Z^n)_{\textup{prim}}} f(hv) d\mu(h) \,\ge\, \zeta(n)^{-1} \int_{\R^n}  f(x)dx,
\end{equation}
where the measure on $\R^n$ is the usual Euclidean measure.
\end{lem}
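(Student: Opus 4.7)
The plan is to identify the left side of \eqref{eq:Riesz} as an integral over $\R^n$ via Riesz representation, to use Theorem~\ref{thm:unRatnered} to pin down the resulting measure up to a scalar multiple of Lebesgue measure on a full-measure open set, and then to compute that scalar. First I reduce $\mu$ to an ergodic $gLg^{-1}$-invariant probability measure by ergodic decomposition. Since $gLg^{-1}$ contains the one-parameter unipotent subgroup $gUg^{-1}$, a combination of Ratner's measure classification and the Mozes--Shah theorem identifies any such ergodic $\mu$ as the Haar probability measure on a closed orbit $F \cdot y_0 \subseteq \SL_n(\R)/\SL_n(\Z)$, for some closed subgroup $F \supseteq gLg^{-1}$ and some $y_0 = h_0 \SL_n(\Z)$; here $\Gamma_F := F \cap h_0 \SL_n(\Z) h_0^{-1}$ is a lattice in $F$.

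Next I verify that $F$ acts transitively on $\R^n$ off a union of finitely many proper subspaces. Let $H$ denote the minimal $\Q$-algebraic subgroup of $\SL_n$ with $H(\R) \supseteq U$; by Borel--Harish-Chandra, $H(\Z)$ is an arithmetic lattice in $H(\R)$, and combining this with the minimality in the definition of $L$ and a standard Borel-density argument one deduces $L = H(\R)^0$, so that $F \supseteq gH(\R)^0 g^{-1}$. Since $w \in \msW_0$, no nonzero rational quadratic form is preserved by $H$, so Theorem~\ref{thm:unRatnered} yields a union $Z$ of finitely many proper subspaces of $\R^n$ such that $H(\R)^0$ acts transitively on $\R^n \setminus Z$, and hence $F$ acts transitively on $\R^n \setminus gZ$ after conjugation by $g$. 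Now the positive linear functional $f \mapsto \int_{\SL_n(\R)/\SL_n(\Z)} \sum_{v \in (\Z^n)_{\textup{prim}}} f(hv) \, d\mu(h)$ on $C_c^+(\R^n)$ is represented by a positive Radon measure $\nu$ on $\R^n$, which is $F$-invariant by the $F$-invariance of $\mu$. By the transitivity above, $\nu|_{\R^n \setminus gZ} = c \cdot dx$ for some $c \ge 0$; since $gZ$ is Lebesgue-null and $\nu \ge 0$, the inequality \eqref{eq:Riesz} will follow once $c \ge \zeta(n)^{-1}$ is established.

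To identify the scalar $c$, I test against $f = \mathbf{1}_{B_R}$ (or a continuous approximation) for a large ball $B_R \subset \R^n \setminus gZ$ of radius $R$. On one hand, the identification above gives $\int \sum_v f(hv) \, d\mu = c \cdot \vol(B_R)$; on the other hand, this integral equals the $\mu$-average of the number of primitive $v \in (\Z^n)_{\textup{prim}}$ with $hv \in B_R$. Because every $h \in \SL_n(\R)$ has $\det h = 1$, each lattice $h\Z^n$ has covolume one, and the classical asymptotic $\#\{v \in h\Z^n \cap B_R : v \text{ primitive}\} \sim \zeta(n)^{-1}\vol(B_R)$ holds as $R \to \infty$ for every fixed $h$. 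Passing this asymptotic to the $\mu$-average yields $c \cdot \vol(B_R) \sim \zeta(n)^{-1}\vol(B_R)$, so that $c = \zeta(n)^{-1}$. The main obstacle I anticipate lies in this last step: the pointwise lattice-point asymptotic must be integrated against $\mu$ with sufficient uniformity, which requires controlling the contribution from the cusp of $\SL_n(\R)/\SL_n(\Z)$ where short vectors in $h\Z^n$ can cause the primitive-vector count to be much larger than $\zeta(n)^{-1}\vol(B_R)$.
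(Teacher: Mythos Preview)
Your overall strategy matches the paper's: invoke Theorem~\ref{thm:unRatnered} to get transitivity on $\R^n\setminus Z$, use Riesz representation and uniqueness of invariant measure on a homogeneous space to identify the functional as $c$ times Lebesgue, and determine $c$ by a lattice-point count at large scale. A few points of divergence are worth noting.

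First, the detour through ergodic decomposition and Mozes--Shah to produce a bigger group $F$ is unnecessary. The measure $\mu$ is already assumed $gLg^{-1}$-invariant, and the paper shows directly (via \cite[Proposition~3.2]{Shah91}) that $L=H(\R)^0$, so $gLg^{-1}$ itself acts transitively on $Y:=\R^n\setminus gZ$ by Theorem~\ref{thm:unRatnered}. That is already enough to pin down the Radon measure on $Y$ up to a scalar.

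Second, you implicitly assume the functional is finite in order to invoke Riesz. The paper handles the alternative explicitly: if the left side of \eqref{eq:Riesz} is infinite for some $f_0$ supported in $Y$, then by transitivity it is infinite for every $f$ that is nonzero somewhere on $Y$, hence for every nonzero $f$ (since $Y$ is dense open), and the inequality is trivial.

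Third, your ``main obstacle'' is exactly the step the paper carries out, and the mechanism is simple once one remembers that only the \emph{lower} bound $c\ge\zeta(n)^{-1}$ is needed. Fix a nonzero $f\in C_c(Y)$ and consider the dilations $x\mapsto f(r^{-1}x)$; since $Z$ is a union of linear subspaces, the support stays inside $Y$ for all $r$. For each compact $J\subset\SL_n(\R)$ the primitive-point asymptotic holds uniformly for $g\in J$, so for any $\delta>0$ there is $r_0$ with
\[
\Big|\sum_{v\in(\Z^n)_{\mathrm{prim}}} f(r^{-1}gv)\;-\;\zeta(n)^{-1}\!\int_{\R^n} f(r^{-1}z)\,dz\Big|\le \delta r^n
\quad\text{for all }r>r_0,\ g\in J.
\]
Now choose $J$ so that its image in $\SL_n(\R)/\SL_n(\Z)$ has $\mu$-measure at least $1-\epsilon/2$; by nonnegativity of $f$ one may simply \emph{discard} the contribution from the complement of $J$, obtaining
\[
\int \sum_v f(r^{-1}hv)\,d\mu(h)\;\ge\;(1-\epsilon)\,\zeta(n)^{-1}\!\int_{\R^n} f(r^{-1}z)\,dz
\]
for large $r$, whence $c\ge 1-\epsilon$ for every $\epsilon>0$. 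This sidesteps the cusp entirely: no dominated-convergence or uniform-over-the-cusp estimate is required. Note that the paper does not claim $c=\zeta(n)^{-1}$, only $c\ge\zeta(n)^{-1}$; your phrasing aims for equality, which is both unnecessary and would require controlling the mass $\nu$ places on $gZ$.

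Finally, a small wrinkle in your test-function choice: a Euclidean ball $B_R$ cannot be kept inside $\R^n\setminus gZ$ as $R\to\infty$ unless its center runs off to infinity. The paper's use of dilations $f(r^{-1}\,\cdot\,)$ of a fixed $f$ with support in $Y$ avoids this, since $Y$ is a cone.
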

\begin{proof}
From \cite[Proposition 3.2]{Shah91}, we know that $L$ is $H(\R)^0$ for $H$ the minimal Zariski closed subgroup of $\SL_n(\Q)$ such that $H(\R)$ contains $U$. From Theorem \ref{thm:unRatnered}, we know that $gLg^{-1}$ acts transitively on a dense open subset $Y$ of $\R^n$.

First suppose that the left hand side of \eqref{eq:Riesz} is infinity for some  $f$ whose support is contained in $Y$, say $f = f_0$. There is then some $x \in Y$ in the support of $f_0$ such that the integral is infinite for any choice of $f$ which is nonzero at $x$. Since $gLg^{-1}$ is transitive on $Y$, the left hand side is infinite for any $f$ which is nonzero at some point in $Y$. This is the set of all nonzero $f$, giving the lemma in this case.

Otherwise, the functional taking $f$ to the left hand side of \eqref{eq:Riesz} for any $f \in C_c(Y)$ is given by a Radon measure on $Y$ by the Riesz representation theorem \cite[Theorem 7.2]{Foll99}. This is a $gLg^{-1}$-invariant measure on the homogeneous space $Y$, which uniquely determines the measure up to scaling \cite[Theorem 2.51]{Foll16}.  Since the restriction of the Euclidean measure on $\R^n$ to $Y$ is preserved by $gLg^{-1}$, we find that
\begin{equation}
\label{eq:Yc}
\int_{\SL_n(\R) /\SL_n(\Z)}\sum_{v \in (\Z^n)_{\textup{prim}}} f(hv) d\mu(h) = \frac{c}{\zeta(n)} \int_{R^n} f(z) dz
\end{equation}
holds for some nonnegative $c$ for any $f$ in $C_c(Y)$.

Fix some nonzero nonnegative $f$ in $C_c(Y)$. Then, given any compact subset $J$ of $\SL_n(\R)$ and any $\delta > 0$, we find there is some $r_0 > 0$ such that
\[\left|\sum_{v \in (\Z^n)^{\textup{prim}}}  f(r^{-1} gv) - \frac{1}{\zeta(n)}\int_{\R^n} f(r^{-1} z)dz\right| \le \delta r^n \quad\text{for all } r> r_0,\,\, g \in J.\]
Given any $\epsilon > 0$,  we may choose $J$ so that the image of $J$ in $\SL_n (\R)/\SL_n(\Z)$ has measure at least $1 - \epsilon/2$. In this way, we find that
\[\int_{\SL_n(\R) /\SL_n(\Z)}\sum_{v \in (\Z^n)_{\textup{prim}}} f(r^{-1}hv) d\mu(h)  \ge \frac{1 - \epsilon}{\zeta(n)} \int_{R^n} f(r^{-1}z) dz\]
for $r$ sufficiently large given $\epsilon$. This implies that $c \ge 1$ in \eqref{eq:Yc}.
\end{proof}

We now have everything we need to prove the theorem.

\begin{proof}[Proof of Theorem \ref{thm:uniform_parabola_func}]
Suppose the result does not hold for a given $K$ and $\epsilon > 0$. This implies that there is a sequence of tuples
\[(w_i,\, T_i, \,\beta_i, \,f_i)\quad\text{ for } i \ge 1\]
satisfying the conditions of the theorem and with $T_i > i$ such that the conclusion of the theorem does not hold.

Since $K$ is compact, some subsequence of the $w_i$ converges to some $w$ in $K$.  Passing to a subsequence of the $w_i$ if necessary and applying Lemma \ref{lem:DaniMarg}, we may assume there is a measure $\mu$ on $\SL_n(\R)/\SL_n (\Z)$ such that
\[\int_{\SL_n (\R)/\SL_n (\Z)} g d\mu = \lim_{i \to \infty}\frac{1}{(\beta_i - 1) T_i} \int^{\beta_i T_i}_{T_i} g(\exp(tw_{i})) dt\]
for all continuous functions $g: \SL^n \R/\SL^n \Z \to \R$ with compact support. Furthermore, defining $L$ as in the lemma, we find that $\mu$ has an ergodic decomposition into $gLg^{-1}$-invariant measures, where $g$ is allowed to vary in $\SL^n \R$.

Passing again to a subsequence, we may assume that the $f_i$ converge to a given continuous $f$ in $\mathscr{F}_{\epsilon}$. By Lemma \ref{lem:Haar}, we have
\[\int_{\SL_n (\R)/\SL_n (\Z)}\sum_{v \in (\Z^n)_{\text{prim}} } f(gv) d\mu(g) \,\ge\, \zeta(n)^{-1} \int_{\R^n} f(x)dx.\]
Take $F_1 \le F_2 \le \dots$ to be a sequence of continuous functions $F_i: \SL_n (\R)/\SL_n (\Z) \to \R^{\ge 0}$ with compact support such that $\lim_{i \to \infty} F_i(x) = 1$ for all $x \in \SL_n (\R)/\SL_n (\Z)$. For any $\delta> 0$, there is some $k \ge 0$ such that for $j \ge k$ we have
\[\int_{\SL_n (\R)/\SL_n (\Z)} \sum_{v \in (\Z^n)_{\text{prim}} }F_j(g) f(gv) d\mu(g) \,\ge\, \frac{1 - \delta}{\zeta(n)} \int_{\R^n} f(x)dx.\]
Then, for $j \ge k$,
\[\lim_{i \to \infty}\frac{1}{(\beta_i - 1) T_i} \int^{\beta_i T_i}_{T_i} \sum_{v \in (\Z^n)_{\text{prim}} }F_j(\exp(tw_i)) f_i(\exp(tw_i)v) dt \ge \frac{1 - \delta}{\zeta(n)} \int_{\R^n} f(x)dx.\]
But, for $\delta$ sufficiently small, this contradicts the assumption that the $(w_i, \, T_i,\, \beta_i, \, f_i)$ do not satisfy \eqref{eq:uniform_parabola_func}. This finishes the proof.
\end{proof}

\subsection{An effective version for $n = 3$}
\label{ssec:eff}
In the case $n = 3$, recent results of Lindenstrauss--Mohammadi--Wang--Yang \cite{Lind25eff} allow us to prove effective lower bounds for the number of points in a thickened parabola.

To start, given positive numbers $C$ and $T_0 > 10$, take $\msW(C, T_0)$ to be the set of $w \in \sl_3(\R)$ such that
\begin{itemize}
\item We have  $||w|| < C$, $||w^2|| > C^{-1}$, and $w^3 = 0$, and
\item For any nonzero integral quadratic form $P: \R^3 \to \R$ whose coefficients are bounded by $T_0$, we have
\begin{equation}
\label{eq:nottooclose}
|P(v)| \ge T_0^{-C},
\end{equation} 
where $v$ is a unit vector in $\Img\, w^2$.
\end{itemize}

\begin{thm}
Given $C > 0$, there is $c_1, C_1 > 0$ so that, for any $T_0 > 10$, any $w \in \msW(C, T_0)$, any $T > T_0^{C_1}$, and any function $f$ in $C^{\infty}_c(\SL_3(\R)/\SL_3(\Z))$, we have
\[\left|\frac{1}{T} \int_0^{T} f(\exp(tw)) dt - \int f d\mu\right| \le T_0^{-c_1}\cdot  \mathcal{S}(f),\]
 where $\mathcal{S}$ is a certain Sobolev norm and $\mu$ is the standard probability measure on $\SL_3(\R)/\SL_3(\Z)$.
\end{thm}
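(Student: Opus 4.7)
The plan is to apply the main effective equidistribution theorem of Lindenstrauss--Mohammadi--Wang--Yang \cite{Lind25eff}, which provides polynomial-rate effective equidistribution for one-parameter unipotent orbits in $\SL_3(\R)/\SL_3(\Z)$, conditional on the starting point (here the identity coset) satisfying a Diophantine condition at scale $T_0$ that rules out concentration near proper intermediate periodic orbits. The entire task then reduces to translating the concrete hypothesis defining $\msW(C, T_0)$ into this intrinsic Diophantine hypothesis, with polynomial dependence of constants.

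The first step is to classify the intermediate obstructions. Since $w^3 = 0$ and $w^2 \ne 0$, $w$ is regular nilpotent in $\sl_3$, and lies in some $\sl_2$-triple whose image acts on $\R^3$ via its unique irreducible three-dimensional representation. This representation preserves a unique (up to scalar) nondegenerate symmetric bilinear form $Q_w$. By the lattice characterization \cite[Proposition 3.2]{Shah91} together with the semisimple classification carried out in Section \ref{sec:semisimple}, every proper closed subgroup $L$ of $\SL_3(\R)$ containing $U = \exp(\R w)$ for which $L \cap \SL_3(\Z)$ is a lattice in $L$ must be the identity component of $\SO(P)(\R)$ for some rational quadratic form $P$ necessarily proportional to $Q_w$.

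The second step is to convert the abstract notion of proximity to a periodic $\SO(P)$-orbit into the explicit form of \eqref{eq:nottooclose}. If $v$ is a unit vector in $\Img w^2 = \ker w$, then $v$ is the highest weight vector for the $\sl_2$-triple containing $w$, and this line is automatically isotropic for the invariant form $Q_w$. Consequently, whenever $w \in \so(P)(\R)$ one has $P(v) = 0$; and conversely, a lower bound $|P(v)| \ge T_0^{-C}$ for every nonzero integral $P$ of coefficient-height at most $T_0$ translates, after normalization, into a polynomial-in-$T_0$ lower bound on the distance from $w$ to $\so(P)$ for all such $P$. Via the linearization machinery that underlies \cite{Lind25eff}, this distance bound implies non-concentration of the unipotent orbit $\{\exp(tw)\Gamma : 0 \le t \le T\}$ near any $\SO(P)$-periodic orbit of complexity $\le T_0^{C_1}$, which is exactly the input their theorem requires.

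The main obstacle will be matching the explicit form of the Diophantine hypothesis in \cite{Lind25eff}---phrased intrinsically in terms of Margulis functions, linearization maps, and the geometry of intermediate periodic orbits---with the concrete condition \eqref{eq:nottooclose} on a single null vector. Once this bookkeeping is done, the polynomial-rate conclusion is inherited directly from \cite{Lind25eff}, and the constants $c_1, C_1$ of the theorem are obtained by combining their constants with the polynomial loss incurred in the translation; no new dynamical input beyond \cite{Lind25eff} should be required.
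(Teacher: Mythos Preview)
Your overall strategy is the same as the paper's: verify the Diophantine hypothesis of \cite[Theorem 1.2]{Lind25eff} by showing that the condition \eqref{eq:nottooclose} rules out concentration near every relevant intermediate periodic orbit. The gap is in your first step, the classification of intermediate subgroups. You claim that every proper closed $L \subset \SL_3(\R)$ containing $U$ with $L \cap \SL_3(\Z)$ a lattice must be $\SO(P)(\R)^0$, citing Section \ref{sec:semisimple}. But Theorem \ref{thm:semisimple} explicitly assumes $\mfh$ is semisimple, and that hypothesis is not automatic here: the unipotent radical of a rational Borel subgroup of $\SL_3$ is a three-dimensional nilpotent group that can contain $U$ and does carry a lattice. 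So your list of obstructions is incomplete, and Section \ref{sec:semisimple} cannot by itself justify it. More broadly, the Diophantine input needed by \cite{Lind25eff} is not only about groups containing $U$ exactly; one must control the orbit's interaction with all periodic $H$-orbits of small complexity via the linearization maps $\eta_H$.

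The paper's proof avoids any general classification and instead works directly with the specific family $\mathcal{H}$ that appears in the proof of \cite[Theorem 1.2]{Lind25eff}, reducing to two cases: $H$ a principal $\SL_2$ image, and $H$ the unipotent radical of a parabolic (which, after noting it must contain a conjugate of $\exp(w)$, is forced to be a Borel radical). The key quantitative step you gesture at but do not make explicit is supplied by \cite{Bron88}: if $\max_{t \in [0,S]} \|\eta_H(\exp(tw))\|$ grows slowly, then $w$ is polynomially close to a nilpotent in $\mfh(H)(\C)$, where $\mfh(H)$ is the Lie algebra of the unipotent-generated part of $N_{\SL_3(\R)}(H)$. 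The paper then observes that in both cases, provided $\|\eta_H(\text{Id})\| \le T_0^{c_4}$, this $\mfh(H)$ preserves a nonzero integral quadratic form of height $\le T_0$ (in the Borel-radical case this is a degenerate form, the square of the linear functional cutting out the invariant plane). Since any nilpotent $w'$ preserving such a $P$ has $P(v') = 0$ for $v'$ spanning $\Img (w')^2$, closeness of $w$ to $w'$ forces $|P(v)|$ small, contradicting \eqref{eq:nottooclose}. Your proposal would go through once you add the Borel-radical case and make the \cite{Bron88} step explicit; as written, the omission of the non-semisimple obstruction is a genuine gap.
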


\begin{proof}
Take $\mathcal{H}$ to be the set of connected rational subgroups of $\SL_3(\R)$ whose radical equals their unipotent radical. Given $H$ in $\mathcal{H}$,  define $||\cdot ||$ and $\eta_H: \SL_3(\R) \to \wedge^{\dim H} \sl_3(\R)$ as in \cite[Section 1.2]{Lind24qual}.  Furthermore, take $\mfh(H)$ to be the Lie algebra associated to the maximal subgroup of $N_{\SL_3(\R)}(H)$ generated by unipotent elements.

Then there is an absolute constant $C_2 > 0$ such that, if
\[\max_{t \in [0, S]} ||\eta_H\left(\exp(tw) \right)|| \le ||\eta_H(0)||^{-C_2} S^{1/2}\]
for a given $w \in \msW(C, T_0)$ and $S > 1$, we may conclude that $w$ differs from a nilpotent element in $\mfh(H)(\C)$ by an element of $\sl_3(\C)$ bounded in magnitude by $S^{-1/C_2}$ \cite{Bron88}. So, if $\mfh(H)$ preserves a nonzero integral quadratic form with coefficients bounded by $T_0$, we must have
\[\max_{t \in [0, T]} ||\eta_H\left(\exp(tw) \right)|| \ge T^{1/3}.\]
for all $T$ larger than some $T_0^{C_3}$, where $C_3 > 0$ depends only on $C$.

We now wish to prove that the first condition of \cite[Theorem 1.2]{Lind25eff} holds for our given choice of $w$. From the proof of this theorem, we see that this will follow if we can show that, for some $c_4 > 0$, we have
\begin{equation}
\label{eq:glubb}
\max_{t \in [0, T]} ||\eta_H\left(\exp(tw) \right)|| \ge T^{1/3}
\end{equation}
for $T > T_0^{C_3}$ whenever $H(\R)$ is one of
\begin{itemize}
\item A principal image of $\SL_2(\R)$ in $\SL_3(\R)$, or
\item The unipotent radical of a parabolic subgroup
\end{itemize}
and $||\eta_H(0) || \le T_0^{c_4}$. 

In the second case, the authors show that $H$ may be assumed to contain a conjugate of $\exp(w)$. This implies that it is the unipotent radical of a Borel subgroup.

In both cases, so long as $c_4$ is sufficiently small, we find that we may assume that $\mfh(H)$ preserves a nonzero integral quadratic form whose coefficients are bounded by $T_0$. With our work above, this implies that \eqref{eq:glubb} holds.
\end{proof}

Following the argument from earlier in this section then gives the following:
\begin{thm}
\label{thm:R3_thickened}
Choose $C > 0$, $\delta > 0$, and $T_0 > 10$. Then there is $c_1, C_1 > 0$ determined from $C$ and $\delta$ so that, given any $w \in \mathscr{W}(C, T_0)$, any $T > T_0^{C_1}$, any $\beta > 1 + \delta$, and any ball $B$ in $\{v \in \R^3\,|\,\, |v| \le C\}$  whose radius is at least $T_0^{-c_1}$, we have
\[\#\left( U_{[T, \beta T]} B \cap (\Z^3)_{\textup{prim}}\right) \ge (1 - \delta) \cdot \zeta(3)^{-1} \cdot  \vol\, U_{[T, \beta T]} B.\]
\end{thm}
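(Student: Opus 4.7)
The plan is to rerun the proof of Theorem~\ref{thm:uniform_parabola_func}, replacing its qualitative measure-classification input (Lemma~\ref{lem:DaniMarg} combined with Lemma~\ref{lem:Haar}) by the effective equidistribution theorem proved just above. Write $c_*,C_*>0$ for the constants produced by that theorem from the input $C$; all error terms will be chosen in terms of $c_*,C_*,C,\delta$ to yield the final constants $c_1,C_1$.

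First I reduce to an integral statement, as in the author's passage from Theorem~\ref{thm:uniform_parabola_func} to Theorem~\ref{thm:uniform_parabola}. Choose a smooth nonnegative bump $F\colon\R^3\to[0,1]$ supported in $B$ and equal to $1$ on a concentric sub-ball of volume at least $(1-\delta/10)\vol B$; since $B$ has radius at least $T_0^{-c_1}$, we may take $\textup{Lip}(F)\le T_0^{2c_1}$. It then suffices to prove
\[\int_T^{\beta T}\!\!\sum_{v\in (\Z^3)_{\textup{prim}}}\! F\bigl(\exp(tw)v\bigr)\,dt \;\ge\; (1-\delta/2)\,\zeta(3)^{-1}(\beta-1)T\!\int_{\R^3}\! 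F(x)\,dx,\]
since the left side bounds the lattice-point count from below by exactly the book-keeping used in the deduction following Theorem~\ref{thm:uniform_parabola_func}.

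Next, form the Siegel transform $\widehat F(g):=\sum_{v\in(\Z^3)_{\textup{prim}}}F(gv)$ on $\SL_3(\R)/\SL_3(\Z)$. This is not compactly supported, so multiply it by a smooth height cutoff $\chi_R$ vanishing on the cusp where the first successive minimum of the lattice falls below $R^{-1}$. Standard quantitative non-divergence for unipotent flows controls the flow-average of $(1-\chi_R)\widehat F$ by $O(R^{-3})$, while Siegel's mean value formula (the specialisation of Lemma~\ref{lem:Haar} to Haar $\mu$) gives $\int\widehat F\,d\mu=\zeta(3)^{-1}\int F$. The cutoff height $R$ and the Lipschitz size $T_0^{2c_1}$ of $F$ each contribute only polynomially to the relevant Sobolev norm, so $\mathcal{S}(\chi_R\widehat F)\le R^{a}T_0^{bc_1}$ for absolute $a,b>0$. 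Applying the effective equidistribution theorem to $\chi_R\widehat F$ on $[0,T]$ and $[0,\beta T]$ (both of length at least $T_0^{C_*}$ once $C_1\ge C_*$) and subtracting yields
\[\int_T^{\beta T}\!\chi_R\bigl(\exp(tw)\bigr)\widehat F\bigl(\exp(tw)\bigr)\,dt \;=\; (\beta-1)T\,\zeta(3)^{-1}\!\int\! F \;+\; O\!\Bigl(T\bigl(R^{-3}+T_0^{-c_*}R^{a}T_0^{bc_1}\bigr)\Bigr).\]
Picking $R:=T_0^{c_*/(6a)}$ and then choosing $c_1$ small and $C_1$ large in terms of $c_*,C_*,C,\delta$, both error terms drop below $(\delta/2)\zeta(3)^{-1}(\beta-1)T\!\int F$, completing the proof.

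The main obstacle is this last step: one must quantify precisely how $\mathcal{S}(\chi_R\widehat F)$ scales with $R$ and with the Lipschitz constant $T_0^{2c_1}$ of $F$, and then choose $c_1$ small enough that the fixed polynomial saving $T_0^{-c_*}$ of the equidistribution theorem defeats both the $R^aT_0^{bc_1}$ blow-up of the Sobolev norm and the $R^{-3}$ non-divergence tail simultaneously. The fact that $c_1$ and $C_1$ in the target theorem may depend on $\delta$, whereas $c_*$ and $C_*$ depend only on $C$, is precisely the slack that permits this balancing.
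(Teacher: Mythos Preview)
Your proposal is correct and takes essentially the same approach that the paper sketches: the paper simply writes ``Following the argument from earlier in this section then gives the following'' before stating the theorem, meaning one reruns the proof of Theorem~\ref{thm:uniform_parabola_func} with the effective equidistribution theorem of Lindenstrauss--Mohammadi--Wang--Yang replacing the soft Ratner/Dani--Margulis input. Your outline (smooth bump on $B$, Siegel transform, height cutoff $\chi_R$, quantitative non-divergence for the tail, then balancing $R$ against the Sobolev norm and the $T_0^{-c_*}$ saving) is exactly the standard way to make that translation precise, and you correctly flag the parameter-balancing in the last paragraph as the only place requiring care.
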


\section{Local approximation with quadrics}
\label{sec:hype}
The goal of this section is to decompose cones over thickened hypersurfaces into disjoint thickened parabolas with negligible remainder. This will allow us to prove Theorem \ref{thm:main} as a consequence of our uniform estimate for the number of lattice points in a thickened parabola from last section.

To do this work, we first show that local patches of quadric hypersurfaces may be decomposed into thickened parabolas. This work will also allow us to prove the local form of Oppenheim's conjecture given as Theorem \ref{thm:localOp}.

\subsection{Rational points near patches of quadric hypersurfaces}
\label{ssec:quadrics}
Our first result is the following:
\begin{thm}
\label{thm:locOp_precise}
Choose $n \ge 3$ and an indefinite quadratic from $R$ on $\R^n$. Choose a basis $\zz, \ee, \ee_3, \dots, \ee_n$ such that
\[R(\zz + t \ee) = Bt^2\]
for some fixed nonzero constant $B$. Taking $A_1, \dots, A_{n-2}$ to be a basis for the linear functions on $\R^n$ that vanish on $\zz$ and $\ee$, we assume no nonzero polynomial of the form
\begin{equation}
\label{eq:pencil}
aR + \sum_{i \le j \le n-2} a_{ij} A_iA_j
\end{equation}
is rational. We also assume that there is some $\vv$ in $\R^n$ such that 
\begin{equation}
\label{eq:nonsingular_alg}
R(\zz + t\vv) = ct + dt^2\quad\text{for some nonzero } c.
\end{equation}
For $i$ satisfying $0 \le i \le n$, choose real numbers $a_i, b_i$ such that $a_i < b_i$. 

Then, given $\epsilon > 0$, there is some $Q_0 > 0$ such that, for $Q > Q_0$, if we take $\mathcal{Q}$ to be the set of points in $\R^n$ of the form
\begin{equation}
\label{eq:xx_defn}
 \xx =  q (\zz + c_2 \ee + c_3 \ee_3 + \dots + c_n \ee_n)
\end{equation}
such that
\begin{equation}
\label{eq:xx_req}
\left( R(\xx), \,\,qQ^{-1},\,\, c_2Q^{1/2},\, \,c_3Q,\,  \dots,\, \,c_n Q\right) \in  [a_0, b_0] \times \dots \times [a_n, b_n],
\end{equation}
we have
\[\# \left((\Z^n)_{\textup{prim}} \cap \mathcal{Q}\right) \,\ge \,  (1 - \epsilon) \zeta(n)^{-1} \cdot \vol\, \mathcal{Q}.\]
\end{thm}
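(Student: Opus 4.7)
The strategy is to realize $\mathcal{Q}$ (up to negligible error) as a thickened parabola in the sense of Notation \ref{notat:uniform_parabola} and then invoke Theorem \ref{thm:uniform_parabola}. Write $\langle \cdot, \cdot\rangle_R$ for the symmetric bilinear form polarizing $R$. The condition $R(\zz + t\ee) = Bt^2$ forces $R(\zz) = 0$, $\langle \zz, \ee\rangle_R = 0$, and $R(\ee) = B$, while assumption \eqref{eq:nonsingular_alg} provides a vector with $\langle \zz, \cdot\rangle_R \ne 0$. Subtracting a multiple of $\ee$ and rescaling produces $v_1 \in \R^n$ with $\langle v_1, \ee\rangle_R = 0$ and $\langle v_1, \zz\rangle_R = -B$. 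Set $W = \{v : \langle v, v_1\rangle_R = \langle v, \ee\rangle_R = \langle v, \zz\rangle_R = 0\}$; the three functionals are linearly independent, so $\dim W = n - 3$ and $W$ is complementary to $\langle v_1, \ee, \zz\rangle$. Define $w \in \sl_n(\R)$ by the Jordan chain $v_1 \mapsto \ee \mapsto \zz \mapsto 0$ and $w|_W = 0$. A check of $\langle wu, v\rangle_R + \langle u, wv\rangle_R = 0$ on basis pairs shows $w \in \so(R)$, so $U = \{\exp(tw)\}$ preserves $R$; also $w$ has rank $2$ with $w^2 v_1 = \zz \ne 0$, placing $w \in \msW$.

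To verify $w \in \msW_0$, one computes that the linear condition $w^T M + M w = 0$ on a symmetric matrix $M$ is equivalent, in the basis $v_1, \ee, \zz, e_4, \dots, e_n$ (with $e_4, \dots, e_n$ a basis of $W$), to $M_{v_1, \ee} = M_{\ee, \zz} = M_{\zz, \zz} = 0$, $M_{v_1, \zz} = -M_{\ee, \ee}$, and $M_{\ee, e_k} = M_{\zz, e_k} = 0$. The space of such $M$, equivalently the space of quadratic forms preserved by $\exp(tw)$ for all $t$, has dimension $(n^2 - 3n + 4)/2$. The pencil $\{aR + \sum_{i \le j} a_{ij} A_i A_j\}$ has dimension $1 + (n-2)(n-1)/2 = (n^2 - 3n + 4)/2$ and is contained in this space: $R$ is preserved because $w \in \so(R)$, and each $A_i A_j$ is preserved because $\Img(w) = \langle \ee, \zz\rangle$ sits inside $\ker A_k$ for every $k$, forcing $A_k \circ w = 0$ and hence $A_k \circ \exp(tw) = A_k$. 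Equality of dimensions forces equality of the two spaces, and the pencil hypothesis then gives $w \in \msW_0$.

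Next, parameterize $\mathcal{Q}$. After expanding each $\ee_i$ in the basis $v_1, \ee, \zz, e_k$, any $\xx \in \mathcal{Q}$ has coordinates $(s, r, p, \xi)$ satisfying $s \sim 1$, $r \sim Q^{1/2}$, $p \sim Q$, $\xi \sim 1$. When $s \ne 0$, the equation $\xx = \exp(tw) b$ admits a unique solution $b = sv_1 + \beta \zz + \eta$ with $\eta \in W$, $t = r/s \sim Q^{1/2}$, and $\beta = p - r^2/(2s)$. A priori $\beta$ could be of order $Q$, but because $R$ is $U$-invariant one has $R(\xx) = R(b) = s^2 R(v_1) - 2sB\beta + R(\eta) + 2s\langle v_1, \eta\rangle_R$, so solving for $\beta$ under $R(\xx) \in [a_0, b_0]$ confines $\beta$ to an interval of length $O(1)$. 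Thus $b$ lies in a bounded open set $\Omega_Q \subset L_w$ that converges as $Q \to \infty$ to a fixed set $\Omega^*$, while for each $b$ the parameter $t$ ranges over an interval with endpoints $c_\pm(b) Q^{1/2}$ depending continuously on $b$, with $c_-(b) < c_+(b)$ bounded away from each other.

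To apply Theorem \ref{thm:uniform_parabola}, partition $\Omega^*$ into finitely many subregions on which $s(b)$ and $c_\pm(b)$ each vary within intervals of length at most $\epsilon^3$. On each subregion, the corresponding slab of $\mathcal{Q}$ contains a thickened parabola $U_{[T_j(Q),\, \beta_j T_j(Q)]} \Omega_j$ with $T_j(Q) = \tilde c_j Q^{1/2}$ and $\beta_j \ge 1 + \epsilon$. For $Q$ sufficiently large that $T_j(Q)$ exceeds the threshold $T_0$ produced by Theorem \ref{thm:uniform_parabola} with $K = \{w\}$ and tolerance $\epsilon/2$, the theorem yields $\#\bigl((\Z^n)_{\textup{prim}} \cap U_{[T_j,\, \beta_j T_j]} \Omega_j\bigr) \ge (1 - \epsilon/2)\zeta(n)^{-1} \vol\,U_{[T_j,\, \beta_j T_j]} \Omega_j'$ for the corresponding inner set $\Omega_j'$. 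Summing over $j$ and comparing the total volume with $\vol\,\mathcal{Q}$ via the Jacobian $|s|$ of $(t, b) \mapsto u_t b$ on $\R \times L_w$ gives the stated bound. The principal technical step is controlling the volume loss from the partition and the inner-outer buffer so that it remains $O(\epsilon)$; this is routine because all the relevant geometry converges as $Q \to \infty$ to a $Q$-independent picture.
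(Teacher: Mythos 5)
Your proposal follows the same overall strategy as the paper---construct a one-parameter unipotent $U \subset \textup{SO}(R)$ with $\exp(w) - \textup{Id}$ of rank $2$, show $w \in \msW_0$, slice $\mathcal{Q}$ into thickened parabolas, apply Theorem~\ref{thm:uniform_parabola}---but it implements two of the steps differently. First, the paper's $w$ is defined by the explicit formula $\zz \mapsto 0$, $\ee \mapsto \langle \ee, \ee\rangle \zz$, $\ee_i \mapsto \langle \ee_i, \ee\rangle \zz - \langle \ee_i, \zz\rangle \ee$, whereas you build a Jordan chain $v_1 \mapsto \ee \mapsto \zz$ after constructing $v_1$ with $\langle v_1, \ee\rangle_R = 0$, $\langle v_1, \zz\rangle_R = -B$ and taking $W$ to be the $R$-orthogonal complement. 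The two $w$'s differ (different kernels), but both lie in $\so(R) \cap \msW$, so either works. Second, where the paper shows directly, by a short computation with $\langle\cdot,\cdot\rangle_P$, that any $w$-invariant quadratic form $P$ lies in the pencil, you instead count linear conditions from $w^\top M + Mw = 0$ to get that the invariant space has dimension $(n^2 - 3n + 4)/2$, observe the pencil is contained in it and has the same dimension, and conclude by equality. Both are correct; your count is slick, but it needs the a priori observation that $R$ and the $A_iA_j$ are linearly independent, which is immediate since $R(\zz + t\ee) = Bt^2 \ne 0$ while every $A_iA_j$ vanishes on $\langle \zz, \ee\rangle$. One cosmetic imprecision: $b = sv_1 + \beta\zz + \eta$ generally does \emph{not} lie in $L_w = \Img(w)^{\perp} \oplus \Img(w^2)$ (the orthogonal complement there is the Euclidean one, not the $R$-one), so to apply Theorem~\ref{thm:uniform_parabola} literally you should push your transversal onto $L_w$ along the flow; this is a smooth bijective reparameterization that does not affect volumes or the final inequality.
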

We note that this theorem directly implies Theorem \ref{thm:localOp}.

For our work on hypersurfaces, we will also need a uniform variant of this theorem, which we will prove first.

\begin{thm}
\label{thm:locOp_uniform}
Fix $n \ge 3$ and a compact subset $K$ of $\R^n$ such that, for every nonzero integer quadratic form $P$ on $\R^n$, we have $P(\zz) \ne 0$ for all $\zz$ in $K$. Also fix some $C > 0$ and $\epsilon > 0$. Then there is some positive $Q_0$ so we have the following:

Choose an indefinite quadratic form $R$ on $\R^n$ and a basis $\zz, \ee, \ee_3, \dots, \ee_{n}$ of unit vectors for $\R^n$, with $\zz$ lying in $K$. We assume that the parallelepiped
\[\left\{c_1 \zz + c_2 \ee + c_3 \ee_3 + \dots + c_n \ee_n\,:\,\, |c_i| \le C\,\text{ for } \,1 \le i \le n\right\}\]
contains all unit vectors in $\R^n$.  We also assume that there is some constant $A$ satisfying $|A| \ge C^{-1}$ such that
\[R(\zz + t \ee) = At^2 \quad\text{for all } \, t \in \R.\]
Finally, we assume that, for some $i \ge 3$, we have
\[\left| R(\zz + \ee_i) - R(\zz) - R(\ee_i)\right| \ge C^{-1}.\]

For $0 \le i \le n$, choose real numbers $b_i \ge a_i$ of magnitude at most $C$. Choose $Q > Q_0$, and define $\mathcal{Q}$ to be the set of points of the form \eqref{eq:xx_defn} such that  \eqref{eq:xx_req} holds. Then
\[\# \left((\Z^n)_{\textup{prim}} \cap \mathcal{Q}\right) \,\ge \,  - \epsilon Q^{1/2} \,+\, \zeta(n)^{-1} \cdot \vol\, \mathcal{Q}.\]
\end{thm}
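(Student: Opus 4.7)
The plan is to cover $\mathcal{Q}$ with thickened parabolas generated by a single $1$-parameter unipotent subgroup $U = \{\exp(\tau w)\} \subset \SL_n(\R)$ constructed from $(R, \zz, \ee, \{\ee_i\})$, and apply the uniform counting result Theorem~\ref{thm:uniform_parabola_func} to each piece.

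First I would define $w \in \sl_n(\R)$ in the given basis by $w(\zz) = 0$, $w(\ee) = \zz$, and $w(\ee_i) = -A^{-1}R(\zz,\ee_i)\,\ee + A^{-1}R(\ee,\ee_i)\,\zz$ for $i \ge 3$. A direct computation shows $w$ is nilpotent with $w^3 = 0$ and trace zero, has rank two, and has $w^2 \ne 0$, the last using the nondegeneracy hypothesis $|R(\zz + \ee_i) - R(\zz) - R(\ee_i)| \ge C^{-1}$ for some $i$. Setting $\alpha(\mathbf{t}) = -A^{-1}\sum t_i R(\zz,\ee_i)$ and $\delta(\mathbf{t}) = A^{-1}\sum t_i R(\ee,\ee_i)$, the orbit of a base vector $v = \xi\ee + \sum t_i\ee_i$ is
\[
\exp(\tau w)(v) = \bigl(\tau(\xi + \delta(\mathbf{t})) + \tfrac{\tau^2}{2}\alpha(\mathbf{t})\bigr)\zz + \bigl(\xi + \tau\alpha(\mathbf{t})\bigr)\ee + \sum t_i\ee_i,
\]
a genuine parabola when $\alpha(\mathbf{t}) \ne 0$, and a direct expansion yields $R(\exp(\tau w)(v)) = R(v)$ identically in $\tau$, so $\exp(\tau w) \in O(R)$. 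Comparing with $\xx = q\zz + qc_2\ee + \sum qc_i\ee_i$ gives the identifications $t_i = qc_i$, $qc_2 = \xi + \tau\alpha(\mathbf{t})$, $q = \tau(\xi + \delta(\mathbf{t})) + \tfrac{\tau^2}{2}\alpha(\mathbf{t})$; the hypotheses force the scaling $\tau \sim Q^{1/2}$ with $\xi, t_i \sim 1$. The Jacobian of $(\tau, \xi, \mathbf{t}) \mapsto \xx$ simplifies remarkably to $\xi + \delta(\mathbf{t})$, independent of $\tau$, bounded away from zero outside a codimension-one stratum whose total $Q^{1/2}$-contribution is negligible.

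Next I would verify that the set $\mathcal{K}$ of all $w$ constructed from admissible data is a compact subset of $\msW_0$. Boundedness in $\sl_n(\R)$ follows from $\|R\|_\infty \le C$, $|A| \ge C^{-1}$, $\zz \in K$, and unit basis vectors; closedness follows from continuity of the construction. The essential containment $\mathcal{K} \subset \msW_0$ is proven as follows. Assume $\exp(w)$ preserves a primitive integer quadratic form $P$; expanding $P(\exp(\tau w)\ee_i) = P(\ee_i)$ as a polynomial in $\tau$, the $\tau^4$ coefficient equals $\tfrac{1}{4A^2} R(\zz,\ee_i)^2 \, P(\zz)$, which must vanish. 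Choosing $i$ so that $R(\zz,\ee_i) \ne 0$ forces $P(\zz) = 0$, contradicting the assumption that no nonzero integer quadratic form vanishes on $K$.

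Finally I would partition the bounded admissible region in $(\xi, \mathbf{t})$-space into small boxes $B_0$ of diameter $\eta$; for each, the base set $B = \{\xi\ee + \sum t_i\ee_i : (\xi, \mathbf{t}) \in B_0\}$ together with an interval $[T_{B_0}, \beta_{B_0}T_{B_0}]$ (chosen so that $qQ^{-1}$ traces out $[a_1, b_1]$, with $\beta_{B_0}$ bounded below by a constant greater than one) determines a thickened parabola. For any $\epsilon' > 0$, the uniformity of Theorem~\ref{thm:uniform_parabola_func} applied to the compact $\mathcal{K} \subset \msW_0$ gives a single threshold $T_0$ valid across all these parabolas; since each $T_{B_0} \sim Q^{1/2}$, the theorem applies uniformly once $Q > Q_0$ with $Q_0$ comparable to $T_0^2$. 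Summing the resulting lower bounds over the partition, converting from time-integrated estimates to primitive-point counts via the equivalence with Theorem~\ref{thm:uniform_parabola}, and controlling boundary regions (both where $|\xi + \delta|$ is small and where the $\tau$-interval degenerates) by an error of size $\ll \eta Q^{1/2}$, yields the claimed inequality upon taking $\eta$ and $\epsilon'$ small in terms of $\epsilon$. The main obstacle is the step showing $\mathcal{K} \subset \msW_0$ uniformly: $\mathcal{K}$ is trivially compact in $\msW$, but $\msW_0$ is not a closed subset, so the pointwise-in-$K$ hypothesis must be upgraded to a uniform statement via the $\tau^4$-coefficient algebraic obstruction.
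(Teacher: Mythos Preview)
Your approach is the paper's: construct the same nilpotent $w$ (yours is the paper's scaled by $(2A)^{-1}$), show $w\in\msW_0$ because any preserved integral form must vanish on $\Img(w^2)=\langle\zz\rangle$ (your $\tau^4$-coefficient computation is exactly this observation), then tile a transversal to the $U$-flow into small boxes and apply Theorem~\ref{thm:uniform_parabola} (equivalently Theorem~\ref{thm:uniform_parabola_func}) to each. The only differences are cosmetic: you slice along $\mathrm{span}(\ee,\ee_3,\dots,\ee_n)$ rather than the paper's $L_w$ (so each orbit meets your transversal twice, harmless once you fix a branch), and the ``main obstacle'' you flag at the end is not one---the pointwise $\tau^4$ argument already gives $\mathcal{K}\subset\msW_0$, while boundedness of $\mathcal{K}$ (which both you and the paper treat informally, tacitly assuming a bound on $R$ not present in the hypotheses) is a separate and routine matter.
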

\begin{proof}
We fix $(n, K, C, \epsilon)$ as in the theorem statement throughout this proof. 

Given $R$, $\zz$, and $\ee$ as in the theorem statement, take $\langle\,\,,\,\, \rangle$ to be the quadratic form associated to $R$, so 
\[\langle \vv, \uu \rangle = R(\vv + \uu) - R(\vv) - R(\uu)\quad\text{for all}\quad\vv, \uu \in \R^n.\]
 We define an element  $w$ of $\sl_n (\R)$ associated to $(R, \zz, \ee)$ by
\[\zz \mapsto 0, \qquad \ee \mapsto  \langle \ee, \ee \rangle \zz,\qquad \ee_i \mapsto \langle \ee_i, \ee \rangle \zz - \langle \ee_i, \zz \rangle \ee \,\,\text{ for } i \ge 3.\]
With this definition, we may check that $\langle w \vv, \vv \rangle = 0$ for all $\vv \in \R^n$. This implies that the $1$-parameter unipotent subgroup $U = \{\exp(tw)\,:\, t \in \R\}$ preserves the quadratic form $R$.

As $(R, \zz, \ee)$ varies, we find that the associated elements $w$ lie in a compact subset of the set $\mathscr{W}_0$ defined in Notation \ref{notat:uniform_parabola}, as any nonzero quadratic form preserved by $U$ must be $0$ on $\zz$ and is hence irrational.  This will allow us to apply  Theorem \ref{thm:uniform_parabola} uniformly to all possible associated $w$.

Define the hyperplane $L_w$ of $\R^n$ as in Notation \ref{notat:uniform_parabola}. For $\vv$ in $L_w$, we take $I(Q, \vv)$ to be the set of real $t$ such that $\exp(tw)\vv$ lies in $\mathcal{Q}$ and $\mathcal{Q}(Q, \vv)$ to be the associated subset of $\mathcal{Q}$. There is some $C_0 > 0$ depending only on $(n, K, C, \epsilon)$ such that, if we take 
\[L_{0w} = \big\{ \vv \in L_w\,:\,\, \emptyset \ne I(Q, \vv) \subseteq\left [-C_0 Q^{1/2}, \,C_0 Q^{1/2}\right]\big\}, \]
and if we take $\mathcal{Q}_0$ to be the union of the $\mathcal{Q}(Q, \vv)$ over the $\vv$ in $L_{0w}$, we have
\[\vol\, \mathcal{Q} \backslash \mathcal{Q}_0 \le \tfrac{1}{2} \epsilon Q^{1/2}.\]
For $\vv$ in $L_{w}$, $I(Q, \vv)$ is the union of at most two closed intervals. Given $c > 0$, take $I_1(Q, \vv)$ to be the intersection of the $I(Q, \vv_1)$ taken over all $\vv_1 \in L_w$ within a distance of $c$ of $\vv$, and take $\mathcal{Q}_1(Q, \vv)$ to be the associated subset of $\mathcal{Q}(Q, \vv)$.  If $\mathcal{Q}_1$ is taken to be the union of the $\mathcal{Q}_1(Q, \vv)$ over $L_{0w}$, we may choose $c > 0$ sufficiently small given just $(n, K, C, \epsilon, C_0)$ so that
\[\vol\, \mathcal{Q}_0 \backslash \mathcal{Q}_1 \le \tfrac{1}{4} \epsilon Q^{1/2}.\]
Take $B_0$ to be  a closed solid hypercube in $L_w$ of sidelength $(n-1)^{-1/2}c$. We may decompose $L_w$ as a union of translates of $B_0$ subject to the restriction that any two distinct translates in the decomposition meet only at their boundary. Further, for each translated hypeprcube $B$, if we take $I$ to be the intersection of the $I(Q, \vv)$ over the $\vv$ in $B$, we find that $U_I B$ is contained in $\mathcal{Q}$ and contains the portion of $\mathcal{Q}_1$ over $B$. By applying Theorem \ref{thm:uniform_parabola}, we find for any $\epsilon_1 > 0$ that
\[\#(U_IB^{\circ} \cap (\Z^n)_{\text{prim}}) \ge -\epsilon_1 Q^{1/2} + \zeta(n)^{-1} \cdot \vol\, U_IB\]
so long as $Q$ is sufficiently large given $(n, K, C, \epsilon, \epsilon_1, c)$, where $B^{\circ}$ denotes the interior of $B$. The result follows by summing over the translates of $B$ containing some point in $L_{0w}$.
\end{proof}

\begin{proof}[Proof of Theorem \ref{thm:locOp_precise}]
We define $w$ as in the proof of Theorem \ref{thm:locOp_uniform}. 

Suppose $P$ is a nonzero quadratic form on $\R^n$ preserved by $w$. We claim that it takes the form \eqref{eq:pencil}.

Take $\langle\,\,,\,\, \rangle_P$ and $\langle\,\,,\,\, \rangle_R$ to be the quadratic forms associated to $P$ and $R$. By adjusting $P$ by some multiple of $R$, we may assume that $\langle\ee, \ee \rangle_P = 0$.

 We have $\langle \vv, w\vv \rangle_P = 0$ for all $\vv$, so $\langle \vv_1, w\vv_2 \rangle_P + \langle \vv_2, w\vv_1 \rangle_P = 0$ for all $\vv_1, \vv_2$. Applying the former to $\vv = \ee$ and the latter to $\vv_1 = \ee$ and $\vv_2 = \zz$ gives
\[ \langle \zz, \zz \rangle_P =  \langle \zz, \ee \rangle_P = 0.\]
Applying the latter now to arbitrary $\vv_1$ and $\vv_2 = \ee$ gives
\[\langle \vv, \zz \rangle_P = 0.\]
If $\vv$ is in the span of $\ee_3, \dots, \ee_n$, we then have
\[0 = \langle \vv,  w \ee_v \rangle_P  = -\langle \vv , \zz\rangle_R \cdot  \langle \vv, \ee\rangle_P.\]
We also note that $\langle \ee_i, \zz \rangle_R$ is nonzero for some $i$ by \eqref{eq:nonsingular_alg}. So $\langle \ee_i, \ee\rangle_P$ is zero for all $i \ge 3$. This implies that $P$ is expressible in the form \eqref{eq:pencil}.

With this checked, we find that $w$ preserves no nontrivial rational quadratic form, and the proof of Theorem \ref{thm:locOp_uniform} goes through. The only concern is that, unlike in that theorem, we have not assumed that $P(\zz)$ is nonzero for all nonzero rational quadratic forms $P$. But this was only used to be able to apply Theorem \ref{thm:uniform_parabola} uniformly, which we do not need to prove Theorem \ref{thm:locOp_precise}.
\end{proof}

\subsection{The proof of Theorem \ref{thm:main}}
\label{ssec:main}
We may assume without loss of generality that $\mathcal{M}$ is given in Monge form
\[\{ (\xx, F(\xx))\,:\,\, \xx \in B\},\]
where $B$ is an open solid hypercube in $\R^{n-1}$ and $F: \R^{n-1} \to \R$ is a $C^4$ function.

For any $c_0 > 0$, take $\mathcal{M}_0$ to be the subset of $(\xx, F(\xx))$ in $\mcM$ such that, for some unit vector $\ee$ in $\R^{n-1}$,  we have 
\begin{equation}
\label{eq:ee_curv_cond}
\left| \frac{\partial^2}{\partial^2 t}F(\xx + t \ee)\Big|_{t = 0}\,\right|  \ge c_0.
\end{equation}
Then, for $c_0$ sufficiently small, we have
\[\vol \,\mcM_0 \ge (1 - \delta/2)\vol\, \mcM \backslash \mcM_{\text{flat}}.\]
We then may decompose $\mcM_0$ into patches over hypercubes with negligible remainder. In this way, we find that we may assume without loss of generality that we have chosen $\ee$ such that \eqref{eq:ee_curv_cond} holds for all $\xx$ in $B$, so $\mcM_{\text{flat}}$ is empty.

Choose a closed ball $B_0$ whose interior contains $\mcM$. Take $\mcM_1, \mcM_2, \dots$ to be an enumeration of the intersections of the rational quadric hypersurfaces with $B_0$. We adopt the notation $\mcM_{i\epsilon}$ for the $\epsilon$-thickening of $\mcM_i$. Then we may choose positive numbers $\epsilon_1, \epsilon_2, \dots$ such that
\[\vol \, \mcM \backslash \left(\mcM_{1\epsilon_1} \cup \mcM_{2\epsilon_2} \cup \dots \right) \ge (1 - \delta/3) \vol\, \mcM \backslash \mcM_{\text{quad}}.\]
We furthermore may assume that the $\epsilon_i$ decrease quickly enough so that
\[2\vol\, \mcM_{i+ 1 \, \epsilon_{i+1}} \,\le\,  \vol\, \mcM_{i \epsilon_i} \quad\text{for } i \ge 1.\]
Then, if we choose $c' > 0$ sufficiently small and take
\[K = B_0 \backslash \left(\mcM_{1\, c'\epsilon_1} \cup \mcM_{2 \, c'\epsilon_2} \cup \dots\right)^{\circ},\]
we may assume
\begin{equation}
\label{eq:K_big}
\vol\, \mcM_{\epsilon} \cap K \ge (1 - \delta/2) \cdot 2\epsilon \cdot \vol \,\mcM \backslash \mcM_{\text{quad}}
\end{equation}
for all sufficiently small $\epsilon$.

To finish the proof, we will decompose $\mcM_{\epsilon}$ into patches. For each patch that meets $K$, we will show that the number of rational points near it is not too much smaller than what would be predicted heuristically. Theorem \ref{thm:main} then follows by summing over the patches by \eqref{eq:K_big}.

\begin{notat}
\label{notat:dQ_cube}
Fix a basis $\ee, \ee_2, \dots, \ee_{n-1}$ for $\R^{n-1}$ and positive numbers $\delta, Q$. Choose $\vv$ in $B$ so that
\[Y = \left\{ \vv + c_1 \ee + \dots + c_{n-1} \ee_{n-1}\,:\,\, (c_1, \dots, c_{n-1}) \in [0, \delta Q^{-1/2}] \times [0, \delta Q^{-1}]^{n-2}\right\}\]
is contained entirely in $B$. Choose a real number $c$. We then define
\[\mcM(\delta, Q, \vv, c) = \left\{(\xx,  y) \in \R^n\,:\,\, \xx \in Y \,\text{ and }\, 0 \le y - F(\xx) - c \le \delta Q^{-2}\right\}.\]
We call this a $(\delta, Q)$-\emph{parallel patch} to $\mcM$. We then take
\[\mcC(\delta, Q, \vv, c) = \big\{(q, \zz) \in \R^{n+1}\,:\,\, Q \le q \le (1 + \delta)Q \,\text{ and }\, \zz/q \in \mcM(\delta, Q, \vv, c)\big\}.\]
We call this a $(\delta, Q)$-\emph{parallel patch cone}.
\end{notat}

\begin{lem}
\label{lem:parallel_patch}
Take all notation and assumptions as above. Then there is $C > 0$ depending only on $\mcM$ and $K$ so we have the following:

Choose $\delta > 0$. Then there is $Q_0 > 0$ depending on $\mcM$, $K$, and $\delta$ such that, for all $Q > Q_0$ and any $(\delta, Q)$-parallel patch $\mcM(\delta, Q, \vv, c)$ that meets $K$, we have
\[\# \left(\mcC(\delta, Q, \vv, c) \cap (\Z^{n+1})_{\textup{prim}}\right) \ge (1 - C \delta)\cdot\zeta(n+1)^{-1} \cdot  \vol\,  \mcC(\delta, Q, \vv, c).\]
\end{lem}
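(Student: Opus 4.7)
My plan is to reduce Lemma \ref{lem:parallel_patch} to Theorem \ref{thm:locOp_uniform} applied in dimension $n+1$, by identifying the cone $\mcC(\delta, Q, \vv, c)$ as (approximately) the region $\mcQ$ of that theorem for the quadratic form on $\R^{n+1}$ whose zero locus is the cone over the osculating quadric to the graph $\{(\xx, F(\xx) + c)\}$ at $(\vv, F(\vv) + c)$. Explicitly, with $G$ and $H$ the gradient and Hessian of $F$ at $\vv$, define
\[R(q, p, y_n) = qy_n - q^2(F(\vv) + c) - qG(p - q\vv) - \tfrac{1}{2} H(p - q\vv, p - q\vv),\]
a quadratic form on $\R^{n+1}$. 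Choose the basis $Z = (1, \vv, F(\vv) + c)$, $E = (0, \ee, G \cdot \ee)$ in the curvature direction, $E_j = (0, \ee_{j-1}, G \cdot \ee_{j-1})$ for $3 \le j \le n$ in the remaining tangent directions, and $E_{n+1} = (0, 0, 1)$ in the normal direction. Direct computation then verifies $R(Z) = 0$, $\langle Z, E \rangle_R = 0$, $R(E) = -A/2$ (where $A = \partial_\ee^2 F(\vv) \neq 0$ by the setup), and $\langle Z, E_{n+1} \rangle_R \neq 0$, placing us in the setting of Theorem \ref{thm:locOp_uniform}. Under the parametrization $\xx = q(Z + c_2 E + \sum_{j \ge 3} c_j E_j)$, a point of $\mcC(\delta, Q, \vv, c)$ corresponds to $qQ^{-1} \in [1, 1+\delta]$, $c_2 Q^{1/2}$ in an interval of width $\delta$, $c_j Q$ in intervals of width $O(\delta)$ for $3 \le j \le n$, and $c_{n+1} Q$ in an interval of width $O(\delta^2)$ capturing the Hessian height plus thickness.

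The main obstacle is the higher-order Taylor error: for a point $\xx$ on the actual cone,
\[R(\xx) = q^2 s + \tfrac{q^2}{6} T(p/q - \vv, p/q - \vv, p/q - \vv) + O(q^2 |p/q - \vv|^4),\]
where $T$ is the third-derivative tensor of $F$ at $\vv$ and $s \in [0, \delta Q^{-2}]$ is the thickness parameter. On the full patch, the $\ee$-component of $p/q - \vv$ has magnitude $\sim \delta Q^{-1/2}$, so the cubic term is of order $\delta^3 Q^{1/2}$, which is unbounded as $Q \to \infty$ for fixed $\delta$. Hence $R(\xx)$ does not lie in the bounded interval $[a_0, b_0]$ required by Theorem \ref{thm:locOp_uniform}, and the theorem cannot be applied directly on the full patch.

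To overcome this, I will partition the $\ee$-direction of the patch into sub-intervals of length $\delta_1 Q^{-1/2}$ with $\delta_1 \sim Q^{-1/6}$, giving $M \sim \delta Q^{1/6}$ sub-patches. On each sub-patch centered at $\vv'$, replace $R$ and $Z$ by $R_{\vv'}$ and $Z_{\vv'} = (1, \vv', F(\vv') + c)$; the cubic contribution is then $O(\delta_1^3 Q^{1/2}) = O(1)$, fitting into a bounded interval. Uniformity of Theorem \ref{thm:locOp_uniform}'s constants across all sub-patches and all patches meeting $K$ rests on two ingredients: compactness of $B_0$ (which controls the basis vectors and the curvature constants), and the construction of $K$ as the complement of thickened rational quadric hypersurfaces $\mcM_{i, c'\epsilon_i}$. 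The latter guarantees that each sub-patch center $\vv'$ has its osculating quadric $R_{\vv'}$ quantitatively separated from every rational quadric through a nearby point, so that the associated element $w \in \sl_{n+1}(\R)$ constructed as in the proof of Theorem \ref{thm:locOp_uniform} lies in a compact subset of $\msW_0$.

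Summing the lower bounds from Theorem \ref{thm:locOp_uniform} over the $M$ sub-patches then yields the lemma's count. The delicate point is coordinating the theorem's additive error parameter $\epsilon$ (which contributes $\epsilon Q^{1/2}$ per sub-patch) against $M \sim \delta Q^{1/6}$, so that the cumulative error is a small fraction of the heuristic total $\zeta(n+1)^{-1} \vol \mcC(\delta, Q, \vv, c) \asymp \delta^{n+1} Q^{1/2}$; this is arranged by fixing $\epsilon$ sufficiently small in $\delta$ before letting $Q$ grow, producing the claimed bound $(1 - C\delta)\zeta(n+1)^{-1} \vol \mcC(\delta, Q, \vv, c)$.
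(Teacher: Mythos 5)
Your identification of the right tool (Theorem \ref{thm:locOp_uniform} in dimension $n+1$) and of the obstruction --- the cubic Taylor term in the $\ee$-direction makes $R(\xx)$ of order $\delta^3 Q^{1/2}$, which cannot fit into the fixed interval $[a_0,b_0]$ --- is correct and matches the paper's starting point. But the subdivision fix does not close the gap, and the error bookkeeping in your last paragraph is where it fails. Each invocation of Theorem \ref{thm:locOp_uniform} costs an additive error $-\epsilon Q^{1/2}$, where $\epsilon$ must be fixed \emph{before} $Q_0$ is determined. Summing over $M \sim \delta/\delta_1 \sim \delta Q^{1/6}$ sub-patches therefore produces a cumulative loss of order $\epsilon\,\delta\, Q^{2/3}$. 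The target quantity $\vol\, \mcC(\delta,Q,\vv,c)$ is only of order $\delta^{n+1} Q^{1/2}$. No matter how small you fix $\epsilon$ in terms of $\delta$, the ratio $\epsilon\,\delta\, Q^{2/3} / (\delta^{n+1} Q^{1/2})$ tends to infinity with $Q$, so the lower bound degenerates. Making the sub-patches coarser (larger $\delta_1$) to reduce $M$ reintroduces an unbounded cubic term, so there is no admissible choice of $\delta_1$; the subdivision approach is structurally incompatible with a per-call error that does not scale with sub-patch volume.

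The paper avoids subdivision entirely by choosing a better quadratic form than the osculating quadric. Because one works with a quadratic form on the homogeneous cone in $\R^{n+1}$ (not just on $\R^n$), there are enough degrees of freedom to impose, beyond the second-order matching, the additional condition $R^{\circ}_{111}(0)=0$ in \eqref{eq:derivative_conds} --- this is why the explicit coefficients involve $G_{111}(0)$. With that extra constraint satisfied, the Taylor error becomes $O(u_1^4 + u_2^2 + \dots + u_{n-1}^2)$, and since $u_1 \sim \delta Q^{-1/2}$ while $u_i \sim \delta Q^{-1}$ for $i \ge 2$, this is $O(\delta^2 Q^{-2})$; after multiplying by $q^2 \sim Q^2$ the error in $R(\xx)$ is $O(\delta^2)$, safely inside a bounded interval, so a single application of Theorem \ref{thm:locOp_uniform} covers the whole patch cone. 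The missing idea in your proposal is precisely this: that the homogeneous quadratic form on $\R^{n+1}$ has a free third-order coefficient in the curvature direction that can be spent to annihilate the cubic term, rather than trying to beat it by shrinking the patch.
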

\begin{proof}
We want to choose a quadratic form $R$ on $\R^{n+1}$ that is small on the parallel patch cone $\mcC(\delta, Q, \vv, c)$. To start, choose 
\[(\xx,  x_n) = (x_1, \dots, x_{n})\quad\text{in}\quad\mcM(\delta, Q, \vv, c) \cap K,\]
 and define $G: \R^{n-1} \to \R$ by
\[G(u_1, \dots, u_{n-1}) = F\left(\xx + u_1 \ee + u_2 \ee_2 + \dots + u_{n-1} \ee_{n-1}\right).\]

We want to choose $R$ so that
\begin{align}
\label{eq:taylor} &R\big(1,\, \xx + u_1 \ee_1 + \dots + u_{n-1} \ee_n,\, G(u_1, \dots, u_{n-1}) + x_n - F(\xx) \big)\\
& = \mathcal{O}\left(u_1^4 + u_2^2 + \dots + u_{n-1}^2\right)\nonumber
\end{align}
for small values of the $u_i$, where the implicit constant depends on $\mcM$ but not on the choice of parallel patch. Calling this function $R^{\circ}(u_1, \dots, u_{n-1})$, we see that this amounts to checking
\begin{align}
&R^{\circ}(0) = R^{\circ}_{111}(0) = 0 \quad\text{and} \label{eq:derivative_conds}\quad R^{\circ}_{i}(0) = R^{\circ}_{1i}(0) =  0 \, \text{ for }\, i \le n-1.
\end{align}
Here, we use multi-index notation for the partial derivatives, so $R^{\circ}_{1i}$ is notation for $\frac{\partial^2}{\partial u_1 \partial u_i} R^{\circ}$, etc.

This is a collection of $2n$ linear conditions, while the space of all quadratic forms on $n+1$ variables is $\tfrac{1}{2}(n^2 + n)$ dimensional, so we know that some $R$ can be found. More specifically, if we define $R$ by
\begin{align*}
&R\left(u_0, \,v_0\xx +  v_1\ee + v_2 \ee_2 + \dots + v_{n-1}\ee_{n-1}, \,x_n + v_n\right) \,=\, \sum_{i = 1}^n a_{0i} v_0v_i + \sum_{i =1}^n a_{1i}v_1v_i
\end{align*}
with
\begin{align*}
&a_{0n} =-3G_{11}(0), \quad a_{1n} = G_{111}(0),\\
&a_{11} = \tfrac{3}{2} G_{11}(0)^2  - G_{1}(0) G_{111}(0),\\
&a_{1i} =  3 G_{1i}(0) G_{11}(0) - G_i(0) G_{111}(0)\quad\text{for } \,2 \le i \le n-1, \quad\text{and}\\
&a_{0i} = 3G_{11}(0) G_{i}(0) \quad\text{for }\, i \le n-1,
\end{align*}
we find that $R$ satisfies the conditions \eqref{eq:derivative_conds}.

Take
\[a =  F(\xx) + c - \xx_n \quad\text{and}\quad b = \delta Q^{-2} + F(\xx) + c - \xx_n.\]
These have magnitude at most $\delta Q^{-2}$. For convenience, assume $G_{11}(0)$ is negative. From \eqref{eq:taylor}, we find that there are positive numbers $C_0, C_1$ such that
\[\mcM_{\approx}:= \left\{ (\zz, z_n) \in Y \times \R \,:\,\, a_{0n}a + C_0 \delta^2 Q^{-2}   \le R(1, \zz, z_n) \le   a_{0n} b + C_0 \delta^2 Q^{-2}\right\}\]
is contained in $\mcM(\delta, Q, \vv, c)$, and that
\[\vol\, \mcM_{\approx}  \ge (1 - C_1 \delta)\vol\, \mcM(\delta, Q, \vv, c).\]
Here, neither $C_0$ nor $C_1$ depends on $\delta$, $Q$, or the choice of parallel patch.

We note that the parallel patch cone has volume at least $c_1 \delta^{n+1} Q^{1/2}$, where $c_1 > 0$ does not depend on $\delta$, $Q$, or the choice of parallel patch. Then the lemma follows from applying Theorem \ref{thm:locOp_uniform} uniformly to the possible choices of $\mcM_{\approx}$.
\end{proof}

We now may prove Theorem \ref{thm:main}, which fixes a choice of $\delta > 0$ and $\mcM$. As above, we may assume that \eqref{eq:ee_curv_cond} is satisfied, and we choose $K$ as above so \eqref{eq:K_big} is satisfied for sufficiently small $\epsilon$.

Choose $\delta_1 > 0$. Then, for $Q > 1$ and $\epsilon$ in $[\delta Q^{-2},\, Q^{-1}]$, we may decompose the cone $\mcC_Q\left(\mcM_{\epsilon}\right)$ as a disjoint union of $(\delta_1, Q_1)$-parallel patch cones with $Q_1$ no smaller than $\delta_1 Q$, together with a remainder of volume at most $C_0 \delta_1 \vol\, \mcC_Q\left(\mcM_{\epsilon}\right)$, where $C_0 > 0$ depends on $\mcM$ and $\delta$ but not on $\delta_1$, $Q$, or $\epsilon$.

By \eqref{eq:K_big}, the union of these parallel patch cones not above a parallel patch meeting $K$ has volume at most $\tfrac{2}{3}\delta \vol\, \mcC_Q\left(\mcM_{\epsilon}\right)$ so long as $Q$ is sufficiently large relative to $\mcM$. Applying Lemma \ref{lem:parallel_patch} to the remainder, we find there is $C_1 > 0$  depending only on $\mcM$ so that, for all $Q$ larger than some bound determined from $\mcM$  and $\delta_1$,
\[\#\left(  \mcC_Q\left(\mcM_{\epsilon}\right) \cap \left(\Z^{n+1}\right)_{\text{prim}} \right) \ge \zeta(n+1)^{-1} \cdot \left(1 - \tfrac{2}{3}\delta - (C_0 + C_1)\delta_1\right) \vol\, \mcC_Q\left(\mcM_{\epsilon}\right).\]
We then may take $\delta_1$ small enough that the conclusion of the theorem holds for all sufficiently large $Q$. \qed

\subsection{The proof of Theorems \ref{thm:hypersurface_qual}, \ref{thm:main_eff}, and \ref{thm:planarcurves}}
\label{ssec:RAG}
With the central result Theorem \ref{thm:main} shown, we now prove the other main theorems of this paper. We start with Theorem \ref{thm:hypersurface_qual}, where some work is required to handle hypersurfaces contained in unions of rational quadrics.

\begin{proof}[Proof of Theorem \ref{thm:hypersurface_qual}]
By Theorem \ref{thm:main}, we may focus on the case that $\mcM$ is contained in $\mcM_{\text{flat}} \cup \mcM_{\text{quad}}$. In the case that $\mcM$ is contained in $\mcM_{\text{flat}}$, it is straightforward to show that $\mcM$ is contained in a hyperplane since it is a connected manifold. So we may assume the curvature is nonzero at some point in $\mcM$.

Take $P_1, P_2, \dots$ to be an enumeration of the integral quadratic polynomials up to scalar multiple, and take $\mcM_i$ to be the zero locus of $P_i$ in $\mcM$. This is a closed set. Since $\mcM$ is $C^4$, the interiors $\mcM_i^{\circ}$ and $\mcM_j^{\circ}$ have disjoint closures for all $i \ne j$. So, for any point $x$ in $\mcM$ with nonzero curvature tensor, there is either a unique $i$ so that $x$ is contained in $\mcM_i^{\circ}$, or there is no finite collection of $\mcM_i$ whose union contains a neighborhood of $x$.

Assuming that no $P_i$ vanishes on all of $\mcM$, we may find an $x$ where $\mcM$ has nonzero curvature such that the latter condition holds. Choose a set of points $x_1, x_2, \dots$ converging to $x$ such that $x_i$ is not in $\mcM_j$ for any $j < i$. According to the procedure in Section \ref{ssec:quadrics}, for sufficiently large $i$, we may associate each $x_i$ with an element $w_i$ in $\sl_{n+1}(\R)$. Take $\mu_i$ to be the measure on $\SL_{n+1}(\R)/\SL_{n+1}(\Z)$ defined so 
\[\frac{1}{T}\lim_{T \to \infty} \int_0^{\infty} f(u_t) dt = \int f d\mu_i\]
 for all $f$ in $C_c(\SL_{n+1}(\R)/\SL_{n+1}(\Z))$. 

Take $\mu$ to be a weak${}^*$ limit of some infinite sequence of $\mu_i$. By \cite[Theorem 1.1]{MoSh95}, $\mu$ is invariant under the left action of some Lie group $H$ that contains $\exp(w_i)$ for infinitely many $i$. Take $\mfh$ to be the Lie algebra associated to $H \subseteq \SL_{n+1} (\R)$.

Take $W$ to be the set of nilpotent $w$ in $\mfh$ of rank at most $2$. Then $w$ is a real algebraic set. The subset of $w$ in $W$ that preserve a given nonzero integral quadratic form is also a real algebraic set, as is the subset of $w$ such that $w^2 = 0$. 

From the construction of $H$, we know that $W$ is not contained in any finite union of these algebraic subsets. So $W$ must contain an irreducible component $W_0$ such that the intersection of any one of these subsets with $W_0$ has positive codimension \cite[Theorem 2.8.3]{Boch13}. From a measure-theoretic argument, $W_0$ is not contained in the (countable) union of all of these subsets.

So there is some $w$ in $W$ such that $w^2$ is nonzero and such that $w$ preserves no nonzero integral quadratic form. Then Theorem \ref{thm:uniform_parabola_func} shows that
\[\lim_{T \to \infty} \frac{1}{T} \int_0^T \sum_{v \in (\Z^{n+1})_{\text{prim}}}  f\left(\exp(tw) g v\right) \ge  \zeta(n+1)^{-1} \int_{\R^n} f(x)dx\]
for $f$ nonnegative and continuous with compact support and any $g$ in $\SL_n(\R)$. Then, for any such $f$ besides $0$
\[\limsup_{i \to \infty} \int  \sum_{v \in (\Z^{n+1})_{\text{prim}}}  f\left(gv\right)  d\mu_i(g)  \ge \int \sum_{v \in (\Z^{n+1})_{\text{prim}}}  f\left(gv\right)  d\mu(g) > 0.\]
This is enough to show that, for large enough $i$, the thickened parabolas corresponding to the local approximation to the surface at $x_i$ contain primitive integer points.
\end{proof}

The proof of Theorem \ref{thm:main_eff}, an effective rational point count for curves, largely follows the proof of Theorem \ref{thm:main} given above. The extra tool we need is the following, which uses the notation from Section \ref{ssec:eff}:
\begin{lem}
\label{lem:Qhugger}
There is an absolute $C > 0$ so we have the following:

Take $\mathcal{M}$ to be a $C^5$ curve in $\R^2$, and take $\mathcal{M}_0$ as in Theorem \ref{thm:main_eff}. Then, for any $\delta > 0$, there is $T_0 > 10$ so that the subset of $x \in \mathcal{M}_0$ such that the corresponding element $w_x \in \sl_3(\R)$ lies in $\mathscr{W}(6, T_0)$ has arclength at least $(1 - \delta) \cdot \textup{arclength}(\mcM_0)$.
\end{lem}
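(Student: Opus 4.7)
The plan is to reduce to a compact subset of $\mathcal{M}_0$ and verify the clauses in the definition of $\mathscr{W}(6, T_0)$ separately. Since $\mathcal{M}$ is $C^5$ and the conditions defining $\mathcal{M}_0$ --- nonzero curvature and osculating conic of contact exactly four --- are open in $\mathcal{M}$, I would first pass to a compact $K \subseteq \mathcal{M}_0$ with $\textup{arclength}(K) \geq (1 - \delta/2)\,\textup{arclength}(\mathcal{M}_0)$ on which the curvature and the $C^5$ projective invariant witnessing ``contact exactly $4$'' are bounded below, and the derivatives up to order $5$ of a Monge parameterization of $\mathcal{M}$ are bounded above. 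On $K$ the element $w_x \in \sl_3(\R)$ from the construction in the proof of Theorem~\ref{thm:locOp_uniform} is automatically rank $2$ with $w_x^3 = 0$, and after the harmless rescaling that leaves the one-parameter subgroup $\{\exp(tw_x)\}$ unchanged up to time reparameterization, the bounds $\|w_x\| < 6$ and $\|w_x^2\| > 1/6$ hold uniformly, verifying the first bullet of the definition of $\mathscr{W}(6, T_0)$.

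For the second bullet, the cone construction identifies the unit vector $v_x$ spanning $\Img(w_x^2)$ with (a scalar multiple of) $(1, x_1, F(x_1)) \in \R^3$, so for any quadratic form $P$ on $\R^3$,
\[
|P(v_x)| \;\asymp_{\mathcal{M}}\; |\widetilde P(s_x)|, \qquad \widetilde P(s) \,:=\, P\bigl(1, x_1(s), F(x_1(s))\bigr),
\]
and the bad set is contained in $B(T_0) := \bigcup_P \{s \in K : |\widetilde P(s)| < C_0 T_0^{-6}\}$ over nonzero integer $P$ of height $\leq T_0$, with $C_0$ depending only on $\mathcal{M}$. The central local input is a uniform sublevel-set bound. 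Normalizing $\|P\|_\infty = 1$, I would show $\max_{0 \leq k \leq 5} |\widetilde P^{(k)}(s_x)| \geq c_K$ holds uniformly in $x \in K$ and in $P$: otherwise $\{P = 0\}$ would have order of contact $\geq 5$ with $\mathcal{M}$ at $x$, contradicting the definition of $\mathcal{M}_0$, and compactness of $K$ together with the unit sphere in the six-dimensional space of real quadratic forms supplies the uniformity. A standard one-variable sublevel-set lemma for $C^5$ functions then yields $|\{s \in K : |\widetilde P(s)| < \epsilon\}| \lesssim (\epsilon/c_K)^{1/5}$ uniformly for $\|P\|_\infty = 1$.

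The main obstacle lies in the counting step: summing the sublevel bound as a naive union over the $O(T_0^6)$ integer forms $P$ does not immediately close, so one must stratify by the order of contact $k \in \{0,1,2,3,4\}$ of $\{P = 0\}$ with $\mathcal{M}$ at a near-zero of $\widetilde P$. The locus of $P \in \R^6$ realizing contact $\geq k$ at some point of $K$ is cut out by $k$ conditions on a one-parameter family of hyperplanes and so is a $(6-k)$-dimensional semi-algebraic set; classical lattice-point counts (or the Bombieri--Pila--Wilkie determinant method) bound the number of integer $P$'s of height $\leq T_0$ on a $T_0^{-6}$-neighborhood of it by $\lesssim T_0^{6-k+\epsilon}$, while the per-$P$ sublevel estimate improves to $\lesssim T_0^{-(6+1)/(k+1)}$ at the contact-$k$ stratum. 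The $\mathcal{M}_0$ hypothesis caps $k$ at $4$ on $K$, so summing the contributions over $k = 1, \ldots, 4$ and picking $T_0$ large enough relative to $\delta$ and $\mathcal{M}$ delivers $|B(T_0)| < (\delta/2)\,\textup{arclength}(\mathcal{M}_0)$. The remaining ingredients --- the compactness reduction, the norm bounds on $w_x$, and the identification of $v_x$ with the cone direction --- are routine by comparison.
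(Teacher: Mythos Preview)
Your reduction to a compact $K$, the norm bounds on $w_x$, the identification of $v_x$ with the cone direction, and the uniform derivative lower bound $\max_{k\le 5}|\widetilde P^{(k)}(s)|\ge c_K$ are all correct, and the last of these is exactly the key geometric input. But the counting step does not close, and your stratification sketch does not rescue it. With your own numbers, the contact-$k$ stratum contributes at most $T_0^{6-k+\epsilon}$ forms times a sublevel bound $T_0^{-7/(k+1)}$, giving $T_0^{(6-k)-7/(k+1)+\epsilon}$; for every $k\in\{1,2,3,4\}$ this exponent is strictly positive, so the union bound still diverges. The lattice-point estimate is also unjustified: a $(6-k)$-dimensional real locus does not automatically carry $\lesssim T_0^{6-k+\epsilon}$ integer points of height $\le T_0$ in a $T_0^{-6}$-neighborhood, and neither Bombieri--Pila nor Pila--Wilkie gives this without substantial extra input about the Diophantine geometry of the stratum. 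What you are attempting is essentially an elementary proof that a nondegenerate curve in $\R^5$ is extremal, which is exactly the content of Sprind\v{z}uk's conjecture and is genuinely hard.

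The paper sidesteps all of this. The observation you made---that the six conditions $\widetilde P^{(k)}(s_0)=0$, $k=0,\dots,5$, are independent on $\mathcal{M}_0$---says precisely that the Veronese map $\phi:(x_1,x_2)\mapsto(x_1,x_2,x_1^2,x_1x_2,x_2^2)$ sends $\mathcal{M}_0$ to a nondegenerate $C^5$ curve in $\R^5$. The second bullet of $\mathscr{W}(6,T_0)$ is then a linear-form inequality $|a_0+a\cdot\phi(x)|\ge T_0^{-6}$ for all integer $a$ of height $\le T_0$; failing it for arbitrarily large $T_0$ forces $\phi(x)$ to be very well approximable with exponent $6>5$. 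Kleinbock--Margulis \cite[Theorem A]{Klein98} gives that the VWA points on a nondegenerate curve form a null set, so the $\limsup$ over $T_0$ of the bad sets is null, and continuity of measure gives $|B(T_0)|\to 0$. This is a two-line argument once the Veronese reformulation is made; your sublevel inequality is the nondegeneracy hypothesis of \cite{Klein98}, and you should invoke that theorem rather than attempt to reprove it.
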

\begin{proof}
For $x$ in $\mathcal{M}_0$, the osculating conic has order of contact four with $\mathcal{M}$. Taking $\phi: \R^2 \to \R^5$ to be the map
\[(x, y)\mapsto (x, y, x^2, xy, y^2),\]
this implies that $\phi(\mathcal{M}_0)$ is a nondegenerate $C^5$ curve at $\phi(x)$.

By \cite[Theorem A]{Klein98}, we find that, in some neighborhood $U \subseteq \mathcal{M}$ of $x$, the set of $y \in U$ such that $w_y$ lies outside $\bigcap_{T_0 > 10}\msW(6, T_0)$ is negligible. Then, for sufficiently large $T_0$, we find that $\msW(6, T_0)$ contains a subset of $\mathcal{M}_0$ of volume $(1 - \delta) \vol\, \mathcal{M}$.
\end{proof}

With this proved, Theorem \ref{thm:main_eff} now follows from Theorem \ref{thm:R3_thickened} as in the argument in Section \ref{ssec:main}. Theorem \ref{thm:planarcurves} then follows immediately. After all, if the set $\mcM_0$ defined in Theorem \ref{thm:main_eff} is negligible, it then follows that every point on $\mcM$ where the curvature is nonzero  has some neighborhood in $\mcM$ contained in a conic. This then implies that $\mcM$ is covered by a finite collection of lines and conics. Since it is also connected and $C^5$, we find that some single quadratic polynomial vanishes on $\mcM$, establishing the theorem by contradiction. \qed

\begin{rmk}
The $C^4$ condition in Theorem \ref{thm:main} was only used in the proof of Lemma \ref{lem:parallel_patch}. Indeed, this lemma, and hence the theorem, remains true even if $\mcM$ is assumed only to be $C^3$, so long as the involved $3^{\text{rd}}$ derivatives are assumed to be Lipschitz continuous. It is unclear to the author whether there should exist $C^3$ counterexamples to this theorem.

On the other hand, it is straightforward to construct $C^3$ counterexamples to Theorem \ref{thm:hypersurface_qual} by stitching together patches of rational quadrics.

Finally, the extra $C^5$ condition in Theorems \ref{thm:main_eff} and \ref{thm:planarcurves} is only used in the proof of Lemma \ref{lem:Qhugger} to eliminate curves that spend too long near rational conics. Our guess would be that some extra condition is needed beyond $C^4$ to eliminate such pathological curves, but we have not shown this.
\end{rmk}

\section{Semisimple Lie algebras}
\label{sec:semisimple}
As a first step towards proving Theorem \ref{thm:unRatnered}, we will prove a classification result for the semisimple part of the associated Lie algebras.

\begin{thm}
\label{thm:semisimple}
Take $V$ to be a finite dimensional vector space over $\QQ$. Choose a nonzero nilpotent element $w$ in $\slV \otimes \R$. We will assume that $w$ has rank at most $2$ as an endomorphism of $V \otimes \R$, and we assume that $w^2$ is nonzero if $w$ has rank $2$.

Take $\mfh \subseteq \sl(V)$ to be the minimal Lie subalgebra of $\slV$ such that $\mfh \otimes \R$ contains $w$. We will suppose that $\mfh$ is semisimple and that the kernel of $\mfh$ is trivial.

Then there is some number field $K$ with a real embedding such that $(\mfh, V)$ is identifiable with 
\[\left(\mathfrak{sl}( K^{n/d}), K^{n/d}\right)\quad\text{or}\quad \left(\mathfrak{so}_Q\,( K^{n/d}),\,K^{n/d}\right),\]
where $d$ denotes the degree of $K$ over $\QQ$ and where $Q$ denotes some nondegenerate quadratic form on $K^{n/d}$. In the orthogonal case, $w$ must have rank $2$.
\end{thm}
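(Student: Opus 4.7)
My plan is to first analyze $\mfh$ over $\overline{\QQ}$ using the low rank of $w$, then descend to $\QQ$ via Galois theory. I would extend scalars and decompose $\mfh \otimes \overline{\QQ} = \bigoplus_{\sigma} \mfg_\sigma$ into simple ideals, permuted by $\Gal(\overline{\QQ}/\QQ)$. Every simple $\mfh \otimes \overline{\QQ}$-submodule of $V \otimes \overline{\QQ}$ factors as an external tensor product $U = \bigotimes_\sigma V_\sigma$, and on such a module $w = \sum w_\sigma$ acts by the derivation $\sum_\sigma I \otimes \cdots \otimes w_\sigma \otimes \cdots \otimes I$.

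The key combinatorial input is a lower bound on the rank of such a derivation. A direct calculation would show that when two or more of the $w_\sigma$ act nontrivially on a summand $U$, the rank of $w$ on $U$ is at least $\dim V_{\sigma_1} + \dim V_{\sigma_2} - 2$, which exceeds $2$ unless exactly two factors are active with dimensions $2 \times 2$ each. This latter exceptional configuration is precisely the standard representation of $\so_4 \cong \sl_2 \oplus \sl_2$, already matching the orthogonal conclusion of the theorem. In the generic one-active-factor-per-summand case, combined with the hypothesis that the $\mfh$-invariants $V^{\mfh}$ are trivial and the minimality of $\mfh$, I would force $\Gal(\overline{\QQ}/\QQ)$ to act transitively on $\{\mfg_\sigma\}$, so $\mfh = \textup{Res}_{K/\QQ}\,\mfg$ for some number field $K$ and $K$-simple $\mfg$. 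Running the same tensor-rank analysis on $\mfh \otimes \R = \prod_{v \mid \infty} \mfg_v$ would show that $w$ is concentrated at a single infinite place, which must be real (complex places pair under complex conjugation and would double the rank contribution), producing the real embedding $K \hookrightarrow \R$.

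Finally, I would identify $\mfg$ at the chosen real place. Here $\mfg_{v_0}$ is a simple real Lie algebra acting faithfully on a real vector space with a nilpotent element of Jordan type $[2,1,\dots,1]$ or $[3,1,\dots,1]$. The simple real Lie algebras whose minimal nilpotent orbits in faithful irreducible representations attain rank $\le 2$ are $\sl_m$ in its standard representation (rank $1$), $\so_m$ in its standard representation (providing rank-$2$ nilpotents with $w^2 \ne 0$), and $\mathfrak{sp}_{2m}$ in its standard representation (long-root vectors, rank $1$, while any rank-$2$ symplectic nilpotent has $w^2 = 0$ by the partition parity conditions). Exceptional types and non-standard representations all yield minimum nilpotent rank $\ge 3$. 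The $\mathfrak{sp}$ case is eliminated by combining minimality with triviality of $V^{\mfh}$: a long-root nilpotent in $\mathfrak{sp}_{2m}$ already lies in a proper $\sl$-type subalgebra, so the genuinely minimal $\mfh$ is that smaller $\sl$ acting with a nontrivial trivial summand in $V$, contradicting $V^{\mfh} = 0$. A parallel minimality argument pins $V$ down to the standard representation of $\mfg$ exactly, with no extraneous summands. The hardest step will be the $\mathfrak{sp}$ exclusion, which requires careful tracking of the $\QQ$-structure of proper semisimple subalgebras of $\mathfrak{sp}_{2m}$ to confirm the minimal $\mfh$ never saturates all of $\mathfrak{sp}$; the tensor-rank lower bound is also delicate in the low-dimensional edge cases where it is nearly tight.
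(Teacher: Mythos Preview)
Your overall architecture matches the paper's: reduce to a simple Lie algebra over an algebraically closed field, classify those admitting a low-rank nilpotent in a faithful irreducible representation, then descend via Galois. The tensor-product rank argument for the case of multiple active simple factors, leading to the $\so_4 \cong \sl_2 \oplus \sl_2$ exception on a four-dimensional space, is essentially the paper's Proposition in Section~\ref{sec:semisimple}. The Galois descent to extract the number field $K$ and the real embedding is likewise close to the paper's argument.

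The genuine gap is in your penultimate paragraph. You assert that ``exceptional types and non-standard representations all yield minimum nilpotent rank $\ge 3$'' and list $\sl_m$, $\so_m$, $\sp_{2m}$ in their standard representations as the only survivors. This is the content of the theorem, not an input to it: there is no citable classification of pairs $(\mfg, V)$ with $\mfg$ simple and $V$ faithful irreducible such that some nilpotent of $\mfg$ has rank $\le 2$ on $V$. The paper spends the bulk of Section~\ref{sec:semisimple} establishing exactly this via a root-system argument. Its key device (Lemma~\ref{lem:rank_bnd}) bounds the rank of $w$ from below by the number of weights $\beta \in \Lambda(V)$ with $\beta + \alpha \in \Lambda(V)$, for any minimal root $\alpha$ in the support of $w$; this is then pushed through a type-by-type Weyl-orbit count over $A_n$, $B_n$, $C_n$, $D_n$ and a separate argument for the exceptional types. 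Your sketch contains no mechanism for producing a rank lower bound inside a single simple factor, and without one this classification step is unsupported.

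Your proposed $\sp$ exclusion is also more fragile than you indicate. You argue that a long-root nilpotent in $\sp_{2m}(\R)$ sits inside a proper $\sl$-type subalgebra, so minimality forces $\mfh$ to be that smaller algebra. But minimality is a $\QQ$-condition: there is no reason the real $\sl_2$ containing $w$ should be defined over $\QQ$, and its $\QQ$-Zariski closure could well be all of $\mfh$. The paper instead excludes $\sp_{2m}$ on its standard representation directly from the symplectic form (Lemma~\ref{lem:standard_symplectic}): for any $w \in \sp(V)$ one has $B(w^2 v, v) = -B(wv, wv) = 0$, so if $w^2 \ne 0$ the line $w^2 V$ lies in the kernel of $B$, contradicting nondegeneracy. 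This is a two-line argument and avoids any appeal to minimality or $\QQ$-structure.
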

To prove this, we first prove the analogous result over $\C$.

\begin{thm}
\label{thm:ss_complex}
Take $V$ to be a finite dimensional irreducible faithful representation of a complex semisimple Lie algebra $\mfh$. We suppose $\mfh$ contains a nonzero nilpotent element $w$ of rank at most $2$. If $w$ has rank $2$, we suppose $w^2$ is nonzero.

Then either $\mfh$ equals $\slV$, or $w$ has rank $2$  and there is some nondegenerate quadratic form $Q$ on $V$ such that $\mfh = \mathfrak{so}_Q(V)$.
\end{thm}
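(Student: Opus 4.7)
The plan is to extract an $\sl_2$-triple from $w$ via Jacobson--Morozov and then classify the resulting $(\mfh, V)$ using Cartan--Killing.

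First, I would embed $w = e$ in an $\sl_2$-triple $(e,h,f) \subseteq \mfh$ by Jacobson--Morozov. Decomposing $V$ as an $\sl_2$-module into copies of the $k$-dimensional irreducibles $V_k$, the rank hypotheses pin down the Jordan type of $w$ and force exactly one of the following three shapes:
\begin{itemize}
\item Case A: $\rank(w) = 1$ and $V \cong V_2 \oplus V_1^{\oplus a}$;
\item Case B: $\rank(w) = 2$ with $w^2 = 0$ and $V \cong V_2^{\oplus 2} \oplus V_1^{\oplus a}$;
\item Case C: $\rank(w) = 2$ with $w^2 \ne 0$ and $V \cong V_3 \oplus V_1^{\oplus a}$.
\end{itemize}
In every case, the $h$-eigenvalues on $V$ lie in $\{-2, -1, 0, 1, 2\}$, so the $\textup{ad}(h)$-grading on $\mfh \subseteq V \otimes V^*$ is concentrated in $\{-4, \dots, 4\}$.

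Next I would reduce to the case that $\mfh$ is simple. If $\mfh = \mfh_1 \oplus \cdots \oplus \mfh_s$ is a nontrivial sum of simple ideals, then irreducibility of $V$ forces $V = V_1 \boxtimes \cdots \boxtimes V_s$ as an external tensor product of irreducibles, with each $\dim V_i \ge 2$. Writing $w = w_1 + \cdots + w_s$ with each $w_i \in \mfh_i$ nilpotent, a direct kernel calculation using the nilpotency of each $w_i$ shows $\rank(w)$ grows essentially as $\sum_i \rank(w_i) \prod_{j \ne i} \dim V_j$, and the bound $\rank(w) \le 2$ then forces $s = 2$ and $\dim V_1 = \dim V_2 = 2$. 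This identifies $\mfh \cong \sl_2 \oplus \sl_2 \cong \so_Q(V)$ acting on $V \cong \C^4$ as the standard orthogonal representation, landing in the desired conclusion.

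The bulk of the work is the case that $\mfh$ is simple. Fixing a Cartan subalgebra $\mft \ni h$, the constraint that every weight $\mu$ of $V$ satisfies $\mu(h) \in \{-2, \ldots, 2\}$ and that the highest weight $\lambda$ satisfies $\lambda(h) \in \{1, 2\}$ forces $h$ to induce a very short $\Z$-grading on $\mfh$; by the Vinberg--Dynkin theory such gradings on simple $\mfh$ are classified by marked Dynkin diagrams with labels in $\{0, 1, 2\}$. The plan is to traverse this finite list, in each case determining which irreducible representations $V$ admit a compatible nilpotent $w$ of the prescribed Jordan type. For classical simple algebras, this produces only the natural representations of $\sl(V)$ (in all three Cases) and of $\so_Q(V)$ (in Cases B and C), matching the statement.

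The main obstacle is this final case-by-case elimination in the simple setting. The exceptional types ($G_2, F_4, E_{6,7,8}$) and the non-standard irreducible representations of classical types (spin, $\Lambda^k$, $S^k$ of the natural representation) must both be ruled out. The essential tool is a dimension comparison: the narrow $h$-weight structure on $V$ caps the dimension of $V$ in terms of $\dim \mfh_0$ and the root multiplicities of $\mfh$, while the smallest nontrivial irreducible of each exceptional algebra is too large to fit in that budget, and the weight diagrams of the higher fundamental representations of the classical types produce $h$-eigenvalues of magnitude $\ge 3$ except in low-dimensional coincidences ($\so_6 \cong \sl_4$, etc.) that already fall under the stated conclusion.
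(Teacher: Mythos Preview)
Your outline is a genuinely different route from the paper's, and with care it can be made to work, but several points need sharpening.

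\textbf{Comparison with the paper.} The paper does not invoke Jacobson--Morozov at all. Instead it writes $w$ as a sum of root vectors $w = \sum_{\alpha \in \Phi^+} w_\alpha$, isolates a \emph{minimal} root $\alpha$ in the support of $w$, and proves the key combinatorial bound (Lemma~\ref{lem:rank_bnd}): $\rank w$ is at least the number of weights $\beta \in \Lambda(V)$ with $\beta + \alpha \in \Lambda(V)$. This is then pushed through each classical and exceptional type by explicit Weyl-orbit counts. Your $\sl_2$-triple approach trades this root-by-root bookkeeping for a single global constraint, namely that the $h$-eigenvalues on $V$ lie in $\{-1,0,1\}$ (Case~A) or $\{-2,0,2\}$ (Case~C); equivalently, the Jordan type of $w$ on $V$ is $(2,1^a)$ or $(3,1^a)$. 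This is a clean reformulation and immediately explains why $\mathfrak{sp}$ is excluded in Case~C and $\mathfrak{so}$ in Case~A (odd/even-part multiplicity constraints on nilpotent Jordan types), which the paper handles by separate arguments.

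\textbf{Issues to fix.} First, your Case~B (rank $2$, $w^2 = 0$) is ruled out by hypothesis and should not appear; the theorem assumes $w^2 \ne 0$ whenever $\rank w = 2$. Second, your rank formula $\sum_i \rank(w_i) \prod_{j \ne i} \dim V_j$ for the non-simple reduction is not correct as stated (for $\sl_2 \oplus \sl_2$ on $\C^2 \otimes \C^2$ it predicts $4$, but the actual rank is $2$); the paper's argument, that $v_1 \mapsto w(v_1 \otimes v_2)$ is injective once $w_2 v_2 \notin \C v_2$, is what you want. Third, and most importantly, your ``dimension comparison'' for the simple case is too vague to be a proof. The exceptional types are not excluded by a crude dimension bound: for instance, $G_2 \subset \mathfrak{so}_7$ does admit nilpotents of small rank on $\C^7$, and the paper spends a full page (Lemma~\ref{lem:exceptional}) on a delicate weight-orbit count to dispose of them. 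If you pursue your route, the honest replacement for ``dimension comparison'' is either to quote the tabulated Jordan types of nilpotent orbits on small representations for each simple type, or to reprove the relevant exclusions via the weighted Dynkin diagram and branching; either way this is the bulk of the work, comparable in length to the paper's case analysis.
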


Fans of Dynkin diagrams may notice that this theorem claims that $\mfh$ needs to be simple except in one special case. We handle this case first.
\begin{prop}
Given $w$, $\mfh$ and $V$ satisfying the conditions of Theorem \ref{thm:ss_complex}, if $\mfh$ is not simple, then $V$ is $4$-dimensional, $w$ has rank $2$, and $\mfh = \mathfrak{so}_Q(V)$ for some nondegenerate quadratic form $Q$ on $V$.
\end{prop}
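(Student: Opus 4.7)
The plan is to exploit the tensor product decomposition of an irreducible faithful representation of a semisimple Lie algebra and then carry out a rank count.

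First, I would write $\mfh = \mfh_1 \oplus \cdots \oplus \mfh_k$ as a sum of simple ideals with $k \ge 2$ and use the standard fact that $V \cong V_1 \otimes \cdots \otimes V_k$ with each $V_i$ a nontrivial irreducible of $\mfh_i$. Because a simple Lie algebra has no nontrivial one-dimensional representations, each $d_i := \dim V_i \ge 2$. Writing $w = w_1 + \cdots + w_k$ with $w_i \in \mfh_i$, I would then verify that each $w_i$ is itself nilpotent: the operators $\mathrm{Id} \otimes \cdots \otimes (w_i)_s \otimes \cdots \otimes \mathrm{Id}$ are pairwise commuting and diagonalizable, so their sum is the semisimple part of $w$, which must vanish; comparing eigenvalues on tensor products of eigenvectors together with the trace-zero condition $\mfh_i \subseteq \sl(V_i)$ then forces each $(w_i)_s = 0$.

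The heart of the argument is a rank lemma: if $V = V_A \otimes V_B$ with $\dim V_A, \dim V_B \ge 2$ and $T = A \otimes \mathrm{Id} + \mathrm{Id} \otimes B$ with $A, B$ nonzero nilpotent, then $\mathrm{rank}(T) \ge \max(\dim V_A, \dim V_B)$. To see this I would pick $v_A$ with $Av_A \ne 0$; nilpotency forces $v_A$ and $Av_A$ to be linearly independent, so decomposing $V_A = \langle v_A\rangle \oplus W$ with $Av_A \in W$, the map $v_B \mapsto T(v_A \otimes v_B) = Av_A \otimes v_B + v_A \otimes Bv_B$ sends its two summands into the complementary subspaces $W \otimes V_B$ and $\langle v_A\rangle \otimes V_B$, and the first piece is already injective; hence $\mathrm{rank}(T) \ge \dim V_B$, and by symmetry $\ge \dim V_A$.

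The case analysis is then short. If only one $w_i$ is nonzero, $\mathrm{rank}(w) = \mathrm{rank}(w_i)\prod_{j \ne i}d_j$; the bound $\le 2$ forces $k=2$, $d_2 = 2$, $\mathrm{rank}(w_i) = 1$, but then $w^2 = w_i^2 \otimes \mathrm{Id} = 0$ while $\mathrm{rank}(w) = 2$, contradicting the hypothesis. If two or more $w_i$ are nonzero, grouping factors as $V_A = V_1 \otimes V_2$ and $V_B = V_3 \otimes \cdots$ and applying the lemma (once if the $V_B$-component of $w$ is nonzero; otherwise using $\mathrm{rank}(w) \ge \mathrm{rank}(w|_{V_A}) \cdot \dim V_B$ with both factors $\ge 2$) forces $k = 2$ and $d_1 = d_2 = 2$. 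In this final case each $\mfh_i$ is a nonzero simple subalgebra of $\sl(V_i) = \sl_2$, hence equals $\sl_2$, and $\mfh \cong \sl_2 \oplus \sl_2$ acts on $V \cong V_1 \otimes V_2 \cong M_2(\C)$ by $M \mapsto g_1 M g_2^T$, preserving the determinant form; a dimension count ($\dim(\sl_2 \oplus \sl_2) = 6 = \dim \so_4$) identifies $\mfh$ with $\so_Q(V)$ for a nondegenerate $Q$, so that $\dim V = 4$ and $\mathrm{rank}(w) = 2$, as required. The main obstacle I anticipate is booking the nilpotency of each $w_i$ and the rank lemma cleanly; once those are in place the case split is mechanical.
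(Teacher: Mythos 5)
Your proof is correct and follows essentially the same approach as the paper: factor $V$ as a tensor product of irreducibles, observe each factor has dimension at least $2$, and obtain a contradiction from a rank lower bound coming from the identity $w(v_1\otimes v_2)=w_1v_1\otimes v_2+v_1\otimes w_2v_2$. Your "rank lemma'' is exactly the paper's device of fixing $v_2$ with $w_2v_2\not\in\langle v_2\rangle$ and noting $v_1\mapsto w(v_1\otimes v_2)$ is injective.

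The only substantive difference is organizational. The paper splits $\mfh$ into \emph{two} nontrivial semisimple ideals $\mfh_1\oplus\mfh_2$, rules out $w_1=0$ or $w_2=0$ with a single line (rank considerations plus the $w^2\ne 0$ hypothesis), and then immediately gets $\dim V_1=\dim V_2=2$; the fact that a faithful two-dimensional irreducible forces $\mfh_i\cong\sl_2$ then finishes it. You instead decompose into all $k$ simple ideals, which gives a cleaner statement of the rank lemma but costs you a two-way case split (one $w_i$ nonzero vs.\ at least two) to force $k=2$; both branches work, though your Case A relies tacitly on a rank-one nilpotent squaring to zero, which is true but worth a word. You also explicitly verify that each component $w_i$ is nilpotent via the Jordan decomposition of a sum of commuting operators; the paper takes this as known (it follows at once from the fact that the projection $\mfh\to\mfh_i$ is a Lie algebra homomorphism and hence preserves the abstract Jordan decomposition, which is a slightly shorter route than the eigenvalue-comparison argument you sketch). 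Neither difference affects correctness.
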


\begin{proof}
Write $\mfh$ as a direct sum $\mfh_1 + \mfh_2$ of nontrivial semisimple Lie algebras. From the parameterization of the irreducible representations of semisimple Lie algebras in terms of weights, we know $V$ is a subrepresentation of $V_1 \otimes V_2$, where $V_i$ is an irreducible representation of $\mfh_i$. By the Weyl dimension formula \cite[24.3]{Hump72}, we find that $V = V_1 \otimes V_2$. As nontrivial representations of semisimple Lie algebras, $V_1$ and $V_2$ are at least $2$-dimensional.

Write $w$ in the form $(w_1, w_2)$ in $\mfh_1 + \mfh_2$. Then, for $v_1 \in V_1$ and $v_2 \in V_2$,
\[w(v_1 \otimes v_2) = w_1(v_1) \otimes v_2 + v_1 \otimes w_2(v_2).\]
If $w_2 = 0$, then the image of $w$ contains $w_1(V_1) \otimes w_2$. This image has dimension at least $2$; if $w^2$ is nonzero, it must have dimension at least $4$. Neither is possible, so $w_2$ is nonzero. Similarly, $w_1$ is nonzero.

Fix $v_2 \in V_2 $ so $w_2(v_2)$ is not a multiple of $v_2$, as is possible since $w_2$ is nonzero and nilpotent. Then $v_1 \mapsto w(v_1 \otimes v_2)$ is an injective map from $V_1$ to the image of $w$. So $w$ has rank $2$, and $V_1$ has dimension $2$. Similarly, $V_2$ has dimension $2$.

The only irreducible two-dimensional representation of a simple complex Lie algebra is the standard representation of $\sl_2(\C)$, so $\mfh_1 \cong \mfh_2 \cong \mathfrak{sl}_2 (\C)$ and $V_i$ is given by the standard representation of $\mfh_i$. Then $\mfh$ may be identified with the orthogonal Lie algebra associated to some nondegenerate quadratic form on the four-dimensional space $V$ \cite[18.2]{FH04}.
\end{proof}

\subsection{The case of simple  $\mfh$}
We now wish to prove Theorem \ref{thm:ss_complex} under the condition that $\mfh$ is simple. Choose a Borel subalgebra $\mfb$ of $\mfh$ containing $w$ \cite[16.3]{Hump72}. We may write $\mfb$ in the form
\[\mfb = \mft \oplus \bigoplus_{\alpha \in \Phi^+} L_{\alpha},\]
where $\mft$ is a maximal toroidal subalgebra of $\mfh$, where $\Phi^+$ denotes the set of positive roots in a root system
\[\Phi \subseteq \mft^* := \Hom\left(\mft, \C\right)\]
associated to $\mft$, and where $L_{\alpha}$ denotes the $\alpha$-eigenspace of $\mfh$ for any $\alpha$ in $\Phi$.

Given an irreducible finite dimensional $\mfh$-representation $W$ and $\beta \in \mft^*$, we take $W_{\beta}$ to be the the $\beta$-eigenspace of $W$. The set of $\beta$ such that $W_{\beta}$ is nonzero for some such $W$ forms a lattice $\Lambda$ in the rational span of the roots. The roots generate a sublattice $\Lambda_r$ of this lattice.

We then have a decomposition
\[V = \bigoplus_{\beta \in \Lambda} V_{\beta}.\]
Among the $\beta$ such that $V_{\beta}$ is nonzero, there is a unique choice of $\beta$ such that $V_{\beta + \alpha} = 0$ for all $\alpha \in \Phi^+$. Furthermore, this $\beta$ determines the representation, and $V_{\beta}$ is one-dimensional \cite[20.1]{Hump72}.  We call it the \emph{greatest weight} of $V$. This lies in the Weyl chamber of $\Lambda$.

Take $\Lambda(V)$ to be the collection of $\beta$ in $\Lambda$ such that $V_{\beta}$ is nonzero. This is the least set containing the greatest weight of $V$ such that the following properties are satisfied:
\begin{enumerate}
\item Given $\tau$ in the Weyl group associated to $\Phi$, if $\beta$ is in $\Lambda(V)$, so is $\tau \beta$.
\item If $\beta$ lies in $\Lambda(V)$ and $\alpha$ lies in $\Phi$, and if $\beta + \alpha$ is in the convex hull of $\Lambda(V)$, then $\beta + \alpha$ lies in $\Lambda(V)$.
\end{enumerate}
See \cite[21.2 and 21.3]{Hump72} for more details.

We write 
\[w = \sum_{\alpha \in \Phi^+} w_{\alpha},\]
 where $w_{\alpha}$ lies in $L_{\alpha}$ for each $\alpha$ in $\Phi^+$. Given  $\beta$ in $\Lambda(V)$ and $\alpha$ in  $\Phi^+$, we have the key properties
\begin{alignat}{2}
\label{eq:translation}
&w_{\alpha} V_{\beta} \subseteq V_{\alpha + \beta}\quad&&\text{and}\\
\label{eq:nonvanishing}
&w_{\alpha}V_{\beta} \ne 0 \quad&&\text{if }\,\, w_{\alpha} \ne 0\,\,\text{ and }\, \,\alpha + \beta \in \Lambda(V).
\end{alignat}
See \cite[20.1 and 21.3]{Hump72}.

Take $\Lambda(w)$ to be the set of  $\alpha \in \Phi^+$ such that $w_{\alpha}V$ is nonzero. We call $\alpha \in \Lambda(w)$ \emph{minimal} if there is some total ordering on $\Lambda$ respecting addition such that every root in $\Phi^+$ is greater than $0$ and under which $\alpha$ is the minimal element of $\Lambda(w)$.
\begin{lem}
\label{lem:rank_bnd}
Choose a minimal root $\alpha$ in $\Lambda(w)$. Then the rank of $w$ is no smaller than the number of $\beta$ in $\Lambda(V)$ such that $\beta + \alpha$ is also in $\Lambda(V)$.
\end{lem}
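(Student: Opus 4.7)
The plan is to exhibit, for each $\beta$ in the set $S := \{\beta \in \Lambda(V) : \beta + \alpha \in \Lambda(V)\}$, a single vector in the image of $w$, and then to show these $|S|$ vectors are linearly independent in $V$. That immediately yields the claimed rank bound.

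For each $\beta \in S$ I would choose a nonzero $u_\beta \in V_\beta$ with $w_\alpha u_\beta \ne 0$. Such a vector exists by \eqref{eq:nonvanishing}, since $\alpha \in \Lambda(w)$ ensures $w_\alpha \ne 0$ and $\beta \in S$ ensures $\alpha + \beta \in \Lambda(V)$. Property \eqref{eq:translation} gives the weight-space decomposition $w u_\beta = \sum_{\alpha' \in \Lambda(w)} w_{\alpha'} u_\beta$ with $w_{\alpha'} u_\beta \in V_{\beta + \alpha'}$, whose $V_{\beta + \alpha}$-component is exactly $w_\alpha u_\beta$, and hence nonzero.

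To prove independence, I would suppose $\sum_{\beta \in S} c_\beta \cdot w u_\beta = 0$ holds with some $c_\beta \ne 0$, and let $\beta_0$ be the minimum of $\{\beta \in S : c_\beta \ne 0\}$ under the total ordering on $\Lambda$ that witnesses the minimality of $\alpha$ in $\Lambda(w)$. Projecting the relation onto $V_{\beta_0 + \alpha}$, the surviving contributions come only from pairs $(\beta, \alpha')$ with $c_\beta \ne 0$, $\alpha' \in \Lambda(w)$, and $\beta + \alpha' = \beta_0 + \alpha$. Rewriting this as $(\beta - \beta_0) + (\alpha' - \alpha) = 0$ with both summands nonnegative forces $\beta = \beta_0$ and $\alpha' = \alpha$. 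So the projection reduces to $c_{\beta_0} w_\alpha u_{\beta_0}$, contradicting $c_{\beta_0} \ne 0$ together with $w_\alpha u_{\beta_0} \ne 0$.

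There is no real obstacle: the definition of minimality of $\alpha$ is tailored exactly to force this two-sided nonnegativity, which is what collapses the projection onto $V_{\beta_0 + \alpha}$ to a single nonvanishing term. The only step that requires any selection is the choice of the $u_\beta$, and \eqref{eq:nonvanishing} supplies it immediately.
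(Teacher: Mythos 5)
Your proof is correct and essentially the same as the paper's: both arguments pick, for each $\beta$ in the set $S$, a vector $u_\beta \in V_\beta$ with $w_\alpha u_\beta \neq 0$ via \eqref{eq:nonvanishing}, and then establish linear independence of the images $wu_\beta$ by projecting a hypothetical dependence onto the weight space $V_{\beta_0 + \alpha}$ for the extremal $\beta_0$, using the translation-invariant total order that witnesses minimality of $\alpha$ to kill all cross-terms. The paper phrases this as a triangular structure indexed by a decreasing enumeration of $S$, while you phrase it as a contradiction via the minimal nonzero coefficient, but the mechanism is identical.
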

\begin{proof}
Fix a total ordering as above such that $\alpha$ is minimal in $\Lambda(V)$. Take $\beta_1, \dots, \beta_r$ to be the distinct elements in $\Lambda(V)$ such that $\beta_1 + \alpha,  \dots, \beta_r + \alpha$ are also in $\Lambda(V)$. We assume $\beta_1 > \dots > \beta_r$ under our choice of ordering. Then we have
\begin{alignat*}{2}
&w V_{\beta_i} \quad\text{has nontrivial image in}\quad &&V_{\beta_i + \alpha} \quad\text{for } \,i \le r \quad\text{and}\\
&wV_{\beta_i} \quad\text{has trivial image in} &&V_{\beta_j + \alpha} \quad\text{for } \,i < j \le r.
\end{alignat*}
Then $w$ must have rank at least $r$.
\end{proof}

\begin{lem}
\label{lem:exceptional}
In the context of Theorem \ref{thm:ss_complex}, $\mfh$ cannot be an exceptional Lie algebra.
\end{lem}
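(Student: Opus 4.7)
The plan is to prove the stronger claim: for every exceptional simple complex Lie algebra $\mfh$ and every nontrivial irreducible representation $V$, no nonzero nilpotent $w \in \mfh$ satisfies the rank hypothesis of Theorem \ref{thm:ss_complex}, namely $\operatorname{rank}(w|_V) \leq 2$ together with $w^2 \ne 0$ when equality holds. Lemma \ref{lem:exceptional} then follows by contraposition.

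The key tool is Lemma \ref{lem:rank_bnd}. After conjugating $w$ into a Borel $\mfb$ and writing $w = \sum_{\alpha \in \Phi^+} w_\alpha$, for any minimal $\alpha \in \Lambda(w)$ one has
$$ \operatorname{rank}(w|_V) \;\geq\; N(\alpha,V) := \#\{\beta \in \Lambda(V) : \beta + \alpha \in \Lambda(V)\}. $$
If $N(\alpha,V) \geq 3$ for some such $\alpha$, the argument is complete. I will verify this inequality case by case over $\mfh \in \{G_2, F_4, E_6, E_7, E_8\}$ and the fundamental representations $V$, which together with their duals generate all faithful irreducibles. In the majority of cases the weight diagram of $V$ is dense enough that $N(\alpha,V) \geq 3$ holds for every positive root $\alpha$, since in a simple rank-$\ge 2$ algebra each positive root $\alpha$ generates together with some non-orthogonal simple root a rank-$2$ subsystem whose root strings on $V$ hit at least three weights.

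Edge cases arise where $N(\alpha,V) \in \{1,2\}$, typically when $\alpha$ is a long root in a small representation, as with $\alpha = \alpha_2$ in the $7$-dimensional representation of $G_2$ (where direct inspection gives $N(\alpha_2, V_7) = 2$). For such $\alpha$ I will check directly that $w_\alpha^2 = 0$ on $V$, so a pure root vector $w = w_\alpha$ is excluded by the Theorem \ref{thm:ss_complex} hypothesis. When $w$ is not a pure root vector but its minimal roots all fall into such edge cases, I will show that additional terms $w_{\alpha'}$ with $\alpha' \in \Lambda(w) \setminus \{\alpha\}$ force further rank contributions from weight spaces untouched by $w_\alpha$, again forcing rank at least $3$.

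The main obstacle is the case analysis itself, especially in tracking $N(\alpha,V)$ across all positive roots $\alpha$ in the larger exceptional algebras. A cleaner conceptual alternative, which I may adopt in the final write-up, is to use the Jacobson--Morozov theorem to embed $w$ in an $\mathfrak{sl}_2$-triple and decompose $V$ into $\mathfrak{sl}_2$-modules; the rank of $w$ on $V$ then equals the number of nontrivial Jordan blocks. The nilpotent orbits in each exceptional Lie algebra and their action on the fundamental representations are tabulated (e.g.\ in Collingwood--McGovern), so the minimum possible rank of a nonzero nilpotent is attained on the minimal nilpotent orbit and can simply be read off. One finds that this minimum is at least $3$ in every exceptional case, which closes the argument uniformly.
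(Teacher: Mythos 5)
Your high-level strategy matches the paper's: conjugate $w$ into a Borel, invoke Lemma \ref{lem:rank_bnd} to reduce to showing $N(\alpha,V)\ge 3$ for a minimal $\alpha\in\Lambda(w)$, and do case analysis over the exceptional types. The gap is in the mechanism for actually bounding $N(\alpha,V)$. The paper's key device is to locate a large root subsystem $\Psi$ \emph{orthogonal} to $\alpha$ (using extended Dynkin diagrams: $A_1$ inside $G_2$, $C_3$ or $A_3$ inside $F_4$, $A_5$ inside $E_6$, $D_6$ inside $E_7$, $A_7$ inside $E_8$). Since $W(\Psi)$ fixes $\alpha$ and preserves $\Lambda(V)$, it acts on pairs $(\beta,\beta+\alpha)$ with both in $\Lambda(V)$, so the size of the $W(\Psi)$-orbit of $\beta$ lower-bounds $N(\alpha,V)$; these orbits are uniformly large except for $G_2$. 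Your substitute---that ``each positive root $\alpha$ generates together with some non-orthogonal simple root a rank-$2$ subsystem whose root strings on $V$ hit at least three weights''---does not give a bound on $N(\alpha,V)$: a long root string through a single $\beta$ contributes only one pair $(\beta,\beta+\alpha)$, and what you need is \emph{many distinct} $\beta$'s with $\beta,\beta+\alpha\in\Lambda(V)$, which is exactly what the orthogonal-subsystem trick delivers. Without it, your case analysis has no tractable handle.

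The $G_2$ edge case is where your plan is genuinely incomplete, and it is not just a matter of grinding through more computation. Your plan is to handle a pure root vector $w=w_\alpha$ by observing $w_\alpha^2=0$, and then claim that for general $w$ ``additional terms force further rank contributions.'' The paper's actual argument is sharper: one first shows that any counterexample must have $\alpha$ long and the relevant weight $\beta$ short with $\alpha$-component $\pm\tfrac12$; then the requirement $w^2\ne 0$ forces $\Lambda(w)$ to contain a second long positive root; but there is a unique long positive root expressible as a sum of other long positive roots, so $\Lambda(w)$ must in fact contain \emph{both} of the other long positive roots, driving the rank to at least $4$. That chain of implications is the content of the $G_2$ case, and nothing in your sketch produces it.

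Your alternative via Jacobson--Morozov and Collingwood--McGovern tables rests on a statement that is false as written: for $G_2$ acting on $V_7$, the long-root (minimal) nilpotent orbit has Jordan type $[2^2,1^3]$, so its rank on $V_7$ is exactly $2$, not $\ge 3$. It is excluded only because $w^2=0$, which your stated criterion does not filter for. More fundamentally, the quantity you actually need to minimize is rank subject to the \emph{open} constraint $w^2\ne 0$; lower semicontinuity of rank ensures the unconstrained minimum is attained on the minimal orbit, but the minimal orbit may violate the constraint (as it does for $G_2$), and the constrained minimum need not be attained there. So ``read the answer off the minimal orbit'' is not a valid reduction. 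The table-lookup idea could be salvaged---one would instead ask which orbits have Jordan type $[2,1^{\dim V-2}]$ or $[3,1^{\dim V-3}]$ on some nontrivial irreducible $V$, and rule each out---but that is a different and more delicate check than the one you describe, and it would have to range over all irreducibles (your monotonicity reduction to fundamental weights needs to be stated and restricted to comparable highest weights in the same coset of the root lattice).
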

\begin{proof}
Assume otherwise, so that $\mfh$ is exceptional. Since $w$ has positive rank, $\Lambda(w)$ is nonempty. Choose  a minimal root $\alpha$ in $\Lambda(w)$. In the case that $\Phi$ is $G_2$, we assume $\alpha$ is short if there is a short root in $\Lambda(w)$.

Choose $\beta$ in $\Lambda(V)$ so $\beta + \alpha$ is also in $\Lambda(V)$. Applying an element in the Weyl group if necessary, we may assume that $\beta$ is not a multiple of $\alpha$. 

Then, applying the theory of extended Dynkin diagrams \cite[Table 8]{Dynk57a} and the fact that the Weyl group is transitive on the roots of a given length, we may find a root subsystem $\Psi$ orthogonal to $\alpha$ such that
\[\Psi  \cong \left\lbrace\begin{array}{ll}
 A_1 \,\text{ if } \Phi \cong G_2& A_5 \,\text{ if } \Phi \cong E_6 \\ 
 C_3 \,\text{ if } \Phi \cong F_4 \text{ and } \alpha \text{ is a long root} \quad\qquad& D_6 \,\text{ if } \Phi \cong E_7 \\
 A_3 \,\text{ if } \Phi \cong F_4 \text{ and } \alpha \text{ is a short root}&A_7 \,\text{ if } \Phi \cong E_8.\\
\end{array}
\right.\]
Choosing $c \in \tfrac{1}{2}\Z$ so $\beta + c \alpha$ is orthogonal to $\alpha$, we see from Lemma \ref{lem:rank_bnd} that the rank of $w$ is at least equal to the number of elements in the orbit of $\beta + c\alpha$ under the Weyl group of $\Psi$. Given a root system of type $A_n$, we see that the orbit of any nonzero vector in the rational span of the roots under the Weyl group has size at least $n+1$; for $C_3$, such an orbit has size at least $6$; for $D_6$, it has size at least $12$. This handles all the possible exceptional cases except for $\Phi \cong G_2$.

In this case, there are distinct elements $\beta$ and $\beta'$ in $\Lambda(V)$ whose $\alpha$ components are the same and negative and such that $\beta + \alpha$ and $\beta' + \alpha$ are also in $\Lambda(V)$. If $\beta$ does not have $\alpha$ component $-1/2$, then $\beta + 2\alpha$ and $\beta' + 2\alpha$ are also in $\Lambda(V)$, so $w$ has rank at least $4$, contradicting the assumptions of Theorem \ref{thm:ss_complex}.

So we assume that $\beta$ has $\alpha$ component $-1/2$. If $\alpha$ was in $\Lambda(V)$, we would again find $w$ had rank at least $4$ by applying Lemma \ref{lem:rank_bnd} to $\{\beta, \beta', -\alpha, 0 \}$. So we are left with the case that $\beta$ has $\tau \alpha$ component $-1/2$ or $1/2$ for all $\tau$ in the Weyl group.

This forces $\alpha$ to be a long root and $\beta$ to be a short root. Since $w$ must have rank $2$,  $w^2$ must be nonzero. This implies that $\Lambda(w)$ must contain a second long root. Indeed, there is a unique long positive root that is the sum of other long positive roots; for $w^2$ to be nonzero, we find $\Lambda(w)$ must contain both of the other long positive roots. But we then find that $w$ has rank at least $4$, a contradiction.
\end{proof}

There is one special case that cannot be ruled out from the geometry of $\Lambda(V)$. We handle it next.
\begin{lem}
\label{lem:standard_symplectic}
In the context of Theorem \ref{thm:ss_complex}, if $\mfh$ is a symplectic Lie algebra and $w$ has rank $2$, then $V$ cannot be the standard representation of $\mfh$.
\end{lem}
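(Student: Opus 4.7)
The plan is to analyze the Jordan type of $w$ in the standard representation and then rule it out using $\mathfrak{sl}_2$-representation theory. Throughout, suppose for contradiction that $\mfh = \mathfrak{sp}(V)$ acts on the standard representation $V$, equipped with the defining symplectic form $\omega$, and that $w \in \mathfrak{sp}(V)$ is nilpotent of rank $2$ with $w^2 \ne 0$.

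First, I would classify the Jordan type of $w$. Writing the Jordan partition as $k_1 \ge k_2 \ge \cdots \ge k_r$, the identity $\textup{rank}(w) = \sum_i(k_i - 1) = 2$ leaves only two options: two blocks of size $2$, or one block of size $3$. The assumption $w^2 \ne 0$ rules out the first, so $w$ has Jordan type $(3, 1, 1, \dots, 1)$.

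Next, I would invoke the Jacobson--Morozov theorem to extend $w$ to an $\mathfrak{sl}_2$-triple inside $\mathfrak{sp}(V)$, making $V$ a symplectic $\mathfrak{sl}_2$-module whose irreducible summands have dimensions equal to the Jordan block sizes of $w$. Thus $V \cong L_3 \oplus L_1^{\oplus 2n-3}$ as an $\mathfrak{sl}_2$-module, where $L_m$ denotes the irreducible of dimension $m$.

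Finally, I would contradict the non-degeneracy of $\omega$. By Schur's lemma, any $\mathfrak{sl}_2$-invariant pairing between non-isomorphic irreducibles vanishes, so $\omega$ vanishes on $L_3 \otimes L_1^{\oplus 2n-3}$. Moreover, $L_3$ admits a unique invariant bilinear form up to scalar, which a short highest-weight computation shows is symmetric; hence $\omega|_{L_3}$ must vanish as well, placing the entire summand $L_3$ in the radical of $\omega$. This contradicts the non-degeneracy of the symplectic form on $V$. The main obstacle is the parity statement in this last step---that the invariant form on $L_m$ is $(-1)^{m-1}$-symmetric---but it follows from Clebsch--Gordan, or alternatively one may cite the classification of nilpotent orbits in $\mathfrak{sp}_{2n}$, which requires odd parts of the Jordan partition to occur with even multiplicity.
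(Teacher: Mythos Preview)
Your argument is correct. The Jordan-type computation, the Jacobson--Morozov step, and the parity observation about the invariant form on $L_3$ all go through as you describe; alternatively, as you note, one can simply quote that nilpotent orbits in $\mathfrak{sp}_{2n}$ correspond to partitions of $2n$ in which every odd part has even multiplicity, immediately excluding the type $(3,1,\dots,1)$.

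The paper's proof, however, is considerably more elementary and avoids Jacobson--Morozov and the orbit classification entirely. It simply observes that for $w\in\mathfrak{sp}(V)$ one has $B(w^2 v,v)=-B(wv,wv)=0$ for every $v$; since $w^2V$ is one-dimensional and the vectors $v$ with $w^2v\neq 0$ span $V$, this forces $w^2V$ into the radical of $B$, contradicting nondegeneracy. Your route has the advantage of situating the lemma inside the general structure theory of nilpotent orbits, which makes the obstruction conceptually transparent and would generalize more readily. The paper's route buys brevity and self-containment: it is a three-line computation using only the defining identity $B(wv_1,v_2)+B(v_1,wv_2)=0$, with no appeal to $\mathfrak{sl}_2$-triples or representation-theoretic decomposition.
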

\begin{proof}
Suppose otherwise. Then there is a nondegenerate skew-symmetric form $B$ on $V$ such that
\[B(wv_1, v_2) + B(v_1, wv_2) = 0 \quad\text{for all } \, v_1, v_2 \in V.\]
So
\[B(w^2v, v) = -B(wv, wv) = 0\quad\text{for all }\, v \in V.\]
The set of vectors $v$ such that $w^2v$ is nonzero generates $V$ since $w^2$ is nonzero, so the one dimensional space $w^2V$ must be in the kernel of $B$. But this contradicts the nondegeneracy of $B$.
\end{proof}

We now handle the classical cases of Theorem \ref{thm:ss_complex}.
\subsubsection*{Proof of Theorem \ref{thm:ss_complex}}
Take all the setup as above. By Lemma \ref{lem:exceptional}, we may assume that $\mfh$ is a simple classical Lie algebra. We may also assume that $\Lambda(w)$ is nonempty. Choose some minimal root $\alpha$ in $\Lambda(w)$. 

We give some standard setup for the classical root systems; the results we quote can be found in \cite{FH04}. If $\Phi$ is $B_n$, $C_n$, or $D_n$, we take $e_1, \dots, e_n$ to be the standard orthonormal basis to $\R^n$. The positive roots in $\Phi$ may then be identified with
\[ e_i - e_j \,\text{ and }\, e_i + e_j \,\text{ for }\, 1 \le i < j \le n \,\text{ together with } \begin{cases} e_i \,\,\text{ for }  i \le n&\text{ if } \Phi \cong B_n \\ 2e_i\,\, \text{ for } i \le n &\text{ if } \Phi \cong C_n \\ \text{Nothing else} &\text{ if } \Phi \cong D_n.\end{cases}\] 
In the case $A_n$, we instead take $e_1, \dots, e_{n+1}$ to be the standard orthonormal basis for $\R^{n+1}$ and identify the positive roots with
\[e_i - e_j \text{ for } \, 1 \le i < j \le n.\]
We take $e = \tfrac{1}{n+1}(e_1 + \dots + e_{n+1})$ in this case.

Take $\beta$ to be the greatest weight of $V$. We first prove the theorem in the cases
\begin{align*}
(\Phi, \beta) \,\,\,  \cong \,\,\, (A_n, \,e_1 - e),\,\, (B_n,\, e_1),\,\, (C_n,\,e_1),\,\, (D_n, e_1).
\end{align*}
These all correspond to the standard representation of a classical Lie algebra \cite{FH04}. In the $A_n$ case, this forces $\mfh \cong \slV$. In the $B_n$ and $D_n$, this forces $\mfh \cong \mathfrak{so}_Q V$ for some $Q$; we note in these cases that $w$ must have rank $2$ by Lemma \ref{lem:rank_bnd}. We have already handled the $C_n$ case in Lemma \ref{lem:standard_symplectic}.

Via Dynkin diagram isomorphisms, the above work for the standard representations allows us to conclude the theorem for
\begin{align*}(\Phi, \beta) \,\,  \cong \,\, &(A_1, \,2e_1 - 2e),\,\, (A_3, e_1 + e_2 - 2e),\,\, (A_n,\, e_1 + \dots + e_n-ne) ,\\
&\left(B_2,\, \tfrac{1}{2}(e_1+e_2)\right),\,\,\left(D_4,\, \tfrac{1}{2}(e_1 + e_2 + e_3 + e_4)\right),\,\, \left(D_4,\, \tfrac{1}{2}(e_1 + e_2 + e_3 - e_4)\right).
\end{align*}

All remaining cases may be handled by appealing to Lemma \ref{lem:rank_bnd}. Specifically, take $\beta_1, \dots, \beta_k$ to be weights in $\Lambda(V)$ with disjoint orbits under the Weyl group, and take $r(\beta_i, \alpha)$ to be the number of elements in the orbit of $\beta_i$ not orthogonal to $\alpha$. Then Lemma \ref{lem:rank_bnd} gives that $H$ has rank at least $\tfrac{1}{2}r(\beta_1, \alpha) + \dots + \tfrac{1}{2}r(\beta_k, \alpha)$. We note for convenience that $r(\beta_i, \alpha)$ only depends on $\alpha$ insofar as it depends on the length of $\alpha$.

\textbf{Case: $\Phi \cong A_n\quad$} 
In this case, we may write $\beta$ in the form
\[a_1(e_1 - e) + \dots + a_n(e_n - e)\]
with $a_1 \ge \dots \ge a_n$ nonnegative integers. Taking $a_{n+1} = 0$, the Weyl orbit of this weight consists of elements of the form
\[a_{\sigma(1)}(e_1 - e) + \dots + a_{\sigma(n)}(e_n - e),\]
where $\sigma$ is a permutation of $\{1, \dots, n+1\}$. 

Take $i \le n$ maximal so $a_i$ is nonzero. If $a_1 \ne a_i$, we see that $r(\beta, e_1 - e_2)$ is at least $6$, as there are at least $6$ possible values for the tuple $(a_{\sigma(1)}, a_{\sigma(2)})$ with $a_{\sigma(1)} \ne a_{\sigma(2)}$. So we may assume $a_1 = a_i$.

If $n = 1$, we find that $w$ has rank at least $a_1$, so our work above handles this case.

Assuming $a_1 = a_i$ and $n > 1$, we have
\[r(\beta, e_1 - e_2) = 2\binom{n-1}{n - i}\]
(first choose which of $a_{\sigma(1)}$ and $a_{\sigma(2)}$ is zero, then choose which of $a_{\sigma(3)}, \dots, a_{\sigma(n+1)}$ are zero). This is at least $6$ unless $i = 1$ or $i = n$, where it equals $2$,  or $i = 2$ and $n = 3$, where it equals $4$.

Suppose we are in one of these final cases with $a_1 = a_i$ and $i > 1$. We already handled these cases when $a_1 = 1$. So suppose $a_1 > 1$. If $i < n$, then $\beta_0  = \beta - e_i + e_{i+1}$ is in $\Lambda(V)$, and we find $r(\beta_0, \alpha) \ge 6$. The case $i = n$ is equivalent to that of $i = 1$ under the Dynkin diagram automorphism, finishing the proof for $A_n$.

\textbf{Case: $\Phi \cong B_n$ with $n \ge 2$\quad}
 We may write $\beta$ in the form
\[\tfrac{1}{2}(a_1e_1 + \dots + a_n e_n)\]
where the $a_1 \ge \dots \ge a_n$ are nonnegative integers of the same parity. The orbit of this under the Weyl group consists of the weights
\[\tfrac{1}{2}\left(\pm a_{\sigma(1)}e_1 \pm \dots \pm a_{\sigma(n)}e_n\right),\]
where the signs vary over all $2^n$ possibilities and $\sigma$ is a permutation of $\{1, \dots, n\}$. Take $i \le n$ maximal so $a_i$ is positive.  By considering the set of $a_{\sigma(i)}$ that equal zero, and by keeping track of the sign on the remaining coefficients, we have
\[r(\beta, e_1) \ge \binom{n-1}{n - i} 2^i\quad\text{and}\quad r(\beta, e_1 - e_2) \ge \binom{n-2}{n-i}2^{i-1} + \binom{n-2}{n - i - 1}2^{i+1}.\]
More specifically, the latter relationship is found by separately considering the case that $\tau \beta$ has nontrivial $e_1$ and $e_2$ component, and where it has one of these components trivial.

Both these expressions are at least $8$ for $n \ge 4$ unless $i = 1$. So suppose $i = 1$ and $n \ge 2$. We have handled the case $a_1 = 2$, $a_2 = 0$, and the case $a_1 > 2$, $a_2 = 0$ is straightforward since $\Lambda(V)$ also contains $\beta_2 = e_1 + e_2$ and $r(\beta, \alpha) + r(\beta_2, \alpha) \ge 6$.

For $n = 3$ and $i > 1$, both expressions are at least $8$ unless $i = 3$, where the former is $8$ but the latter is $4$. So, in this case, we need to handle the situation where $\Lambda(w)$ contains no short root. The case $a_1  > 1$ is straightforward, leaving the weight for the spin representation $\tfrac{1}{2}(e_1 + e_2 + e_3)$. Because $\Lambda(w)$ contains no short root, we find that $w$ fixes the subspaces
\[\bigoplus_{\substack{s_1, s_2, s_3 \in \pm 1 \\ s_1 \cdot s_2 \cdot s_3 = 1}} V_{\frac{1}{2}(s_1e_1 + s_2e_2 + s_3e_3)}  \quad\text{and}\quad \bigoplus_{\substack{s_1, s_2, s_3 \in \pm 1 \\ s_1 \cdot s_2 \cdot s_3 = -1}} V_{\frac{1}{2}(s_1e_1 + s_2e_2 + s_3e_3)}\]
and has nontrivial image in each of them, so $w^2$ is nonzero. But for this to happen, $w$ must have rank $2$ restricted to one of these subspaces, which would imply that $w$ has rank at least $3$.

This just leaves the case $n = 2$ with $a_2 > 0$. In this case, $r(\beta, \alpha)$ is at least $8$ unless $a_1 = a_2$, where it equals $2$ or $4$. We handled the case $a_1 = a_2 = 1$ above. Finally, if $a_1 = a_2 > 1$, then  $\Lambda(V)$ contains $\beta_0 = \beta - e_2$, and $r(\beta_0, \alpha) \ge 4$.

\textbf{Case: $\Phi \cong C_n$ with $n \ge 3$\quad}
 We may write $\beta$ in the form
\[a_1e_1 + \dots + a_ne_n,\]
where $a_1 \ge \dots \ge a_n$ are nonnegative integers. The orbit of this under the Weyl group consists of elements of the form
\[\pm a_{\sigma(1)}e_1 \pm \dots \pm a_{\sigma(n)}e_n.\]
Defining $i \le n$ to be maximal so $a_i$ is nonzero, we may bound $r(\beta, 2e_1)$ and $r(\beta, e_1 - e_2)$ in terms of $i$ and $n$ as in the $B_n$ case. For $n \ge 3$, this just leaves the case $i =1$  and the case $n = i = 3$. We handled the case $i  = 1$, $a_1 = 1$ above. The case $i = 1$,  $a_1 > 1$ can be handled by noting that $(a_1 - 1) e_1 +  e_2$ is also in $\Lambda(V)$. Finally, the case $n = i = 3$ can be handled by noting that  $a_1e_1 + (a_1 -1)(e_2 + e_3) $ is also in $\Lambda(V)$.

\textbf{Case: $\Phi \cong D_n$ with $n \ge 4$\quad}
We may write $\beta$ in the form
\[\tfrac{1}{2}(a_1e_1 + \dots + a_ne_n)\]
where the $a_i$ are integers of the same parity and $a_1 \ge \dots \ge a_{n-1} \ge |a_n|$. The orbit of this weight under the Weyl group consists of the weights
\[\tfrac{1}{2}\left(\pm a_{\sigma(1)}e_1 \pm \dots \pm a_{\sigma(n)}e_n\right),\]
where the number of negative signs in this expression is even. Taking $i$ to be maximal so $a_i$ is nonzero, there are at least  
\[\binom{n-2}{n-i} 2^{\min(n-2, i - 1)} + \binom{n-2}{n-i-1} 2^{i+1}\]
weights in this orbit not orthogonal to $e_1 - e_2$.  This is at least $5$ unless $i =1$ or $n = 4$ and $i = 4$. The former case can be handled as in the previous examples by subtracting $e_1 - e_2$ from $\beta$ when $a_1 > 2$, with $a_1 = 2$ being the standard representation.

In the latter case, we have already handled the case of $a_1 = 1$. The case of $a_1 > 1$ may be handled by noting that either $\beta - e_3 - e_4$ or $\beta - e_3 + e_4$ lies in $\Lambda(V)$.

This was the final Lie algebra to consider, and the theorem is shown. \qed

\subsection{Passing from $\C$ to $\QQ$}
\label{ssec:Q}
The descent to $\QQ$ requires one lemma for the orthogonal case.

\begin{lem}
\label{lem:quadform_descent}
Take $L/K$ to be an extension of number fields, choose a positive integer $n$, and take $\mfh$ to be a Lie subalgebra of $\mathfrak{sl}(K^n)$ such that $\mfh \otimes_K L$ equals $\mathfrak{so}_Q(L^n)$ for some nondegenerate quadratic form $Q$ on $L^n$. Then there is some nondegenerate quadratic form $P$ on $K^n$ such that $\mfh = \mathfrak{so}_{P} (K^n)$.
\end{lem}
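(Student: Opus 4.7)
The plan is to recover $P$ intrinsically from $\mfh$ as a generator of the $K$-space of $\mfh$-invariant symmetric bilinear forms on $K^n$.

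Let $V$ denote the $K$-vector space of symmetric bilinear forms on $K^n$, equipped with its natural $\mfh$-action, and let $W \subseteq V$ be the $K$-subspace of $\mfh$-invariants. Then $W$ is the kernel of the $K$-linear map $V \to \Hom_K(\mfh, V)$ sending $B$ to the assignment $X \mapsto \bigl((v,w) \mapsto B(Xv,w) + B(v,Xw)\bigr)$. Since $L$ is flat over $K$, tensoring commutes with kernels, and since $\mfh$ is finite-dimensional over $K$ we may identify $\Hom_K(\mfh, V) \otimes_K L = \Hom_L(\mfh \otimes_K L, \, V \otimes_K L)$. Hence $W \otimes_K L$ equals the space of $(\mfh \otimes_K L)$-invariant symmetric bilinear forms on $L^n$, i.e.\ the space of $\so_Q(L^n)$-invariants.

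Next I would argue that this $L$-space of $\so_Q(L^n)$-invariants is one-dimensional, spanned by $Q$. We may assume $n \ge 3$, since $n \le 2$ is trivial. In this range $L^n$ is an absolutely irreducible $\so_Q(L^n)$-module: for $n \ne 4$ this follows from the highest-weight theory for types $B_r$ and $D_r$, and for $n = 4$ from the splitting $\so_Q(\overline{L}^4) \cong \sl_2(\overline{L}) \oplus \sl_2(\overline{L})$ with $\overline{L}^4$ the external tensor product of the two standard two-dimensional modules. By Schur's lemma, the $L$-space of invariant symmetric bilinear forms on $L^n$ is then at most one-dimensional, and it contains the nonzero form $Q$, so it equals $L \cdot Q$. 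Therefore $W$ is one-dimensional over $K$.

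Fix any nonzero $P \in W$, and write $P = cQ$ inside $V \otimes_K L$ for some $c \in L^\times$; then $\det P \in K$ equals $c^n \det Q \in L^\times$, so $\det P \ne 0$ and $P$ is nondegenerate. By construction $\mfh \subseteq \so_P(K^n)$, and tensoring with $L$ gives
\[\so_P(K^n) \otimes_K L \;=\; \so_P(L^n) \;=\; \so_{cQ}(L^n) \;=\; \so_Q(L^n) \;=\; \mfh \otimes_K L,\]
so $\mfh$ and $\so_P(K^n)$ have the same $K$-dimension and the inclusion must be an equality. The only step with real content is the absolute irreducibility of the defining representation of $\so_Q$ (with the $n = 4$ case requiring the small extra observation above), and this is classical; I do not expect any serious obstacle.
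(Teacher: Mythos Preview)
Your proof is correct and takes a genuinely different route from the paper's. The paper first reduces to $L/K$ Galois, considers the one-dimensional $L$-space of $\mfh\otimes L$-invariant symmetric matrices, notes it is stable under $\Gal(L/K)$, and invokes Hilbert 90 to produce a $K$-rational generator. You instead work directly over $K$: you define the $K$-space $W$ of $\mfh$-invariant symmetric forms, use flatness of $L/K$ to identify $W\otimes_K L$ with the $L$-space of invariants, and then compute $\dim_K W = 1$. This avoids Galois theory entirely, and you also supply the argument (absolute irreducibility plus Schur) for why the invariant space is one-dimensional, which the paper simply asserts. Your flatness argument is in some sense the more transparent mechanism for descent here, while the paper's Hilbert 90 argument is terser but imports more machinery.

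One minor quibble: calling the case $n \le 2$ ``trivial'' is a slight overstatement. For $n=2$ the standard module is not absolutely irreducible over $\so_Q$, so your Schur argument does not apply directly; however, your flatness framework still goes through once one checks by hand that the space of $\so_Q(L^2)$-invariant symmetric forms is one-dimensional, which is a two-line matrix computation. In any case the paper only invokes this lemma for semisimple $\mfh$, hence $n \ge 3$, so the point is moot for the application.
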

\begin{proof}
We may assume that $L/K$ is Galois. Consider the vector space of symmetric $n \times n$ matrices $M$ with coefficients in $L$ such that $Mx + x^{\top} M = 0$ for all $x \in \mfh \otimes L$. This space is one-dimensional over $L$ and closed under the Galois action of $\Gal(L/K)$. By Hilbert 90, this space is generated by a matrix with coefficients in $K$.
\end{proof}

\begin{proof}[Proof of Theorem \ref{thm:semisimple}]

Suppose $V$ is a direct sum $V_1 \oplus V_2$, where $V_1, V_2$ are rational subspaces closed under the action of $\mfh$. The assumptions on $w$ imply that it acts trivially on either $V_1 \otimes \R$ or on $V_2 \otimes \R$. By the definition of the Lie subalgebra $\mfh$, we find that either $V_1$ or $V_2$ lies in the kernel of $\mfh$. By the assumptions of the theorem, either $V_1$ or $V_2$ is $0$. So $V$ is irreducible.

Furthermore, if we write $\mfh \otimes_{\QQ} \C$ in the form $\mfh_1 \oplus \dots \oplus \mfh_k$ with the $\mfh_i$ simple, we find that $w$ has zero projection to $\mfh_i$ for all but at most two $i \le k$ by Theorem \ref{thm:ss_complex}. Suppose first that it has one nonzero coordinate in this decomposition, say in $\mfh_1$.

 By the minimality assumption for $\mfh$, we find that $\mfh$ is simple. Take $K$ to be the center of the subring of the ring of vector space endomorphisms $\textup{End}_{\QQ} \,\mfh$ generated by the adjoint action of $\mfh$. Then $\mfh$ is absolutely simple as a Lie algebra over $K$ \cite{Jaco37}.

Considered as a $\mfh_1$ representation, $V \otimes_{\QQ} \C$ must equal a nontrivial irreducible representation summed with a number of trivial representations. By considering the Galois action on the $\mfh_i$, we see this is only possible if it takes the form
\[V_1 \oplus \dots \oplus V_k\]
where $V_i$ is nontrivial as an $\mfh_i$ module but is trivial as an $\mfh_j$ module for all $i \ne j$.

We have an isomorphism
\[\mfh \otimes_{\QQ} K \isoarrow \mfh \oplus \mfh'\]
of Lie algebras over $K$, where the map to $\mfh$ is the natural projection. For some embedding $K \hookrightarrow \C$, we may identify $\mfh_1$ with $\mfh \otimes_K \C$.  This embedding must be real, as $w$ would otherwise have nonzero image in the $\mfh_i$ corresponding to the conjugate embedding. 

The image of $\mfh$  in $V \otimes_{\QQ} K$ is then some representation $V_0$ for $\mfh$ over $K$ such that $V_0 \otimes_K \C$ is identified with $V_1$. We may choose $c$ in $K$ so that the map $V \to V \otimes_{\Q} K$ given by $v \mapsto v \otimes c$ then projects to give a nonzero map of $\mfh$ representations $V \to V_0$ over $\Q$. But $V$ and $V_0$ have the same dimension over $\Q$ and $V$ is irreducible, so $V$ and $V_0$ are isomorphic. In this way, we give $V$ the structure of an $\mfh$ representation over $K$.

With this done,  we see that $\mfh \otimes_K \C$ is isomorphic to either $\sl (V) \otimes_K \C$ or $\mathfrak{so} (V) \otimes_K \C$. In the former case, $\mfh$ must be $\sl( K^{n/d})$ since they have the same dimension. In the latter, we may apply Lemma \ref{lem:quadform_descent} to show it has the form $\mathfrak{so}_Q( K^{n/d})$.

This handles the case where $w$ projects to $0$ in $\mfh_i$ for all but one $i$. In the remaining case, we may suppose that $w$ has nonzero component in $\mfh_1$ and $\mfh_2$.

The $\mfh_i$ are all isomorphic to $\sl_2 (\C)$, and they are permuted by the absolute Galois group of $\QQ$ once we choose an embedding of an algebraic closure of $\QQ$ in $\C$. Take $G$ to be the image of this absolute Galois group in $S_k$, take $H$ to be the subgroup of $G$ fixing $\{1, 2\}$, and take $H_0$ to be the subgroup of $H$ fixing $1$.  Since $V$ is irreducible, we have an isomorphism
\[V \otimes \C  \cong \bigoplus_{\sigma \in G/H} V_{\sigma(1)} \otimes V_{\sigma(2)},\]
where $V_i$ is the standard representation of $\mfh_i$ viewed as an $\mfh \otimes \C$ module.  For $w$ to not have rank greater than $2$, $\{\sigma(1), \sigma(2)\}$ cannot meet $\{1, 2\}$ unless $\sigma$ represents the identity in $G/H$, so $G/H$ has size $k/2$. Take $F$ to be the extension of $\QQ$ associated with $H$, so $[F:\QQ] = k/2$, and take $K$ to be the extension associated with $H_0$, so either $K = F$ or $K/F$ is quadratic. We find that $F$ must have a real embedding.

If $K/F$ is quadratic, the same argument as before shows that $\mfh$ has the structure of a Lie algebra over $K$. If $K = F$, we see that $\mfh$ is the sum of two simple ideals, and that both are Lie algebras over $F$. In either case, by considering the image of the subalgebra $\mfh \otimes_F K \subseteq \mfh \otimes_{\Q} K$ acting on $V \otimes_{\QQ} K$, we find that $V$ is a representation of $\mfh$ over $F$. There is then some number field $L/K$ so $(\mfh \otimes_F L, V \otimes_F L)$ may be identified with $(\so_Q (L^4), L^4)$ for some nondegenerate quadratic form $Q$, as this is true for $L = \C$.  The result now follows from Lemma \ref{lem:quadform_descent}.
\end{proof}

\section{The proof of Theorem \ref{thm:unRatnered}: handling the unipotent radical}
\label{sec:radical}
Having proved Theorem \ref{thm:semisimple}, it is straightforward to prove Theorem \ref{thm:unRatnered} in the case that $H(\R)^0$ is semisimple. Specifically, if $U$ does not preserve any nonzero rational quadratic form, then $H(\R)^0$ takes the form $\SL_k(K) \otimes \R$ for some $k \ge 1$ and some number field $K$. This is isomorphic to $\SL_k (\R)^{r_1} \oplus \SL_k (\C)^{r_2}$ for some nonnegative integers $r_1, r_2$ such that $r_1 + 2r_2$ equals the degree of $K$. If we take $Z$ to be the portion of $\R^{r_1} \oplus \C^{r_2}$ with at least one of its $r_1 + r_2$ coordinates zero, we see that $H(\R)^0$ is transitive on $\R^n \backslash Z$.

We now aim to prove this same result without the semisimplicity condition, where it will follow as a consequence of a classification result for the unipotent radical of $H(\R)$.

Take $U = \{u_t\,:\, t \in R\}$ to be a $1$-parameter unipotent subgroup of $\SL_n (\R)$ such that $u_1 - \text{Id}$ is nilpotent and either has rank $1$ or has rank $2$ with $(u_1 - \text{Id})^2$ nonzero. We will assume that $U$ preserves no nonzero rational quadratic form.

Take $H$ to be the minimal Zariski closed subgroup of $\SL_n(\Q)$ such that $H(\R)$ contains $U$, and take $\mfh$ to be the Lie algebra associated to $H$ over $\Q$. We take $w \in \mfh \otimes \R$ to be the nilpotent element such that $\exp(w) = u_1$. Then $\mfh$ may be characterized as the minimal rational Lie algebra such that $\mfh \otimes \R$ contains $w$.

We choose a Levi decomposition
\[\mfh = \mfs + \mfn\]
for $\mfh$, where $\mfn$ is the radical of $\mfh$ and $\mfs$ is semisimple. We note that the radical of $\mfh$ is nilpotent. Otherwise, we could define a nontrivial algebraic homomorphism $H(\C) \to \mathbb{G}_m^{\times}$, and this would contradict the minimality of $H$ since $U$ lies in the kernel of any such homomorphism.

Given a subspace $W$ of $\R^n$, we take $W^{\QQ}$ to be the least rational subspace $W'$ of $\Q^n$ such that $W' \otimes \R$ contains $W$, and we take $W_{\QQ}$ to be the greatest rational subspace contained in $W$. Since $U$ preserves no nonzero rational quadratic form, we have
\[\Q^n = \Img(w)^{\Q},\]
as any quadratic form on $\Q^n/\Img(w)^{\Q}$ corresponds to a quadratic form preserved by $U$.

Take $r \le 2$ to be the rank of $w$. We then may choose subspaces $V_1, \dots, V_4$ of $\Q^n$ such that
\begin{alignat*}{2}
&V_4 = \ker(w)_{\Q} \cap \Img(w^r)^{\Q} && V_3 \oplus V_4 = \Img(w^r)^{\Q} \\
 &V_2 \oplus V_3 \oplus V_4 =   \Img(w^r)^{\Q} + \ker(w^r)_{\Q} \qquad && V_1 \oplus V_2 \oplus V_3 \oplus V_4 =  \Q^n.
\end{alignat*}
For $i, j \le 4$, take $\mfO_{ij} = \Hom(V_i, V_j)$. We see that $w$ lies in
\[\left(\mfO_{11} \oplus \mfO_{33} \oplus \bigoplus_{i < j} \mfO_{ij}\right) \otimes \R.\]
For example, given $v$ in $\ker(w^r)_{\Q}$, we see that $wv$ must be $0$ if $r =1$. If $w$ has rank $2$, we instead note that $\ker(w) \cap \Img(w)$ is one-dimensional, and hence must be $\Img(w^2)$. In either case, we find that $w \ker(w^r)_{\Q}$ lies in $\Img(w^r)$, so $w$ maps $V_2$ into $V_3 \oplus V_4$. The other entries are similar.

Furthermore, we see that the image of $w$ in $\mfO_{11} \otimes \R$ is an endomorphism of $V_1 \otimes \R$ whose kernel has zero intersection with $V_1$. After all, given $v \in V_1$ in this kernel, we see that $\langle wv \rangle^{\Q} + \Img(w^2)^{\Q}$ cannot equal $\Img(w)^{\Q}$, so $wv$ must lie in $\Img(w^2)$, implying $v$ is in $\ker(w^2)_{\Q}$ and hence is $0$. So, if $V_1$ is nonzero, we find it must be an irreducible nontrivial $\mfh$ representation. A similar argument shows the same for $V_3$.

From these considerations, and since $\mfh$ is minimal among Lie algebras whose tensor product with $\R$ contains $w$, it follows that we may rechoose $V_1$ and $V_3$ so that
\begin{equation}
\label{eq:Levi_geography}
\mfs \subseteq \mfO_{11} \oplus \mfO_{33} \quad\text{and}\quad \mfn \subseteq \bigoplus_{i < j} \mfO_{ij}.
\end{equation}
We also note that an integral quadratic form $P$ on $\R^n$ is preserved by $U$ if and only if it is preserved by $H(\R)$.

The following proposition implies Theorem \ref{thm:unRatnered}.
\begin{prop}
\label{prop:all_AB}
Take all notation as above. Choose $a$ minimal and $b$ maximal so $V_a$ and $V_b$ are nonzero. Then, if $(a, b)$ is $(3, 4)$ or $(1, 4)$, then $\mfn$ contains $\mfO_{ab}$. Further, if $(a, b)$ is $(1, 3)$, then
\[(\mfn \cap \mfO_{13})v = V_3 \quad\text{for all }\, v\in V_1 \backslash 0.\]
\end{prop}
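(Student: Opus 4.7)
The plan is to carry out a case-by-case analysis, using the Levi decomposition $\mfh = \mfs + \mfn$, the classification result of Theorem \ref{thm:semisimple} applied to the action of $\mfs$ on $V_1$ and $V_3$, and the intrinsic definitions of the $V_i$ in terms of $w$. The common setup in each case is that $\mfn \cap \mfO_{ab}$ is a rational $\mfs$-subrepresentation of $\mfO_{ab}$; since $V_a^*$ (resp.\ $V_b^*$) is $\mfs$-irreducible when $V_a$ (resp.\ $V_b$) is nonzero, the possible submodules are tightly constrained. Combining this with the intrinsic equations satisfied by the $V_i$ then forces $\mfn \cap \mfO_{ab}$ to be as large as the proposition claims.

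The case $(a, b) = (3, 4)$ is cleanest. Here $V_1 = V_2 = 0$, so $\mfs \subseteq \mfO_{33}$ and $\mfn \subseteq \mfO_{34}$, with $V_4$ carrying the trivial $\mfs$-action. By Theorem \ref{thm:semisimple}, $V_3$ is an irreducible $\mfs$-module over some number field $K$, so any rational $\mfs$-submodule of $\mfO_{34} \cong V_4 \otimes V_3^*$ has the form $V_4' \otimes V_3^*$ for some rational subspace $V_4' \subseteq V_4$. Writing $\mfn = V_4' \otimes V_3^*$, the component $w_{34} \in \mfn \otimes \R$ has image contained in $V_4' \otimes \R$, and so $\Img(w) \subseteq (V_3 \oplus V_4') \otimes \R$. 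Passing to $w^r$, this gives $\Img(w^r)^{\Q} \subseteq V_3 \oplus V_4'$, which together with the defining identity $\Img(w^r)^{\Q} = V_3 \oplus V_4$ forces $V_4' = V_4$ and hence $\mfn = \mfO_{34}$.

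The case $(a, b) = (1, 4)$ follows the same template, but now $\mfn$ can have components in additional blocks $\mfO_{12}, \mfO_{13}, \mfO_{23}, \mfO_{24}, \mfO_{34}$. Writing $\mfn \cap \mfO_{14} = V_4' \otimes V_1^*$, I would track the contributions to the $V_4$ part of $\Img(w^r)$: these come from $w_{14}$ acting on $V_1$, from compositions such as $w_{24} \circ w_{12}$ and $w_{34} \circ w_{13}$ on $V_1$, and from $w_{24}, w_{34}$ acting on $V_2, V_3$. Each composition lands in a rational subspace determined by the analogous submodule data for its constituent blocks, and an inductive bookkeeping using $\Img(w^r)^{\Q} = V_3 \oplus V_4$ together with $V_2 \oplus V_3 \oplus V_4 = \Img(w^r)^{\Q} + \ker(w^r)_{\Q}$ forces $V_4' = V_4$.

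The case $(a, b) = (1, 3)$ with $V_4 = 0$ is subtlest. Both $V_1$ and $V_3$ are now irreducible $\mfs$-modules of the form prescribed by Theorem \ref{thm:semisimple}, realized as $(\sl(K^{m_i}), K^{m_i})$ or $(\so_Q(K^{m_i}), K^{m_i})$. I would analyze $\mfn \cap \mfO_{13}$ as an $\mfs$-submodule of $\mfO_{13} \cong V_3 \otimes V_1^*$ and use that $\mfs$ acts transitively on the nonzero vectors in $V_1$ (or on the non-isotropic vectors, in the $\so_Q$ case). Combined with the hypothesis that no nonzero rational quadratic form is preserved by $U$, this should force the evaluation map $X \mapsto Xv$ to surject $\mfn \cap \mfO_{13}$ onto $V_3$ for every $v \in V_1 \setminus 0$. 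The main obstacle will be this $(1,3)$ case: making the surjectivity rigorous requires carefully aligning the $\mfs$-module structures on $V_1$ and $V_3$, handling the possibility that both are of orthogonal type, and using the no-preserved-quadratic-form hypothesis to rule out the invariant bilinear pairings that would otherwise cut down the image of the evaluation map.
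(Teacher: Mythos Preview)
Your argument for $(a,b)=(3,4)$ is correct and matches the paper's reasoning (this is essentially the content of \eqref{eq:34_surj}).

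The remaining two cases have a genuine gap. Your ``inductive bookkeeping'' and ``aligning the $\mfs$-module structures'' do not engage with the actual obstruction. The paper needs a much finer classification of the pair $(\mfs,V_1,V_3)$ (Lemma~\ref{lem:ss13}) into seven cases, together with an auxiliary result (Lemma~\ref{lem:min13}) guaranteeing a specific irreducible summand inside $\Img(\mfn\to\mfO_{13})$. The difficult case is $(\mfs,V_1,V_3)\cong(\sl(K^m),K^m,(K^m)^*)$. Here, for $(a,b)=(1,4)$, one can have $\mfn\cap\mfO_{13}=0$ and $\Img(\mfn\to\mfO_{13})=\Hom_{K,\mathrm{alt}}(V_1,V_1^*)$ with $m=2$; the image of $w^r$ in $V_4$ then comes from a composition $w_{34}\circ w_{13}$ that does \emph{not} land in $\mfn\cap\mfO_{14}$ (since $w_{13}\notin\mfn$), so tracking $\Img(w^r)$ does not pin down $\mfn\cap\mfO_{14}$. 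The paper instead reduces to $K=\Q$, $\dim V_2=\dim V_4=1$, writes $\mfh$ explicitly as $6\times 6$ matrices, and computes a triple bracket $[x,[x,y]]$ to force $\mfn\cap\mfO_{14}\ne 0$.

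For $(a,b)=(1,3)$ in this same case, the argument runs in the opposite direction: assuming $(\mfn\cap\mfO_{13})v\ne V_3$ for some $v$, one must \emph{construct} a nonzero rational quadratic form preserved by $H$. This requires showing that $\Img(\mfn\to\mfO_{12}\oplus\mfO_{23})$ is the graph of a map $\Gamma$, that $\phi_{31}\circ\Gamma$ corresponds to a symmetric matrix with rational (not merely $K$-valued) entries, and then assembling $\phi=\phi_{31}-\phi_{22}+\phi_{31}^\top$. Your outline does not locate this construction, and your reference to transitivity on nonzero vectors does not suffice: the issue is precisely that the relevant $\mfs$-submodule of $\mfO_{13}$ can be the alternating piece $\Hom_{K,\mathrm{alt}}(V_1,V_1^*)$, which kills $v$ on evaluation. (Incidentally, your worry about ``both of orthogonal type'' cannot arise: the image of $w$ in $\mfO_{11}$ has rank at most~$1$ whenever $V_1\ne 0$, so by Theorem~\ref{thm:semisimple} only $V_3$ can carry an orthogonal structure, and then $V_1=V_2=0$.)
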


\begin{proof}[Proof of Theorem \ref{thm:unRatnered} assuming Proposition \ref{prop:all_AB}]
We prove a slightly stronger form of Theorem \ref{thm:unRatnered}, where we assume that $u_1 - \text{Id}$ is nilpotent and has rank at most $2$ and  $(u_1 - \text{Id})^2 \ne 0$ if its rank is exactly $2$.  

Take $a$ minimal so $V_a$ is nonzero. If $a = 2$, then \eqref{eq:Levi_geography} shows $\Q^n/V_3 \oplus V_4$ is a trivial $H$ representation, and hence that $H$ preserves a nonzero quadratic form on $\Q^n$, giving the theorem in this case. So we may assume $a = 1$ or $a =3$. 

We then see that the image of $H$ in $\text{GL}\left(V_a\right)$ takes the form $\SL_k (K)$ for some number field $K$ so long as $U$ preserves no nonzero rational quadratic form. As above, we find there is some finite union $Z_a$ of proper subspaces of $V_a \otimes \R$ such that $H(\R)^0$ acts transitively on $V_a \otimes \R\backslash Z_a$. Taking $Z$ to be the preimage of $Z_a$ in $\Q^n$, we claim that $H(\R)^0$ acts transitively on $\R^n \backslash Z$ so long as $U$ does not preserve any nonzero rational quadratic form.

Suppose this result is known in $\R^m$ for all $m < n$, and consider an example in $\R^n$. Choose $b$ maximal so $V_b$ is nontrivial. If $a = b = 3$, then the result follows for this example by Theorem \ref{thm:semisimple}. So we may assume that $(a, b)$ is in one of the three cases of Proposition \ref{prop:all_AB}.

By our assumptions, $H(\R)^0$ acts transitively on
\[\big((\Q^n/V_b) \otimes \R\big) \big\backslash Z/(V_b \otimes \R).\]
Furthermore, by Proposition \ref{prop:all_AB}, we find that every fiber in the map
\[ \R^n \backslash Z \xrightarrow{\quad} \big((\Q^n/V_b) \otimes \R\big) \big\backslash Z/(V_b \otimes \R) \]
consists of points from a single orbit under the action of $H(\R)^0$. Together, these imply the action of $H(\R)^0$ on $\R^n \backslash Z$ is transitive.
\end{proof}

\subsection{The $\mfs$ representations $V_1$ and $V_3$ }
\label{ssec:V1V3}
From Theorem \ref{thm:semisimple}, we have a good understanding of the structure of $\mfs$. We can use this to classify the possible forms of the $\mfs$ representations $V_1$ and $V_3$, which then gives some useful constraints on the form that $\mfn$ can take.
\begin{lem}
\label{lem:ss13}
The triple $(\mfs, V_1, V_3)$ is compatibly isomorphic to a triple of one of the following forms:
\begin{alignat*}{2}
&(1)\quad  (0, 0, 0) &&  (2)\quad  \left(\so_Q(K^m), \, 0,\, K^m\right) \\
&(3)\quad  \left(\sl(K^m),\, 0, \, K^m \right) \qquad && (4)\quad\left(\sl(K^m), \,K^m,\,0\right) \\
& (5)\quad  \left(\sl(K^m), \, K^m,\, (K^m)^*\right) && (6) \quad \left(\sl( K^m),\,K^m,\, K^m\right)\, \text{ with }\, m \ge 3 \\ 
& (7) \quad \left( \sl(K^m) \oplus \sl(L^p), \,K^m, \, L^p\right).\qquad &&
\end{alignat*}
Here, $K$ and $L$ are number fields, $m, p \ge 2$ are integers, $Q$ is a nondegenerate quadratic form on $K^m$, and the representations of the Lie algebras are the standard ones except on $(K^m)^*$, which is endowed with the negative transpose representation of $\sl(K^m)$.
\end{lem}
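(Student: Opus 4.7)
The plan is to classify $(\mfs, V_1, V_3)$ by analyzing the two projections $\mfs_i := \textup{image}(\mfs \to \mfO_{ii})$, applying Theorem~\ref{thm:semisimple} to each factor, and then using Goursat's lemma to determine how $\mfs$ sits inside $\mfs_1 \oplus \mfs_3$. For the setup, write $w = w_{\mfs} + w_{\mfn}$ via the Levi decomposition and further split $w_{\mfs} = w_1 + w_3$ with $w_i \in \mfs_i \otimes \R$. Because $\mfn$ is strictly block triangular with respect to the grading $V_1 \oplus V_2 \oplus V_3 \oplus V_4$, the matrix of $w$ has diagonal blocks $w_1, 0, w_3, 0$, so the nilpotence of $w$ forces $w_1, w_3$ to be nilpotent. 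Projecting the image of $w$ to $V_1 \oplus V_3$ yields a block triangular endomorphism with diagonal $w_1, w_3$ whose rank is at least $\textup{rank}(w_1) + \textup{rank}(w_3)$, giving
\[\textup{rank}(w_1) + \textup{rank}(w_3) \;\le\; \textup{rank}(w) \;\le\; 2.\]
The observation preceding the lemma (that $\ker w$ meets $V_1$ and $V_3$ trivially) moreover gives $w_i \ne 0$ whenever $V_i \ne 0$.

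If $V_1 = V_3 = 0$ then $\mfs = 0$, yielding case~(1). If $V_1 = 0$ and $V_3 \ne 0$, then $\mfs = \mfs_3$ acts faithfully on the $\mfs$-irreducible $V_3$, where irreducibility is inherited from the $\mfh$-irreducibility established before the lemma (since $\mfn$ acts trivially on $V_3$). Minimality of $\mfh$ transfers to minimality of $\mfs$ as a rational semisimple subalgebra of $\mathfrak{sl}(V_3)$ whose real tensor contains $w_3$: any strict decrease $\mfs' \subsetneq \mfs$ would together with $\mfn$ generate a proper rational subalgebra still containing $w$, contradicting the minimality of $\mfh$ via Levi uniqueness. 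Theorem~\ref{thm:semisimple} then yields cases~(2) and~(3). When instead $V_3 = 0$ and $V_1 \ne 0$, the same argument applies to $(\mfs, V_1, w_1)$ but rules out the $\so_Q$ branch: since $V_3 = 0$ forces $\Img(w^2) \subseteq V_4 \otimes \R$, which has trivial $V_1$-component, the $V_1$-projection of $w^2|_{V_1}$---namely $w_1^2$---vanishes, contradicting the requirement $w_1^2 \ne 0$ in the $\so_Q$ case of Theorem~\ref{thm:semisimple}. This leaves case~(4).

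If both $V_1, V_3$ are nonzero, the rank bound forces $\textup{rank}(w_1) = \textup{rank}(w_3) = 1$, so $w_1^2 = w_3^2 = 0$; Theorem~\ref{thm:semisimple} now forbids $\so_Q$ on either factor and gives $(\mfs_i, V_i) \cong (\sl(K_i^{m_i}), K_i^{m_i})$ for real-embeddable number fields $K_1, K_2$. Each $\sl(K^m)$ is simple as a rational Lie algebra---its complexification $\bigoplus_{\sigma: K \hookrightarrow \C} \sl_m(\C)$ has no proper Galois-invariant ideal---so Goursat's lemma applied to $\mfs \hookrightarrow \mfs_1 \oplus \mfs_3$ (surjective on each factor) yields two possibilities: either $\mfs = \mfs_1 \oplus \mfs_3$, giving case~(7), or $\mfs$ is a diagonal copy of $\mfs_1 \cong \mfs_3$, forcing $K_1 \cong K_2 =: K$ and $m_1 = m_2 =: m$. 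In the diagonal case, $V_1$ and $V_3$ are each $m$-dimensional irreducible $\sl(K^m)$-modules, hence each isomorphic to the standard representation $K^m$ or to its dual $(K^m)^*$; for $m = 2$ these coincide (giving case~(5)), while for $m \ge 3$ they are distinct, yielding case~(5) when $V_1 \not\cong V_3$ and case~(6) when $V_1 \cong V_3$. Galois twists of the standard representation do not produce further diagonals, as $K^m$ and its $\sigma$-conjugate are isomorphic as $\Q$-representations of $\sl(K^m)$.

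The most delicate step is the minimality reduction invoked above: transferring the minimality of $\mfh$ (as a rational Lie algebra containing $w$) to the minimality of each $\mfs_i$ (as a rational semisimple subalgebra of $\mfO_{ii}$ containing $w_i$). This requires careful tracking of how any hypothetical shrinkage of $\mfs$ interacts with $\mfn$ via $[\mfs, \mfn]$, together with the uniqueness of the Levi decomposition over $\Q$. A secondary technical point is verifying the $\Q$-simplicity of $\sl(K^m)$ cleanly, so that Goursat's lemma enumerates exactly the product and diagonal cases without introducing spurious Galois-twisted embeddings.
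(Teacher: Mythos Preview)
Your proposal is correct and follows essentially the same route as the paper: apply Theorem~\ref{thm:semisimple} to each projection $\mfs_i$, rule out the orthogonal case on $V_1$ (and on $V_3$ when $V_1\ne 0$) via a rank argument, and then use simplicity of $\sl(K^m)$ over $\Q$ together with Goursat to split into the product and diagonal cases.

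The one step where the paper is more careful is the diagonal case. You assert that $V_1$ and $V_3$, as irreducible $\sl(K^m)$-modules of the right dimension, must each be $K^m$ or $(K^m)^*$, handling field automorphisms with a one-line remark. The paper instead identifies $K$ intrinsically as the centroid of $\mfs$, so that $V_1$ and $V_3$ are automatically $K$-linear $\mfs$-modules; writing $\varphi_3 = \iota\circ\varphi_1$ for a $K$-automorphism $\iota$ of $\sl(K^m)$, it then uses Hilbert~90 to show that any inner automorphism of $\sl(K^m)\otimes_K\C$ preserving $\sl(K^m)$ is already inner over $K$, whence $\iota$ is inner or inner composed with negative transpose. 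Your Galois-twist remark is correct as far as it goes, but it presupposes the decomposition of $\Q$-automorphisms of $\sl(K^m)$ into field, inner, and transpose pieces, which is exactly what the centroid and Hilbert~90 argument supplies. The minimality transfer you flag as delicate is likewise left implicit in the paper's proof.
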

\begin{proof}
Given $i = 1, 3$, if $V_i$ is nontrivial, then the image of $\mfs$ in $\sl(V_i)$ is isomorphic to either $\sl(K^m)$ or $\so_Q (K^m)$ for some choice of $K$, $m \ge 2$, and $Q$, with the orthogonal case only possible if $w$ has rank $2$ as an endomorphism of $V_i \otimes \R$. We see that this rank is $1$ unless $\Img(w)^{\QQ} = \Img(w^2)^{\QQ}$, in which case $V_2 = V_1 = 0$. So, outside the case (2) listed above, we find that these images must be of the form $\sl( K^m)$. It is then clear that the triple is of the form (1), (2), (3), or (4) if either $V_1$ or $V_3$ is zero.

Now suppose $V_1$ and $V_3$ are both nonzero.  Take $\mfs_i$ to be the image of $\mfs$ in $\sl(V_i)$; this is a simple Lie algebra. The inclusion
\[\mfs \hookrightarrow \sl( V_1) \oplus \sl (V_3)\]
then is either a surjection onto $\mfs_1 \oplus \mfs_3$ or takes the form of a graph of an isomorphism of Lie algebras $\mfs_1 \isoarrow \mfs_3$. In the former case, the triple must be of the form (7).

Finally, in the last case, we have $\mfs \cong \sl( K^m)$ for some $m \ge 2$. Here, $K$ may be taken to be the center of the subring of $\text{End}_{\Q}\, \mfs$ generated by the image of $\mfs$ under the adjoint action. With this definition, $V_1$ and $V_3$ are representations of $\mfs$ over $K$.  Choose identifications $V_1, V_3 \cong K^m$, and take $\varphi_i: \mfs \to \sl( K^m)$ to be the representation associated to $V_i$. Then there is an automorphism $\iota$ of the Lie algebra $\sl( K^m)$ over $K$ such that $\varphi_3 = \iota \circ \varphi_1$. The automorphism $\iota \otimes_K \C$ of $\sl( K^m) \otimes_K \C$ is, up to inner automorphism, equal to either the identity map or the negative transpose map. An element of $\text{GL}_m (\C)$ is determined up to scalar multiple by its action on $\sl_m (\C)$, so we find from Hilbert 90 that every inner automorphism of $\sl( K^m) \otimes \C$ that preserves $\sl(K^m)$ is given by an inner automorphism of $\sl(K^m)$. So $V_1$ is isomorphic to either $V_3$ or $V_3^*$, and is isomorphic to both if $m = 2$. So we find that the triple must take the form (5) or (6) in these cases.
\end{proof}
We apply this lemma to study the structure of the representation $\mfO_{13}$, which is nontrivial in cases (5), (6), and (7). First suppose we are in case (7). Write 
\[\sl( K^m) \otimes_{\Q} \C = \mfs_1 \oplus \dots \oplus \mfs_d\quad\text{and}\quad \sl( L^p)\otimes_{\Q} \C = \mfs'_1\oplus \dots \oplus \mfs'_e,\]
where the $\mfs_i$ and $\mfs'_e$ are all simple, so $d = [K: \QQ]$ and $e = [L:\QQ]$. Taking $V_{1i} = \mfs_i V_1$ and $V_{3j} = \mfs'_j V_3$ for $i \le d$ and $j \le e$, $\mfO_{13} \otimes \C$ is a sum of representations of the form
\[\Hom(V_{1i}, V_{3j})\quad\text{with }\, i, j \le d.\]
These representations are nontrivial, nonisomorphic, and also not isomorphic to any $V_{1k}$, $V_{1k}^*$, $V_{3k}$, or $V_{3k}^*$.

The Galois action of $\Gal(\C/\Q)$ defines permutation actions on the components of $\mfs \otimes \C$. These give transitive permutation actions on $[d]$ and $[e]$. For each orbit $S$ of the Galois action on $[d] \times [e]$, there is a subrepresentation $W_S$ of $\mfO_{13}$ such that
\[W_S \otimes \C = \bigoplus_{(i, j)} \Hom(V_{1i}, V_{3j}).\]
These $W_S$ are distinct irreducible representations, and they are nontrivial and not isomorphic to $V_1$, $V_1^*$, $V_3$, or $V_3^*$.

In case (5) and (6), we decompose $\sl(K^m) \otimes_{\Q} \C$ as $\mfs_1 \oplus \dots \oplus \mfs_d$ as before, and we still take $V_{1i} = \mfs_i V_1$.

In case (6), we identify $\mfO_{13}$ with $\Hom_{\Q}(V_1, V_1)$. The scalar multiplications in this module give a copy of $K \otimes_{\Q} \C$ in $\mfO_{13} \otimes \C$. The remaining irreducible components in this representation take the form
\[\text{End}_0(V_{1i}) \quad\text{and}\quad \Hom(V_{1i}, V_{1j}), \quad i, j \le d,\,\,\, i \ne j.\]
These are again nonisomorphic, nontrivial, and distinct from any $V_{1i}$ or $V_{1i}^*$.

The sum of the $\text{End}_0(V_{1i})$ equals $\text{End}_{0, K}(V_1) \otimes_{\Q} \C$. With this and the copy of $K$ removed, the other irreducible components of $\mfO_{13}$ take the form $W_S$ for a Galois orbit $S$ of $[d] \times [d]$ not containing $(1, 1)$, where $W_S \otimes \C$ equals $\bigoplus_{(i, j) \in S} \Hom(V_i, V_j)$. These $W_S$ are distinct, nontrivial, and nonisomorphic to $V_1$ or $V_1^*$.

In case (5), we choose some isomorphism $V_1^* \to V_3$ and use it to define an identification of $\mfO_{13}$ with $\Hom(V_1, V_1^*)$, which is the sum of $\Hom_{\text{sym}}(V_1, V_1^*)$ and $\Hom_{\text{alt}}(V_1, V_1^*)$. Inside $\Hom_{\text{alt}}(V_1, V_1^*)$, we have the set of $K$-equivariant maps $\Hom_{K, \text{alt}}(V_1, V_1^*)$, which satisfies
\[\Hom_{K, \text{alt}}(V_1, V_1^*) \otimes \C = \bigoplus_{i \le d} \Hom_{\text{alt}}\left(V_{1i}, V_{1i}^*\right).\]
The representation $\Hom_{K, \text{alt}}(V_1, V_1^*)$ is a nontrivial representation unless $m  = 2$, where it is isomorphic to $K = \Q^d$. It is never isomorphic to $V_1$ or $V_1^*$.

The irreducible subrepresentations of $\Hom_{\text{sym}}(V_1, V_1^*) \otimes \C$ take the form
\[A_{ij} = \{x + x^{\top}\,:\,\, x \in \Hom(V_{1i}, V_{1j}^*)\}\quad\text{with }\, i \le j.\]
These are distinct nontrivial representations, and they are not isomorphic to any $V_{1k}$ or $V_{1k}^*$. From the Galois action, the irreducible representations of $\Hom_{\text{sym}}(V_1, V_1^*)$ take the form $W_S$ with
\[W_S \otimes \C = \bigoplus_{\substack{(i, j) \in S \\ i \le j}} A_{ij},\]
where $S$ takes the form $T \cup T^*$ with $T$ a Galois orbit on $[d] \times [d]$ and $T^*$ defined by $\{(j, i)\,:\,\, (i, j) \in S\}$. These representations are distinct, nontrivial, and not isomorphic to $V_1$, $V_1^*$, or $\Hom_{K, \text{alt}}(V_1, V_1^*)$.

\begin{lem}
\label{lem:min13}
Suppose we are in case (5), (6), or (7) of Lemma \ref{lem:ss13}. Then $\Img(\mfn \to \mfO_{13})$ contains   $\Hom_{K, \textup{alt}}(V_1, V_1^*)$ in case (5),  $\textup{End}_{0, K}(V_1)$ in case (6), and some $W_S$ in case (7).
\end{lem}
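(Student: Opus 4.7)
The plan is to exploit the minimality of $\mfh$. Since $\mfs \subseteq \mfO_{11} \oplus \mfO_{33}$ and $\mfn \subseteq \bigoplus_{i<j}\mfO_{ij}$, any $w \in \mfh \otimes \R$ decomposes as $w = w_{\mfs} + w_{\mfn}$ with $w_{\mfs} \in \mfs \otimes \R$ the $\mfO_{11} \oplus \mfO_{33}$-projection and $w_{\mfn} \in \mfn \otimes \R$ the remainder. Minimality then forces $\mfn$ to be the smallest rational $\mfs$-submodule of $\bigoplus_{i<j}\mfO_{ij}$ that is closed under the Lie bracket and whose $\R$-extension contains $w_{\mfn}$, since any smaller such $\mfn'$ would give a strictly smaller rational Lie subalgebra $\mfs + \mfn' \subsetneq \mfh$ whose $\R$-extension still contains $w$. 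In particular, writing $w_{13}$ for the $\mfO_{13}$-component of $w$, the image $\Img(\mfn \to \mfO_{13})$ contains the smallest rational $\mfs$-subrepresentation $W \subseteq \mfO_{13}$ with $W \otimes \R \ni w_{13}$.

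First I would show $w_{13} \ne 0$ in each of cases (5), (6), and (7). I would argue by contradiction: if $w_{13} = 0$, I would exhibit a strictly smaller rational Lie subalgebra $\mfh' \subsetneq \mfh$ whose $\R$-extension still contains $w$, by removing the $\mfO_{13}$-summand from $\mfn$ and, in cases (5) and (6), replacing $\mfs$ with the direct sum of its images in $\sl(V_1)$ and $\sl(V_3)$ so that the action becomes block-diagonal along $V_1 \oplus V_3$. Checking that $\mfh'$ is closed under the bracket reduces to the vanishing $[\mfO_{ij}, \mfO_{kl}] = 0$ when $j \ne k$ and $i \ne l$, so the Jacobi identity continues to hold.

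Given $w_{13} \ne 0$, the next step is to identify $W$ with the required irreducible piece. In case (7), $\mfO_{13}$ is a multiplicity-free direct sum of the $W_S$'s from Section \ref{ssec:V1V3}, none of which is trivial as an $\mfs$-rep, so any nonzero $w_{13}$ forces $W \supseteq W_S$ for some orbit $S$. In case (6), after identifying $\mfO_{13} \cong \mathrm{End}(V_1)$ via the chosen $\mfs$-isomorphism $V_3 \cong V_1$, I would decompose $w_{13}$ into its $K$-scalar and $K$-trace-free parts: a pure nonzero $K$-scalar $w_{13} = \lambda \cdot \mathrm{Id}$ would contribute rank $\dim_{\Q} V_1 = md$ to $w$, contradicting the rank-$\le 2$ hypothesis whenever $md \ge 3$, while the residual low-dimensional case is ruled out by direct inspection using $w^2 \ne 0$. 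The $K$-trace-free part is therefore nonzero and generates $\mathrm{End}_{0, K}(V_1)$ under the adjoint action of $\mfs$. Case (5) is analogous: identifying $\mfO_{13}$ with $\Q$-bilinear forms on $V_1$ via $V_3 = V_1^*$, one decomposes $w_{13}$ into $K$-symmetric and $K$-alternating parts and shows the $K$-alternating part must be nonzero using the same rank constraint together with the fact that a nontrivial $K$-symmetric contribution would have $\mfs$-orbit too large to fit inside a rank-$2$ image.

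The main obstacle is this last case analysis: in cases (5) and (6) one must rule out that $w_{13}$ lies entirely inside a \emph{wrong} irreducible subrepresentation of $\mfO_{13}$ and so bypasses the specified piece. This reduces to a careful comparison between the rank of $w$ and the dimensions of $\mfs$-orbits inside $\mfO_{13}$, in the spirit of Lemma \ref{lem:rank_bnd} but applied to the tensor representation $\Hom(V_1, V_3)$ rather than to $V$ itself.
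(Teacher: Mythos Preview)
Your argument has a genuine gap at the step where you claim $w_{13} \ne 0$.  The proposed construction of a strictly smaller $\mfh'$ does not work.  First, removing the $\mfO_{13}$-summand from $\mfn$ need not leave a Lie subalgebra, because $[\mfO_{12},\mfO_{23}]\subseteq\mfO_{13}$; if $\mfn$ has nonzero projections to both $\mfO_{12}$ and $\mfO_{23}$, closure under the bracket forces a nonzero $\mfO_{13}$-component back in.  Second, in cases (5) and (6) your fix of ``replacing $\mfs$ with the direct sum of its images in $\sl(V_1)$ and $\sl(V_3)$'' makes $\mfs$ \emph{larger}, not smaller, so the resulting $\mfh'$ is not contained in $\mfh$ and minimality of $\mfh$ yields no contradiction.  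In fact, in case (5) with $V_2\ne 0$ one genuinely can have $w_{13}=0$ while $w_{23}\circ w_{12}\ne 0$; the paper treats this situation explicitly.

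The paper's proof avoids this problem by working with $w^2$ rather than $w$: since $V_1,V_3\ne 0$, the $\mfO_{13}$-component of $w^2$ is nonzero, hence either $w_{23}\circ w_{12}\ne 0$ or $w_{13}\circ w_{11}+w_{33}\circ w_{13}\ne 0$.  In the first sub-case the required piece is produced by iterated brackets (e.g.\ $[[\mfs_1,w],w]$ in case (5)), with no reference to $w_{13}$ at all.  In the second sub-case one does get $w_{13}\ne 0$, but the argument pinning down \emph{which} irreducible summand it hits uses a fact your proposal does not invoke: by the analysis in Section~\ref{sec:semisimple}, the semisimple part $w_{\mfs}$ lives in a \emph{single} simple factor $\mfs_1$ of $\mfs\otimes\C$, so $w_{11}$ and $w_{33}$ act only on the block $V_{11}$ (resp.\ $V_{11}^*$).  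The relation $w_{13}\circ w_{11}+w_{33}\circ w_{13}\ne 0$ then forces $w_{13}$ to have nonzero component in $\Hom(V_{11},V_{11})$ (case (6)) or $\Hom_{\mathrm{alt}}(V_{11},V_{11}^*)$ (case (5)), from which the conclusion follows by Galois descent.  Your rank-based sketch for cases (5) and (6) does not capture this localization to $\mfs_1$, and as stated would not distinguish $\mathrm{End}_{0,K}(V_1)$ from the other nontrivial $W_S$ pieces of $\mfO_{13}$.
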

The proof of this lemma will use a simple observation that will recur later. Specifically, we have
\begin{equation}
\label{eq:34_surj}
\Img(\mfn \to \mfO_{12}) = \mfO_{12}\quad\text{and}\quad \Img(\mfn \to \mfO_{34}) = \mfO_{34}
\end{equation}
To prove the first of these, note that this image must take the form $\Hom(V_1, V_2')$ for some subspace $V_2'$ of $V_2$ since $V_1$ is irreducible as an $\mfs$ representation if it is nonzero. We have $\langle wv\rangle^{\Q} + \Img(w^r)^{\Q} = \Img(w)^{\Q}$ for all nonzero $v \in V_1$ since such $v$ cannot lie in the kernel of $w^2$, and this implies that $\mfn v$ has image $V_2$ in $V_2$, implying $V_2' = V_2$.

For the second identity, we note that $\Img(\mfn \to \mfO_{34})$ must take the form $\Hom(V_3, V_4')$ for some subspace $V_4'$ of $V_4$, and the claims follows since we assumed that $V_4$ is contained in $\Img(w^r)^{\Q}$.

\begin{proof}[Proof of Lemma \ref{lem:min13}]
Write the projection of $w$ to $\mfO_{ij} \otimes \C$ as $w_{ij}$. Since $w^2$ has nonzero component in $\mfO_{13} \otimes \C$, either $w_{23} \circ w_{12}$ is nonzero or $w_{13} \circ w_{11} + w_{33} \circ w_{13}$ is nonzero.

In case (5) and (6), write $\mfs \otimes \C$ in the form $\mfs_1 \oplus \dots \oplus \mfs_d$, and use this decomposition to define $V_{1i}$. From our discussion in Section \ref{ssec:Q}, we may assume that $w$ has nonzero component only in $\mfs_1$. Then $w_{11}$ lies in $\Hom(V_{11}, V_{11})$, with $w_{33}$ then lying in $\Hom(V_{11}^*, V_{11}^*)$ in case (5) and $\Hom(V_{11}, V_{11})$ in case (6). Further $w_{12}$ lies in $\Hom(V_{11}, V_2 \otimes \C)$, $w_{23}$ lies in $\Hom(V_{2} \otimes \C, V_{11}^*)$ or $\Hom(V_{2} \otimes \C, V_{11})$, and $w_{13}$ lies in
\[\Hom(V_{11}, V_1^* \otimes \C) + \Hom(V_{1} \otimes \C, V_{11}^*) \quad\text{or}\quad\Hom(V_{11}, V_1 \otimes \C) + \Hom(V_{1} \otimes \C, V_{11}).\]

First suppose $w_{23} \circ w_{12} \ne 0$ in any of the three cases. If  the map from
\[\Img(\mfn \to \mfO_{12} \oplus \mfO_{23})\]
to $\mfO_{12}$ is not injective, then \eqref{eq:34_surj} and the closure of $\mfn$ under Lie brackets implies that $\mfn$ surjects onto $\mfO_{13}$. So we may assume this is the graph of some homomorphism $\kappa: \mfO_{12} \to \mfO_{23}$. In cases (6) and (7), this homomorphism must be zero, so we must have $w_{23} \circ w_{12} = 0$ in these cases.

So suppose we are in case (5).  There is then nonzero $x\in V_{11}^*$, $v_2 \in V_2 \otimes \C$, and $v_2'\in V_2' \otimes \C$ so that $w_{12} = v_2 \otimes x$ and $w_{23} = x \otimes v_2'$, where $v_2'(v_2) \ne 0$. The Lie brackets in $[[\mfs_1, w], w]$ then generate $\Hom_{\text{alt}}(V_{11}, V_{11}^*)$, so $\mfn$ contains $\Hom_{K, \text{alt}}(V_1, V_1^*)$.

So we now may assume $w_{13} \circ w_{11} + w_{33} \circ w_{13}$ is nonzero. This gives the result immediately in case (7), since $\Img(\mfn \to \mfO_{13})$ is nonzero.

In case (5) and (6), we know from our discussion in Section \ref{ssec:Q} that $w_{11}$ lies in $\Hom(V_{11}, V_{11})$, with $w_{33}$ then lying in $\Hom(V_{11}^*, V_{11}^*)$ in case (5) and $\Hom(V_{11}, V_{11})$ in case (6). From our assumption, we find that $w_{13}$ has nonzero component in $\Hom(V_{11}, V^*_{11})$ in case (5) and $\Hom(V_{11}, V_{11})$ in case (6). For case (6), we see since $m\ge 3$ that the component of $w_{13}$ in $\Hom(V_{11}, V_{11})$ cannot lie in the center, and the result follows in this case.

Finally, in case (5), the assumption $w_{13} \circ w_{11} + w_{33} \circ w_{13} \ne 0$ implies that $w_{13}$ projects nontrivially to $\Hom_{\text{alt}}(V_{11}, V_{11}^*)$. It also projects trivially to every other $\Hom_{\text{alt}}(V_{1i}, V_{1i}^*)$. Since $\mfn \otimes \C$ is closed under the Galois action, the result follows.

\end{proof}

\subsection{The proof of Proposition \ref{prop:all_AB}}
We may assume without loss of generality that  $\dim V_4 \le 1$. To see this, suppose $V_4$ has dimension at least $2$. Since $V_a$ is an irreducible $\mfs$ representation, we see that $\mfn \cap \mfO_{a4}$ is $\Hom(V_a, V_4')$ for some subspace $V_4'$ of $V_4$. If $V_4' = V_4$, the result follows. Otherwise, we note that the result follows if it holds for all spaces of the form $V/V_4''$, where $V_4''$ is a codimension $1$ subspace of $V_4$ not contained in $V_4'$. This gives the reduction.

By considering $\mfn$ as an $\mfs$ representation, and in particular considering the irreducible components of $\mfO_{13}$ using our work in Section \ref{ssec:V1V3} above, we have a decomposition
\begin{align}
\label{eq:checkerboard}
\mfn \,=\,& \Img\left(\mfn \to \mfO_{12} \oplus  \mfO_{14} \oplus \mfO_{23}\oplus \mfO_{34}\right)  \oplus \Img\left(\mfn \to  \mfO_{13} \oplus \mfO_{24} \right).
\end{align}

We now divide our proof based on the cases of $(\mfs, V_1, V_3)$ in Lemma \ref{lem:ss13}.

\subsubsection{Cases (1) and (2)}
In either of these cases, the conditions of the proposition are not met. In case (1), we see that $a$ cannot be either $1$ or $3$. In case (2), we instead find that $H$ preserves a nonzero rational quadratic form. Indeed, if $\mfs$ is isomorphic to $\mathfrak{so}_Q(K^m)$, then $\text{tr}\circ (c\cdot Q)$ is preserved by $H$ for any $c$ in $K$.

\subsubsection{Case (3)}
In this case, we may assume $a = 3$ and $b = 4$, so  $\mfn$ is a subspace of $\mfO_{34}$. The result follows from \eqref{eq:34_surj}.

\subsubsection{Case (4)}
In this case, we may assume $a = 1$ and $b = 4$. By the definition of $V_2$ and $V_4$, we see that $\Img(\mfn  \to \mfO_{24})$ is nonzero if $V_2$ is nonzero. By \eqref{eq:checkerboard} and \eqref{eq:34_surj}, we then find that $[\mfn, \mfn]$ contains $\mfO_{14}$ if $V_2$ is nonzero. If $V_2$ is $0$, then $\mfn$ is contained in $\mfO_{14}$, and the result follows since $V_4$ equals $\Img(w^2)^{\Q}$.

\subsubsection{Case (5)}
We now move on to the most difficult case. In the remaining cases, $a$ is $1$, and $b$ is either $3$ or $4$. We handle the case where $b = 4$ first. 

 If $\mfn \cap \mfO_{13}$ is nonzero, then \eqref{eq:34_surj} implies that $[\mfn, \mfn \cap \mfO_{13}]$ contains $\mfO_{14}$. So we may assume that this intersection is $0$. By Lemma \ref{lem:min13},  $\Img(\mfn \to \mfO_{13})$ equals $\Hom_{K, \text{alt}}(V_1, V_1^*)$, and $m = 2$.

Given $m$ in $\mfn$, we denote the image of $m$ in $\mfO_{ij}$ by $s_{ij}(m)$. 

From \eqref{eq:checkerboard} and \eqref{eq:34_surj}, we now find that
\[\big[ \mfn \cap (\mfO_{13} \oplus \mfO_{24}),\, \mfn \cap  (\mfO_{12} \oplus \mfO_{14} \oplus \mfO_{23} \oplus \mfO_{34})\big]\]
contains $\mfO_{14}$ unless there is an isomorphism $\Gamma: \mfO_{12} \to \mfO_{34}$ of $\mfs$ representations and an injection $\iota: \Hom_{K, \text{alt}}(V_1, V_1^*)  \hookrightarrow \mfO_{24}$ such that, for $x \in \mfn$, we have
\[s_{34}(x) = \Gamma(s_{12}(x)) \quad\text{and}\quad \iota(s_{13}(x)) = s_{24}(x),\]
with
\[s_{34}(x) \circ s_{13}(x) - s_{24}(x) \circ s_{12}(x) = 0.\]
This is possible only if $V_2$ is one-dimensional since $V_4$ is one-dimensional. This in turn allows us to assume that $K = \Q$.

Since $s_{24}$ is nonzero, we see that no nonzero element in $V_2$ is in $\ker(w)$, so $\Img(\mfn \to \mfO_{23}) = \mfO_{23}$. Since $\Img([\mfn, \mfn] \to \mfO_{13})$ is contained in $\Hom_{\text{alt}}(V_1, V_1^*)$, we find that $\Img(\mfn \to \mfO_{12} \oplus \mfO_{23})$ is the graph of an isomorphism $\mfO_{12} \to \mfO_{23}$. We similarly find that $\Img(\mfn \to \mfO_{23} \oplus \mfO_{34})$ is the graph of some isomorphism.

After adjusting the bases for $V_1$ and $V_3$ if necessary, we find that there is some nonzero $k_1, k_2, k_3$ such that every $x$ in $\mfh$ takes the form
\[\left(\begin{matrix}
0 & -k_1e & k_1d & k_3f & - & - \\
0 & a & b & d & 0 & f \\
0 & c & -a & e & -f & 0\\
0 & 0 & 0 & 0 & k_2d & k_2 e \\
0 & 0 & 0 & 0 & -a & - c \\
0 & 0 & 0 & 0 & -b &  a 
\end{matrix}\right)\]
for some rational numbers $a, b, c, d, e, f$; and we find that, given any value of these rational numbers, some element in $\mfh$ takes this form. Choose $x$ in $\mfn$ with $d = 1$ and $e = f = 0$, and choose $y$ in $\mfn$ with $e = 1$ and $d = f = 0$. Then
\[[x, [x, y]] =  \left(\begin{matrix}
0 & 0 & 0 & 0 & -3k_1k_2 & 0 \\
0 & 0 & 0 & 0 & 0 & 0 \\
0 & 0 & 0 & 0 & 0 & 0\\
0 & 0 & 0 & 0 & 0 & 0 \\
0 & 0 & 0 & 0 & 0 & 0 \\
0 & 0 & 0 & 0 & 0 &  0 
\end{matrix}\right),\]
so $\mfn \cap \mfO_{14}$ is nontrivial, giving the case if $b = 4$.

So suppose $b = 3$. Choosing some nonzero $v_1 \in V_1$, we suppose $V = (\mfO_{13} \cap \mfn)v_1$ is a proper subspace of $V_3$. In this case, we wish to prove $H$ preserves some nonzero rational quadratic form. Choose a nontrivial homomorphism $y: V_3/V \to \Q$.

 We may identify $V_3$ with either $\Hom_K(V_1, K)$ and $\Hom_{\Q}(V_1, \Q)$. Under the former identification, we know that $V$ contains all $v' \in \Hom_K(V_1, K)$ such that $v'(v) = 0$ by Lemma \ref{lem:min13}, so $y$ takes the form 
\[v' \mapsto \text{tr}_{K/\Q}(c v'(v))\]
 for some $c$ in $K$. So we may choose some isomorphism 
\[\phi_{31}: V_3 \to V_1^* = \Hom_{\Q}(V_1, \Q)\]
 such that $y(v_3) = \phi_{31}(v_3)(v_1)$ for all $v_3$ in $V_3$.

Applying $\phi_{31}$ to $\Img(\mfn \to \mfO_{13})$ gives a subspace of $\Hom(V_1, V_1^*)$. If this subspace contains any subrepresentation of $\Hom_{\text{sym}}(V_1, V_1^*)$, then our discussion before Lemma \ref{lem:min13} shows that $y$ must attain arbitrary rational values on $V$. So we find that it lies in $\Hom_{\text{alt}}(V_1, V_1^*)$.

So
\[\phi_{31} \circ s_{23}(x) \circ s_{12}(y) - \phi_{31} \circ s_{23}(y) \circ s_{12}(x) \in \Hom_{\text{alt}}(V_1, V_1^*)\]
for all $x, y$ in $\mfn$. This and \eqref{eq:34_surj} imply that $\Img(\mfn \to \mfO_{12} \oplus \mfO_{23})$ is the graph of some isomorphism $\Gamma: \mfO_{12} \to \mfO_{23}$. 

Choose a basis $e_1, \dots, e_k$ for $V_2$. We may view $\phi_{31} \circ \Gamma$ as a matrix $M$ with coefficients in $K$. This matrix satisfies
\begin{equation}
\label{eq:alt135}
(Mx)  \circ y  - (My) \circ x  \in \Hom_{\text{alt}}(V_1, V_1^*)\quad\text{for }\, x, y \in \mfO_{12}.
\end{equation}

If we take $x$ in $\Hom(V_1, \langle e_1 \rangle)$ and $y$ in $\Hom(V_2, \langle e_2 \rangle)$, we find from \eqref{eq:alt135} that  the coefficients $M_{12}$ and $M_{21}$ are equal. By repeating this argument, we find that the matrix is symmetric.

Take $c = M_{11}$. Then, taking $x$ in $\Hom(V_1, \langle e_1 \rangle)$ and $y = cx$, we have
\[(Mx) \circ cx - (Mcx) \circ x =  c^2x \otimes_{\Q} x - cx \otimes_{\Q} cx.\]
Since this needs to be alternating, we find that $c$ lies in $\Q$. Repeating this argument for other choices of bases, we find that $\phi_{31} \circ \Gamma$ corresponds to a matrix with rational entries. 

In other words, there is a symmetric map $\phi_{22}: V_2 \to V_2^*$ such that $\Gamma$ is 
\[s_{23}(x)^{\top} \circ \phi_{31}^{\top}  =\phi_{22} \circ  s_{12}(x) \quad\text{for all } x \in \mfn.\]
But now we can calculate that 
\[\phi := \phi_{31} - \phi_{22} + \phi_{31}^{\top}: \Q^n \to \left(\Q^n\right)^*\]
corresponds to a nonzero rational quadratic form preserved by $H$, as
\[\phi(hv)(v) = 0 \quad\text{for all } h \in \mfh \text{ and }\, v \in \Q^n.\]
This finishes the case.

\subsubsection{Cases (6) and (7)}
In these cases, $(\mfn \cap \mfO_{13})v_1 = V_3$ for all $v_1 \in V_1 \backslash 0$ by Lemma \ref{lem:min13}, as $v_1$ projects nontrivially to all $V_{1i}$ as defined in that lemma. This gives the result if $b = 3$. If $b = 4$, this lemma, \eqref{eq:34_surj}, and \eqref{eq:checkerboard} imply that
\[[\mfn \cap \mfO_{13}, \,\mfn] \supseteq \mfO_{14},\]
giving the result in this case. These were the last cases to consider. \qed

\bibliography{references}{}
\bibliographystyle{amsplain}

\end{document}